\newcommand {\norm}[1] {\left\| #1 \right\|}
\newcommand{\vertiii}[1]{{\left\vert\kern-0.15ex\left\vert\kern-0.15ex\left\vert #1
		\right\vert\kern-0.15ex\right\vert\kern-0.15ex\right\vert}}
\numberwithin{equation}{section}
\newtheorem{theorem}{{\bf Theorem}}[section]
\theoremstyle{definition} \newtheorem{definition}[theorem]{\bf Definition}
\theoremstyle{plain} \newtheorem{lemma}[theorem]{Lemma}
\DeclareMathOperator*{\esssup}{{ess\,sup}}
\newcommand{\be}{\begin{eqnarray*}}
	\newcommand{\en}{\end{eqnarray*}}
\newcommand{\bes}{\begin{eqnarray}}
\newcommand{\ens}{\end{eqnarray}}
\def\nn{\nonumber}
\def\bq{\begin{equation}}
\def\eq{\end{equation}}
\def\bqq{\begin{eqnarray*}}
	\def\eqq{\end{eqnarray*}}
\begin{document}
			\title[On existence and  regularity of a terminal]{ On existence and  regularity of a terminal value problem for the time fractional diffusion equation }         
		\author[N.H. Tuan]{Nguyen Huy Tuan}
		\address[N.H. Tuan]{Applied Analysis Research Group,
			Faculty of Mathematics and Statistics,  Ton Duc Thang University, Ho Chi Minh City, Vietnam}
		\email{nguyenhuytuan@tdt.edu.vn}
		
		\author[T.B. Ngoc]{Tran Bao Ngoc}
		\address{  Department of Mathematical Economics Banking University of Ho Chi Minh City Ho Chi Minh City Vietnam}%
		\email{tr.bao.ngoc@gmail.com }
		
		\author[Y. Zhou] {Yong Zhou}
		\address{  Faculty of Mathematics and Computational Science, Xiangtan University, Hunan 411105, China}%
		\email{yzhou@xtu.edu.cn  }
		
		\author[D. O'Regan] {Donal O'Regan}
		\address{  School of Mathematics, Statistics and Applied Mathematics, National University of
			Ireland, Galway, Ireland }%
		\email{donal.oregan@nuigalway.ie  }

			\maketitle

\begin{abstract}		
 In this paper we consider a final value problem for a diffusion equation with time-space fractional differentiation on a bounded domain $D$ of $ \mathbb{R}^{k}$, $k\ge 1$, which includes the fractional power $\mathcal L^\beta$, $0<\beta\le 1$, of a symmetric uniformly elliptic operator $\mathcal L$ defined on $L^2(D)$. A representation of solutions is given by using the Laplace transform and the spectrum of $\mathcal L^\beta$. We establish some existence and regularity results for our problem in both the  linear and nonlinear case.\\
	\noindent{\it Keywords:}
	 \small Existence; Uniqueness; Regularity; Final value problem; time fractional derivative\\
	  {\it MSC:}  68Q25, 68R10, 68U05.
	\end{abstract}
	\tableofcontents

\section{Introduction}
The nonlinear diffusion equations, an important class of parabolic equations, come from many diffuse phenomena that appear widely in nature. They are proposed as mathematical models of physical problems in many areas, such as filtering, phase transition, biochemistry and dynamics of biological groups. Many new ideas and methods have been developed to consider some various kinds of nonlinear diffusion equation. We can list some selected and impressive works in recent time, for example L. Caffarelli et al \cite{Ca},  F. Duzaar et al  \cite{Du,Du1,Du2}, J.L.  Vazquez et al \cite{Va,Te,Va3,Va4}  and the references therein.

We present existence and regularity estimates for
the solution to a final  boundary value problem for a { space-time} fractional diffusion
equation.  Let $D$ be an open and bounded domain in $\mathbb{R}^k, (k \ge 1)$  with boundary $\partial D$. Given $0<\alpha<1$ and $0 <\beta \le 1$, a forcing (or source) function $F$,  we consider the  final value problem for the time fractional diffusion equation
\begin{align}
& ^c{\hspace*{-0.05cm}}D_t^\alpha u(t,x) = -  \mathcal L^{\beta }  u(t,x) + F(t,x,u(t,x)),    \quad (t,x)\in J\times D ,    \label{mainpro1}
\end{align}
with the boundary condition
\begin{align}
\mathcal H  u(t,x) = 0, \quad (t,x) \in  J\times \partial
D , \label{mainpro2}
\end{align}
and the final condition
\begin{align}
u(x,T) = \varphi(x), \quad x \in   D, \label{mainpro3}
\end{align}
where $\varphi$ is a given function.  Here  $J$ is the interval $(0,T)$,    The notation $^c{\hspace*{-0.05cm}}D_t^\alpha$ for $0<\alpha<1$ represents the left Caputo fractional derivative of order $\alpha$ which is defined by
\begin{align}
^c{\hspace*{-0.05cm}}D_t^\alpha v(t):= I_t^{1-\alpha}D_tv(t), \quad t\ge 0, \nn
\end{align}
provided that $I^\alpha_tv(t):=g_\alpha(t)\star v(t)$, here $g_\alpha(t)=\frac{1}{\Gamma(\alpha)}t^{\alpha-1}$, $t>0$, $\star$ denotes the convolution. For $\alpha=1$, we consider the usual time derivative $\frac{\partial u}{\partial t}$.    The fractional power $\mathcal L^{\beta}$ $0< \beta\le 1$  of the Laplacian operator $\mathcal L$ on $D$ is defined by its spectrum.  The symmetric uniformly elliptic operator is defined on the space $L^2(D)$ by
\begin{align*}
\mathcal L u(x) =  - \sum_{i=1}^k \frac{\partial }{\partial x_i} \left( \sum_{j=1}^k \mathcal L _{ij}(x) \frac{\partial}{\partial x_j}u(x) \right) + b(x)u(x),
\end{align*} provided that $\mathcal L _{ij}\in C^1\left(\overline{\Omega}\right)$, $b\in C\left(\overline{\Omega}\right)$, $b(x)\ge 0$ for all $x\in \overline{\Omega}$, $\displaystyle \mathcal L _{ij}=\mathcal L _{ji}, 1\le i,j\le k$, and $\displaystyle \xi^T \left[\mathcal L _{ij}(x) \right] \xi \ge L_0 |\xi|^2$ for some $L_0>0$, $x\in \overline{\Omega}$, $\xi=(\xi_1,\xi_2,...,\xi_k)\in \mathbb{R}^k$.    The equation  (\ref{mainpro1}) is equipped
with $ \,
\mathcal Hv=v \, \textrm{ or } \, \mathcal Hv = \frac{\partial v}{\partial n} + \kappa v$, $\kappa >0$, $n$ is the outer normal vector of $\partial D$.

\vspace*{0.1 cm}

The time fractional reaction diffusion equation arises in describing   "memory"  occurring in physics such
as plasma turbulence \cite{4}.
It
was introduced by Nigmatullin \cite{Ni} to describe diffusion in media
with fractal geometry, which is a special type of porous media and is applied in the flow in highly heterogeneous aquifer \cite{Be} and
single-molecular protein dynamics \cite{Kou}.
In a physical model presented
in \cite{32}, the fractional diffusion corresponds to a diverging jump length variance in the
random walk, and a fractional time derivative arises when the characteristic waiting
time diverges.

\vspace*{0.1 cm}

If the final condition \eqref{mainpro3}  is replaced by the initial condition
\begin{align} \label{mainpro33}
u(x,0)= u_0(x),~~x  \in D
\end{align}
then {Problem \eqref{mainpro1}, \eqref{mainpro2}, \eqref{mainpro33} is} called a forward problem (or an initial value { problem)} for time-space  fractional diffusion equations; for applications of this type of  equation see  \cite{Gal} and for the abstract form of  \eqref{mainpro1}-\eqref{mainpro33}   see \cite{Clement}.  Carvaho et  al \cite{Andrade}   established a local theory of mild solutions for Problem \eqref{mainpro1}-\eqref{mainpro33}  where $\mathcal L^{\beta }$ is
a sectorial (nonpositive) operator.  	B.H. Guswanto \cite{Gu} studied the existence and uniqueness of a local
mild solution for a class of initial value problems for nonlinear fractional evolution equations and the study of  existence of initial value problems was considered  by M. Warma et al  \cite{Gal}. A significant number of papers has been devoted to extend properties holding in the standard setting to the fractional one (see for example \cite{Dong,Ga1,Kim,Li,Taylor}).

Numerical approximation for solutions for Problem \eqref{mainpro1}-\eqref{mainpro33} was studied by B. Jin et al \cite{jin,jin2} and for other works on
fractional diffusion see  \cite{Liu,Zhou,Nochetto,Salgado}.
However, the literature on regularity of the initial value problem  for fractional diffusion-wave equations is  scarce;  for the linear case see \cite{Mclean2,Dang,Yamamoto1,
	Nochetto}, and for the nonlinear case see \cite{Gal,warma,Mu,Yamamoto2}. Although there are many works on direct problem, but the results on  inverse problem for fractional diffusion are scarce. We can list some papers of M. Yamamoto and his group see \cite{Ya5,Ya6,Ya7,Ya8,Ya9,Ya10}, of B. Kaltenbacher et al \cite{Bar,Bar1}  , of W. Rundell et al \cite{Run,Run1},  of J. Janno see \cite{Ja1,Ja2}, etc.

\vspace*{0.1 cm}

In practice, initial data of some problems may not be known since many phenomena cannot be measured at the initial time.  Phenomena can be observed at a final time $t=T$, such as, in the image processing area. A picture is not processed at the capturing time $t=0$. Instead, one wishes to recover the original information of the picture from its blurry form. Hence,  inverse problems or  terminal value problems or final value problems (IPs/FVPs), i.e., the fractional differential equations (FDEs) equipped with  final value data, have been considered. IPs/FVPs  are  important in engineering in detecting the previous status of physical fields from its present information.
If $F=0$ in \eqref{mainpro1},   Yamamoto et al \cite{Yamamoto1} showed that Problem  \eqref{mainpro1}-\eqref{mainpro3}  has a unique weak solution when  $\varphi \in H^2(\Omega)$.  If $b=0$ and $F(u(x,t))=F(x,t)$, Tuan et al \cite{Tuan} showed that Problem  \eqref{mainpro1}-\eqref{mainpro3}  has a unique weak solution when  $\varphi \in H^2(\Omega)$ and $F \in L^\infty (0,T; H^2(\Omega))$,  and other works on the homogeneous case for
Problem  \eqref{mainpro1}-\eqref{mainpro3} can be found in \cite{Liu,Wei,Jia,Tuan}.  When $\alpha=1$,  systems \eqref{mainpro1}-\eqref{mainpro3} are reduced to the backward problem for classical
reaction diffusion equations, and were studied in \cite{Au,Hao,Tuan1}.

To the best of the authors' knowledge this is the first paper that analyzes problem
(\ref{mainpro1})-(\ref{mainpro3}). We present existence and uniqueness results and
derive regularity estimates both in time and space.  In what follows, we analyze the difficulties of this problem.   By letting $u(t,.)=\mathcal O(t)(f,\varphi)$, the solution operator $\mathcal O(0)$ is really not bounded in $L^2(D)$. Hence continuity of mild solutions does not hold at the initial time $t=0$. In addition, since the fractional derivative is non-locally defined, if we put $v(t)=u(T-t)$ then  $\left.^c{\hspace*{-0.05cm}}D_s^\alpha u(s)\right|_{s=T-t}$ does not equal $^c{\hspace*{-0.05cm}}D_t^\alpha v(t)$, so the problem cannot be changed to an initial value problem.  As a result  we need new techniques to deal with the FVP (\ref{mainpro1})-(\ref{mainpro3}). To the best of our knowledge, the work on the final value problem is still limited.

Our  main results in this paper can be split into  two parts,   linear and nonlinear  source functions.
Linear models are sometimes good approximations of the real problems under consideration and provides  mathematical tools needed to study nonlinear phenomena,
especially for semi-linear and quasi-linear equations.  In part 1, we consider the regularity property of the solution in the linear case $F$.
We seek to address
the following question: If the data is regular, how regular is the solution?
Our task in this part is to  find  a suitable Banach space for the given data $(\varphi,F)$ in order to obtain  regularity results for the corresponding solution.
In part 2, we  discuss existence, uniqueness and regularity for the solutions to (\ref{mainpro1})-(\ref{mainpro3}) for the  nonlinear problem.  Our main motivation for deriving regularity results is that one needs it for a  rigorous study of a numerical scheme to approximate the solution.   To the best of our knowledge, regularity results on inverse initial value problems (final value problems) for fractional diffusion  is still unavailable in the literature. For initial value problems, W. McLean et al \cite{Mclean2}, B. Jin et al \cite{jin} and B. Ahmad et al \cite{Mu}  considered existence and regularity results of the solution in $C([0,T]; L^2(D))$. However it seems  the techniques in \cite{jin,Mu} cannot be applied for our problems (it is impossible to apply  some well-known  fixed point theorems with some  spaces  in  \cite{jin} for establishing unique solutions).
To overcome this we need data $\varphi$ in a suitable space and we will use a Picard iteration argument  and  then  develop some  new techniques  to obtain existence and regularity  of the solution.

The rest of this paper is organized as follows.  In section 2, we give  basic notations and preliminaries, and we propose a mild solution of our problem. In section 3, we give some regularity results of the linear inhomogeneous problem. Section 4 is devoted to existence and regularity for nonlinear problems.
\section{Notations and preliminaries}
\subsection{Functional space}
In this subsection, we introduce some functional spaces for solutions of FVP (\ref{mainpro1})-(\ref{mainpro3}). By $\{m_j\}_{j\ge 1}$ and $\{e_j(x)\}_{j\ge 1}$, we denote the spectrum and sequence of eigenfunctions of $\mathcal L$ which satisfy  $e_j\in \{v\in H^2(D): \mathcal Hv=0\}$, $\mathcal L e_j(x)=m_je_j(x)$, $0<m_1\le m_2\le ...\le m_j\le ...$, and $\lim\limits_{j\to \infty} m_j=\infty$. The sequence $\{e_j(x)\}_{j\ge 1}$ forms an orthonormal basic of the space $L^2(D)$.
For a given real number $p \ge  0$, the Hilbert scale space $H^{2p}(D)$ is defined by
\begin{align}
\left\{ v\in L^2(D) \quad \textrm{such that} \quad \norm{v}_{H^{2p}(D)}^2:= \sum_{j=1}^\infty (v,e_j)^2m_j^{2p}<\infty \right\}, \nn
\end{align}
where $(.,.)$ is the usual inner product of $L^2(D)$.
The fractional power $\mathcal L^{\beta}$, $\beta \ge 0$,  of the Laplacian operator $\mathcal L$ on $D$ is defined by
\begin{align}
\mathcal L^\beta v(x):= \sum_{j=1}^\infty (v,e_j) m_j^{\beta}e_j(x). \label{Lbeta}
\end{align}
Then, $\{m_j^\beta\}_{j\ge 1}$ is the spectrum of the operator $\mathcal L^\beta$. We denote by ${\bf V}_\beta$ the domain of $\mathcal L^\beta$, and then 
$${\bf V}_\beta=\{v\in L^2(D): \norm{\mathcal L^\beta v}<\infty\}$$ 
where $\norm{.}$ is the usual norm of $L^2(D)$, and ${\bf V}_\beta$ is a Banach space with respect to the norm $\norm{v}_{{\bf V}_\beta}=\norm{\mathcal L^\beta v}$. Moreover, the inclusion ${\bf V}_\beta \subset H^{2\beta}(D)$ holds for $\beta >0$. We identity the dual space $\left(L^2(D)\right)'=L^2(D)$ and define the domain ${\bf V}_{-\beta}:=D(\mathcal L^{-\beta})$ by the dual space of ${\bf V}_\beta$, i.e., ${\bf V}_{-\beta} = \left({\bf V}_{\beta}\right)'$. Then, ${\bf V}_{-\beta}$  is a Hilbert space endowed with the norm $$\norm{v}_{{\bf V}_{-\beta}} :=\left\{ \sum_{j=1}^\infty (v,e_j)_{-\beta,\beta}^2 m_j^{-2\beta} \right\}^{1/2}, $$ where $(.,.)_{-\beta,\beta}$ denotes the dual inner product between ${\bf V}_{-\beta}$ and ${\bf V}_{\beta}$. We note that the Sobolev embedding
${\bf V}_{\beta} \hookrightarrow L^2(D) \hookrightarrow {\bf V}_{-\beta}$
holds  for  $0<\beta < 1$, and $
(\tilde{v},v)_{-\beta,\beta}=(\tilde{v},v)$, for $\tilde{v}\in L^2(D)$, $v\in {\bf V}_{\beta}$. Hence,  we have
\begin{align}
(e_i,e_j)_{-\beta,\beta}= (e_i,e_j)=\delta_{ij} \label{ggIP}
\end{align}
where $\delta_{ij}$ is the Kronecker delta for $i,j\in \mathbb{N}$, $i,j\ge 1$.          Moreover, for given $p_1\ge 1$ and $0<\eta<1$, we denote by $ \mathcal X _{p_1,\eta}(J\times D)$ the set of all functions $f$ from $J$ to $L^{p_1}(D)$ such that
\begin{align}
\vertiii{f}_{\mathcal X _{p_1,\eta}}:=\esssup_{0\le t\le T}\int_0^t \norm{f(\tau,.)}_{p_1} (t-\tau)^{\eta-1} d\tau <\infty,\label{Fset}
\end{align}
where $\norm{.}_{p_1}$ is the norm of $L^{p_1}(D)$. Note that, for fixed $t>0$, the H\"older's inequality shows that
\begin{align}
\int_0^t \norm{f(\tau,.)}_{p_1} (t-\tau)^{\eta-1} d\tau \le \left[\int_0^t \norm{f(\tau,.)}_{p_1}^{p_2} d\tau \right]^{\frac{1}{p_2}} \left[\int_0^t (t-\tau)^{\frac{p_2(\eta-1)}{p_2-1}}  d\tau \right]^{\frac{p_2-1}{p_2}}. \nn
\end{align}
In the above inequality, we note that the function $\tau\to   (t-\tau)^{\frac{p_2(\eta-1)}{p_2-1}}$ is integrable  for $p_2>\frac{1}{\eta}$.   Therefore, if we let $L^{p_2}(0,T;L^{p_1}(D))$, $p_1,p_2\ge 1$,  be the space of all Bochner's measurable functions $f$ from $J$ to $L^{p_1}(D)$ such that 
\[
\norm{f}_{L^{p_2}(0,T;L^{p_1}(D))}:=\left[\int_0^t \norm{f(\tau,.)}_{p_1}^{p_2} d\tau \right]^{\frac{1}{p_2}} < \infty,
\]
 then the following inclusion holds
\begin{align}
L^{p_2}(0,T;L^{p_1}(D)) \subset  \mathcal X _{p_1,\eta}(J\times D), \quad \textrm{for } p_2>\frac{1}{\eta}, \label{inclusion}
\end{align}
and there exists a positive constant $C>0$ such that
\begin{align}
\vertiii{f}_{\mathcal X _{p_1,\eta}} \le C \norm{f}_{L^{p_2}(0,T;L^{p_1}(D))}, \label{Holder}
\end{align}
here, $C$ depends only on $p_2$, $\eta$, and $T$. Moreover, for a given number $s$ such that $0<s<\eta$,    we have $\mathcal X _{p_1,\eta-s}(J\times D) \subset \mathcal X _{p_1,\eta}(J\times D)$ since $\vertiii{f}_{\mathcal X _{p_1,\eta}} \le T^s \vertiii{f}_{\mathcal X _{p_1,\eta-s}}$.  Let $B$ be a Banach space, and we denote by $C([0,T],B)$ the space of all continuous functions from $[0,T]$ to $B$ endowed with the norm $\norm{v}_{C([0,T];B)} := \sup_{0\le t\le T} \norm{v(t)}_B$, and by $C^\theta([0,T],B)$ the subspace of $C([0,T];B)$ which includes all H\"older-continuous functions, and is equipped with the norm
\begin{align}
\vertiii{v}_{C^\theta([0,T],B)} :=   \sup_{0\le t_1<t_2\le T} \frac{\norm{v(t_2)-v(t_1)}_B}{|t_2-t_1|^\theta}. \nn
\end{align}
In some cases, a given function might  not be continuous at $t=0$. Hence, it is useful to consider the set $C((0,T];B)$  which consists of all continuous functions  from $(0,T]$ to $B$. We define by $C^{\rho}((0,T];B)$ the Banach space of all functions $v$ in $C((0,T];B)$ such that
\begin{align}
\norm{v}_{C^{\rho}((0,T];B)} := \sup_{0<t\le T} t^\rho \norm{v(t)}_B <\infty. \nn
\end{align}

Now, we discuss  solutions of the FVP for the fractional ordinary equation
\begin{align}
^c{\hspace*{-0.05cm}}D_t^\alpha v(t)  = g(t,v(t)) -  m  v(t), \, t\in J,  \quad \textrm{and} \quad v(T)=v_T , \label{FOE}
\end{align}
where $m$, $v_T$ are given real numbers. Here, we wish to find a representation formula for $v$ in terms of the given function $g$ and the final value data $v_T$. By writing $^c{\hspace*{-0.05cm}}D_t^\alpha=I_t^{1-\alpha}D_t$, and applying the fractional integral $I^\alpha_t$ on both sides of Equation (\ref{FOE}), { we obtain   $$v(t)=v(0)+ I^{\alpha}_t\left[g(t,v(t))-m v(t)\right].$$}
The Laplace transform yields that
$$
\widehat{v}=\frac{\lambda^{\alpha-1}}{\lambda^\alpha+m} v(0)  + \frac{1}{\lambda^\alpha +m} \widehat{g}(v), $$
where $\widehat{v}$ is the Laplace transform of $v$. Hence, the inverse Laplace transform implies
\begin{align}
v(t)= v(0) E_{\alpha,1}(-mt^\alpha)+   g(t,v(t))\star \left[t^{\alpha-1}E_{\alpha,\alpha}(-mt^\alpha)\right] . \label{init}
\end{align}
Here, $E_{\alpha,1}$ and $E_{\alpha,\alpha}$ are the Mittag-Leffler functions
{ which are generally defined by} $  E_{a,b}(z)=\sum_{k=1}^{\infty} \frac{z^k}{\Gamma(a k +b)}$, for $a>0$, $b\in \mathbb{R}$,  $z\in \mathbb{C}$. Now, a representation of the solution of FVP (\ref{FOE}) can be obtained by substituting $t=T$ into (\ref{init}), and using the final value data $v(T)=v_T$, i.e.,
\begin{align}
v(t)=  g(t,v(t))\star \widetilde{E}_{\alpha,\alpha}(-mt^\alpha) + \Big[ v_T - g(T,v(T))\star \widetilde{E}_{\alpha,\alpha}(-m T^\alpha)  \Big] \frac{E_{\alpha,1}(-mt^\alpha)}{E_{\alpha,1}(-mT^\alpha)} , \nn
\end{align}
where  $$\widetilde{E}_{\alpha,\alpha}(-mt^\alpha):=t^{\alpha-1} E_{\alpha,\alpha}(-mt^\alpha),$$ and
\begin{align}
g(T,v(T))\star \widetilde{E}_{\alpha,\alpha}(-m T^\alpha)  := \left.g(r,v(r))\star \widetilde{E}_{\alpha,\alpha}(-m r^\alpha)\right|_{r=T}. \label{ConvolutionDenote}
\end{align}

\subsection{Mild solutions of FVP (\ref{mainpro1})-(\ref{mainpro3}) and unboundedness of solution operators}
A representation of solutions and the definition of mild solutions are given in this subsection, and then we analyze the unboundedness of solution operators. By the definition (\ref{Lbeta}) of $\mathcal L^\beta$, the identity $\mathcal L^\beta e_j(x)= m_j^\beta e_j(x)$ holds. Hence, in view of the Fourier expansion $u(t,x)=\sum_{j=1}^\infty u_j(t)e_j(x)$, where  $u_j(t)=(u(t,.),e_j)$, Equation (\ref{mainpro1}) can be rewritten as
\begin{align}
& \left(^c{\hspace*{-0.05cm}}D_t^\alpha \sum_{j=1}^\infty u_j(t)e_j ,e_j\right) = - \left( \mathcal L^{\beta }  \sum_{j=1}^\infty u_j(t)e_j,e_j \right) + \left(F(t,x,u(t,x)),e_j\right),    \quad t\in J.\nn
\end{align}
This is equivalent to the equation $$^c{\hspace*{-0.05cm}}D_t^\alpha   u_j(t)  = F_j(t,u(t)) - m_j^\beta u_j(t),~~ \quad F_j(t,u(t))=(F(t,x,u(t,x)),e_j).$$ 
By applying the method of solutions of FVPs for fractional ordinary equations in Subsection 2.1, and using the final value data (\ref{mainpro3}), we derive
{ \begin{align}
	\,& \hspace*{-1.3cm} u_j(t)= F_j(t,u(t))\star \widetilde{E}_{\alpha,\alpha}(-m_j^\beta t^\alpha)  \label{uj}\\
	\,& \hspace*{-0.5cm}   + \Big[\varphi_j - F_j(T,u(T))\star \widetilde{E}_{\alpha,\alpha}(-m_j^\beta T^\alpha)  \Big] \frac{E_{\alpha,1}(-m_j^\beta t^\alpha)}{E_{\alpha,1}(-m_j^\beta T^\alpha)}  ,\nn
	\end{align}}
where $\varphi_j =(\varphi,e_j)$. Therefore, we obtain a spectral representation for $u$ as follows:
{ \begin{align}
	u(t,x)  =\,& \sum_{j=1}^\infty F_j(t,u(t))\star \widetilde{E}_{\alpha,\alpha}(-m_j^\beta t^\alpha)e_j(x) \nn \\
	+\,& \sum_{j=1}^\infty \Big[\varphi_j - F_j(T,u(T))\star \widetilde{E}_{\alpha,\alpha}(-m_j^\beta T^\alpha)  \Big] \frac{E_{\alpha,1}(-m_j^\beta t^\alpha)}{E_{\alpha,1}(-m_j^\beta T^\alpha)}e_j(x)  .\nn
	\end{align}}
For $g:(0,T) \to L^2(D)$ and $v\in L^2(D)$, let us denote by $\mathcal O_{n}(t,x)$, $1\le n\le3$, the following operators
\begin{align}
\mathcal O_{1}(t,x)g := \,& \sum_{j=1}^\infty g_j(t) \star \widetilde{E}_{\alpha,\alpha}(-m_j^\beta t^\alpha)e_j(x), ~
\mathcal O_{2} (t,x) v  :=  \sum_{j=1}^\infty v_j  \frac{E_{\alpha,1}(-m_j^\beta t^\alpha)}{E_{\alpha,1}(-m_j^\beta T^\alpha)}e_j(x),
\nn
\end{align}
and $\mathcal O_3(t,x)=-\mathcal O_2(t,x)\mathcal O_1(T,x)$, for $(t,x)\in J\times D$. Then, the solution $u$ can be represented as
\begin{align}
u(t,x)= \mathcal O_1 (t,x) F   + \mathcal O_2(t,x)   \varphi  + \mathcal O_3(t,x) F ,\label{Nmild}
\end{align}
where we understand $F(t,u)=F(t,.,u(t,.))$ is a function of $x$ for fixed $t$.

\vspace*{0.2cm}

One of the most important things, when we consider the well-posedness of a PDE, is the boundedness of solution operators. { Corresponding to the initial value problem \eqref{mainpro1}, \eqref{mainpro2}, \eqref{mainpro33}, the solution operators are usually bounded in $L^2(D)$; see e.g.,  \cite{Mclean2,Mu,Salgado,Yamamoto2,Kian}.   Unfortunately, some solution operators of FVP (\ref{mainpro1})-(\ref{mainpro3}) are not bounded on $L^2(D)$ at $t=0$.   For this purpose, we recall that, for $0<\alpha<1$ and $z<0$, there exist  positive constants $c_\alpha$, $\widehat{c}_\alpha$ such that
	\begin{align}
	\frac{c_\alpha}{1+|z|} \le |E_{\alpha ,1}(z)| \le  \frac{\widehat{c}_\alpha}{1+|z|}, \quad
	|E_{\alpha ,\alpha}(z)| \le \min\left\{ \frac{\widehat{c}_\alpha}{1+|z|}, \frac{\widehat{c}_\alpha}{1+|z|^2} \right\} , \label{BasicMLInequality}
	\end{align}
	see, for example \cite{Samko, Podlubny, Diethelm}. Now, let $v_0$ be defined by $v_{0,j}:=(v_0,e_j)=j^{-1/2}m_j^{-\beta}$, $j\ge 1$. Then, it is easy to see that $v_0$ belongs to ${\bf V}_{\beta\gamma}$ for $0\le \gamma <1$, and does not for $\gamma \ge 1$. Using the inequalities (\ref{BasicMLInequality}), we have
	\begin{align}
	\norm{\mathcal O_2(0,.) v_0}^2=\,&   \sum_{j=1}^\infty   \frac{v^2_{0,j}}{E^2_{\alpha,1}(-m_j^\beta T^\alpha)}    \ge c_\alpha^{-2}  \sum_{j=1}^\infty    v^2_{0,j} (1+m_j^\beta T^\alpha)^2   \nn\\
	\ge\,& c_\alpha^{-2} T^{2\alpha}   \sum_{j=1}^\infty    v^2_{0,j}  m_j^{2\beta}   \hspace*{-0.05cm} =   c_\alpha^{-2} T^{2\alpha}   \sum_{j=1}^\infty    j^{-1}  = \infty, \nn
	\end{align}
	which shows the unboundedness of $\mathcal O_2(0,.)$ on $L^2(D)$. Similarly, the unboundedness of $\mathcal O_3(0,.)$ on $L^2(D)$ can be shown.}

\section{FVP with a linear source}


\noindent In this section, we study the regularity of mild solutions of FVP (\ref{mainpro1})-(\ref{mainpro3}) corresponding to the linear  source function $F$, i.e., $F(t,x,u(t,x))=F(t,x)$ which does not include $u$. We will investigate the regularity of the following FVP
\begin{equation}
\begin{cases}
\begin{array}{llll}
{}^c{\hspace*{-0.05cm}}D_t^\alpha u(t,x)  &=\,\, -  \mathcal L^{\beta }  u(t,x) + F(t,x), & & (t,x)\in J\times D,\\
\mathcal H  u(t,x) &= \,\, 0, & & (t,x) \in  J\times \partial
D,\\
u(x,T)&= \,\, \varphi(x), & & x \in   D,\\
\end{array}
\end{cases}\label{LINEAR}
\end{equation}
where $\varphi$, $F$ will be specified later. In order to consider this problem, it it necessary to give a definition of mild solutions  based on (\ref{Nmild}) as follows.
\begin{definition} If a function $u$ belongs to $L^p(0,T;L^q(D))$, for some $p,q\ge 1$, and satisfies the equation
	\begin{align}
	u(t,x)= \mathcal O_1 (t,x) F    + \mathcal O_2(t,x)   \varphi  + \mathcal O_3(t,x) F  ,\label{mild}
	\end{align}
	then $u$ is said to be a mild solution of FVP  (\ref{LINEAR}).
\end{definition}

In what follows, we introduce some assumptions on the final value data $\varphi$ and the linear source function $F$.
\vspace*{0.2cm}
\begin{itemize}
	\item (R1) $0<{p}, {q}<1$ such that ${p}+{q}=1$; \vspace*{0.1cm}
	\item (R2) $\displaystyle 0<r\le \frac{1-\alpha q}{\alpha q}$; \vspace*{0.2cm}
	\item (R3) $0<s<\min \big(\alpha q,1-\alpha q \big)$; \vspace*{0.1cm}
	\item (R4) $
	\displaystyle 0 < p' \le   p- \frac{s}{\alpha} , \hspace*{1.32cm} q' =1- p' , \quad 0<r\le \frac{1-\alpha q'}{\alpha q'}
	$; \vspace*{0.1cm}
	\item (R5) $\displaystyle 0 \le \widehat{q} \le  \min\left(p,q,\frac{s}{\alpha}\right), \quad \widehat{p}=1-\widehat{q}, \hspace*{0.45cm}  0<\widehat{r} \le  \frac{1-\alpha}{\alpha }$. \vspace*{0.1cm}
\end{itemize}

In the following lemma, we will show that solutions of FVP (\ref{LINEAR}) must be bounded by a power function $t^{-\alpha q}$, for some appropriate number $q$, i.e.,
\begin{align}
\norm{u(t,.)} \lesssim t^{-\alpha q}, \nn
\end{align}
for all $0<t\le T$.

\begin{lemma} \label{Lemma1} Let $p,q$ be defined by (R1), and $u$ satisfies (\ref{mild}). If $\varphi\in {\bf V}_{\beta{p}}$,  and $F\in \mathcal X _{2,\alpha{q}}(J\times D)$, then there exists a constant $C_0>0$ such that
	\begin{align}
	\norm{u(t,.)} \le C_0\left(\norm{\varphi}_{{\bf V}_{\beta{p}}} + \vertiii{F}_{\mathcal X _{2,\alpha{q}}}\right)t^{-\alpha{q}} .     \label{RLemma1}
	\end{align}
\end{lemma}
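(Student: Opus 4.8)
The plan is to pass to the Fourier side. By Parseval's identity $\norm{u(t,\cdot)}^2=\sum_{j\ge1}|u_j(t)|^2$, so after splitting the mild solution (\ref{mild}) into its three pieces $\mathcal O_1(t,\cdot)F$, $\mathcal O_2(t,\cdot)\varphi$ and $\mathcal O_3(t,\cdot)F$, it suffices to estimate each in $L^2(D)$ and combine by the triangle inequality. The only analytic inputs I will need are the two-sided Mittag-Leffler bounds (\ref{BasicMLInequality}), the positivity of the spectrum $m_j\ge m_1>0$, and the elementary inequality $1+x\ge x^\theta$ valid for $x>0$ and $0<\theta<1$; all of the exponent bookkeeping is driven by the relation $p+q=1$ from (R1).

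For the final-data term I would first estimate the spectral multiplier. Setting $a=m_j^\beta$, (\ref{BasicMLInequality}) gives $E_{\alpha,1}(-at^\alpha)/E_{\alpha,1}(-aT^\alpha)\le (\widehat c_\alpha/c_\alpha)(1+aT^\alpha)/(1+at^\alpha)$. Splitting the numerator, bounding $1/(1+at^\alpha)\le 1\le m_1^{-\beta p}T^{\alpha q}m_j^{\beta p}t^{-\alpha q}$ (using $m_j\ge m_1$ and $t\le T$) and the remaining term by
\[
\frac{aT^\alpha}{1+at^\alpha}\le \frac{aT^\alpha}{(at^\alpha)^{q}}=T^\alpha m_j^{\beta p}t^{-\alpha q},
\]
since $a^{1-q}=m_j^{\beta p}$, yields the pointwise bound $\lesssim m_j^{\beta p}t^{-\alpha q}$ for the multiplier. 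Squaring, multiplying by $\varphi_j^2$ and summing then gives $\norm{\mathcal O_2(t,\cdot)\varphi}\le C\,t^{-\alpha q}\norm{\varphi}_{{\bf V}_{\beta p}}$, which is the $\varphi$-contribution to (\ref{RLemma1}).

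The source term $\mathcal O_1(t,\cdot)F$ is the crux. Using $|E_{\alpha,\alpha}(-z)|\le \widehat c_\alpha/(1+z)$ followed by $1+m_j^\beta(t-\tau)^\alpha\ge (m_j^\beta(t-\tau)^\alpha)^{p}$, and noting $\alpha-1-\alpha p=\alpha q-1$, the convolution kernel obeys
\[
(t-\tau)^{\alpha-1}\big|E_{\alpha,\alpha}(-m_j^\beta(t-\tau)^\alpha)\big|\le \widehat c_\alpha\, m_j^{-\beta p}(t-\tau)^{\alpha q-1}.
\]
Hence $|(\mathcal O_1(t,\cdot)F)_j|\le \widehat c_\alpha\,m_j^{-\beta p}\int_0^t|F_j(\tau)|(t-\tau)^{\alpha q-1}\,d\tau$. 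Bounding $m_j^{-2\beta p}\le m_1^{-2\beta p}$, summing the squares over $j$, and applying Minkowski's integral inequality (with the counting measure in $j$) to move the $\ell^2$-sum inside the $\tau$-integral converts $\sum_j(\int\cdots)^2$ into $\big(\int_0^t\norm{F(\tau,\cdot)}_2(t-\tau)^{\alpha q-1}\,d\tau\big)^2\le \vertiii{F}_{\mathcal X_{2,\alpha q}}^2$. This gives $\norm{\mathcal O_1(t,\cdot)F}\le C\vertiii{F}_{\mathcal X_{2,\alpha q}}$, and since $t\le T$ the bound $CT^{\alpha q}t^{-\alpha q}\vertiii{F}_{\mathcal X_{2,\alpha q}}$ follows.

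For $\mathcal O_3(t,\cdot)F=-\mathcal O_2(t,\cdot)\,\mathcal O_1(T,\cdot)F$ I would set $w:=\mathcal O_1(T,\cdot)F$ and observe that reading the previous kernel estimate at $t=T$ gives $m_j^{\beta p}|w_j|\le \widehat c_\alpha\int_0^T|F_j(\tau)|(T-\tau)^{\alpha q-1}\,d\tau$; Minkowski's inequality then yields $w\in{\bf V}_{\beta p}$ with $\norm{w}_{{\bf V}_{\beta p}}\le \widehat c_\alpha\vertiii{F}_{\mathcal X_{2,\alpha q}}$. Feeding $w$ into the $\mathcal O_2$ estimate of the second paragraph produces $\norm{\mathcal O_3(t,\cdot)F}\le C\,t^{-\alpha q}\vertiii{F}_{\mathcal X_{2,\alpha q}}$, and summing the three contributions gives (\ref{RLemma1}). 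I expect the main obstacle to lie in the $\mathcal O_1$ and $\mathcal O_3$ terms: the kernel estimate must be arranged so that the powers of $m_j$ and of $t-\tau$ match the defining weights of ${\bf V}_{\beta p}$ and $\mathcal X_{2,\alpha q}$ exactly (which is precisely what $p+q=1$ secures), and the interchange of the discrete $\ell^2$-sum with the time integral must be justified by Minkowski's integral inequality rather than a naive Cauchy--Schwarz, so as not to destroy the convolution structure or lose the sharp $t^{-\alpha q}$ rate.
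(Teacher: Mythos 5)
Your proposal is correct and follows essentially the same route as the paper: the same three-operator decomposition, the same Mittag-Leffler bounds interpolated via $p+q=1$ to trade powers of $m_j^{\beta}$ against powers of $t$ or $t-\tau$, and the same Minkowski-type interchange of the $\ell^2$-sum with the time integral. Your only departures are cosmetic — splitting the numerator $1+m_j^\beta T^\alpha$ instead of bounding the ratio by $(1+m_j^\beta T^\alpha)^{p}\,T^{\alpha q}t^{-\alpha q}$ directly, and factoring the $\mathcal O_3$ estimate through $\norm{\mathcal O_1(T,\cdot)F}_{{\bf V}_{\beta p}}$ rather than estimating the composed kernel in one pass — and both yield the same constants up to relabeling.
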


\begin{proof}   The inequalities  (\ref{BasicMLInequality}) shows that
	\begin{align}
	\,& E_{\alpha,\alpha}(-m_j^\beta (t-\tau)^\alpha) \le  \widehat{c}_\alpha  \big[1+m_j^\beta (t-\tau)^\alpha\big]^{- {p}}\le  \widehat{c}_\alpha m_j^{- \beta{p}} (t-\tau)^{- \alpha{p}}.
	\end{align}	
	Combined with the definition  $\displaystyle  \mathcal O_1 (t,x) F  =   \sum_{j=1}^\infty F_j(t) \star \widetilde{E}_{\alpha,\alpha}(-m_j^\beta t^\alpha)e_j(x) $,
	we have that
	\begin{align}
	\norm{\mathcal O_1 (t,.) F}
	\le\,& \int_0^t \norm{\sum_{j=1}^\infty F_j(\tau) \widetilde{E}_{\alpha,\alpha}(-m_j^\beta (t-\tau)^\alpha)e_j} d\tau  \nn \\
	=\,& \int_0^t \left\{\sum_{j=1}^\infty F_j^2(\tau) E^2_{\alpha,\alpha}(-m_j^\beta (t-\tau)^\alpha) (t-\tau)^{2\alpha-2} \right\}^{1/2} d\tau \nn\\
	\le\,& \widehat{c}_\alpha \int_0^t \left\{\sum_{j=1}^\infty F_j^2(\tau) m_j^{-2\beta{p}} (t-\tau)^{-2\alpha{p}} (t-\tau)^{2\alpha-2} \right\}^{1/2} d\tau.\label{Theo1aaa}
	\end{align}
	Hence, we obtain the following estimate
	\begin{align}
	\norm{\mathcal O_1 (t,.) F}  \le\,&  \widehat{c}_\alpha m_1^{- \beta{p}} \int_0^t \norm{F(\tau,.)} (t-\tau)^{\alpha{q}-1} d\tau \le M_1 t^{-\alpha {q} } \vertiii{F}_{\mathcal X _{2,\alpha{q}}}, \label{Theo1O1}
	\end{align}
	by noting (\ref{Fset}) and letting $M_1=\widehat{c}_\alpha m_1^{- \beta{p}}T^{\alpha{q}}$.  In addition, the norm $\norm{\mathcal O_{2}(t,x)\varphi}$ can be estimated as
	\begin{align}
	\norm{\mathcal O_{2}(t,.)\varphi}   =\,&    \left\{\sum_{j=1}^\infty  \varphi^2_j  \frac{E^2_{\alpha,1}(-m_j^\beta t^\alpha)}{E^2_{\alpha,1}(-m_j^\beta T^\alpha)} \right\}^{1/2}   \le \widehat{c}_\alpha c^{-1}_\alpha \left\{ \sum_{j=1}^\infty \varphi_j^2 \left[\frac{1+m_j^\beta T^\alpha}{1+m_j^\beta t^\alpha}\right]^2 \right\}^{1/2}. \nn
	\end{align}
	The ratio $\frac{1+m_j^\beta T^\alpha}{1+m_j^\beta t^\alpha}$ is clearly bounded by both $1+m_j^\beta T^\alpha$ and $\frac{T^\alpha}{t^\alpha}$. Moreover, the increasing property of the sequence $\{m_j\}_{j\ge 1}$ shows $1\le m_1^{-\beta} m_j^\beta$. Thus, we have  $1+m_j^\beta T^\alpha \le (m_1^{-\beta}+T^\alpha)m_j^\beta$. By  noting $p+q=1$, one can deduce that the ratio is bounded by the product of $T^{\alpha{q}} t^{-\alpha{q}}$ and $(1+m_j^\beta T^\alpha)^{{p}}$. Bring the above arguments together, and this leads to
	\begin{align}
	\norm{\mathcal O_{2}(t,.)\varphi} \le\,& \widehat{c}_\alpha c^{-1}_\alpha \left\{ \sum_{j=1}^\infty \varphi_j^2 \frac{T^{2\alpha{q}}}{t^{2\alpha{q}}} (1+m_j^\beta T^\alpha)^{2{p}} \right\}^{1/2} \le M_2  t^{-\alpha{q}} \norm{\varphi}_{{\bf V}_{\beta{p}}}, \label{Theo1O2}
	\end{align}
	where $$M_2=\widehat{c}_\alpha c^{-1}_\alpha  T^{\alpha {q}}   (m_1^{-\beta}+T^\alpha)^{{p}}.$$
	Now,  we proceed to estimate   $\norm{\mathcal O_{3}(t,x)\varphi}$ by using the same techniques as in (\ref{Theo1aaa}) and (\ref{Theo1O2}). As a consequence of
	\begin{align}
	\mathcal O_3(t,x)F = \,&  \mathcal O_2(t,x)\mathcal O_1(T,x)F    =  \sum_{j=1}^\infty   F_j(T)\star \widetilde{E}_{\alpha,\alpha}(-m_j^\beta T^\alpha)   \frac{E_{\alpha,1}(-m_j^\beta t^\alpha)}{E_{\alpha,1}(-m_j^\beta T^\alpha)}e_j(x) , \nn
	\end{align}
	we can obtain the following estimates
	\begin{align}
	\norm{\mathcal O_3(t,.)F} 
	\le\,& \int_0^T \norm{\sum_{j=1}^\infty F_j(\tau) E_{\alpha,\alpha}(-m_j^\beta (T-\tau)^\alpha) (T-\tau)^{\alpha-1} \frac{E_{\alpha,1}(-m_j^\beta t^\alpha)}{E_{\alpha,1}(-m_j^\beta T^\alpha)} e_j} d\tau \nn \\
	=\,& \int_0^T  \left\{\sum_{j=1}^\infty F_j^2(\tau) E^2_{\alpha,\alpha}(-m_j^\beta (T-\tau)^\alpha) (T-\tau)^{2\alpha-2} \frac{E^2_{\alpha,1}(-m_j^\beta t^\alpha)}{E^2_{\alpha,1}(-m_j^\beta T^\alpha)}   \right\}^{1/2} d\tau \nn\\
	\le\,& \widehat{c}_\alpha^2 c^{-1}_\alpha \int_0^T  \left\{\sum_{j=1}^\infty F_j^2(\tau)  m_j^{-2\beta{p}}  (T-\tau)^{2\alpha q-2} \frac{T^{2\alpha{q}}}{t^{2\alpha{q}}} (1+m_j^\beta T^\alpha)^{2{p}}   \right\}^{1/2} d\tau.  \nn
	\end{align}
	A simple computation shows that
	\begin{align}
	\norm{\mathcal O_3(t,.)F}\le\,&  M_3  t^{-\alpha{q}}\int_0^T \norm{F(\tau,.)} (T-\tau)^{\alpha{q}-1} d\tau \le M_3  t^{-\alpha{q}} \vertiii{F}_{\mathcal X _{2,\alpha{q}}}, \label{Theo1O3}
	\end{align}
	where we let $M_3=\widehat{c}_\alpha M_2$.  Finally, it follows from (\ref{Theo1O1}), (\ref{Theo1O2}),  (\ref{Theo1O3}), and the identity (\ref{mild})  that
	\begin{align}
	\norm{u(t,.)}\le\,& \norm{\mathcal O_1(t,.)F}+\norm{\mathcal O_2(t,.)\varphi}+\norm{\mathcal O_3(t,.)F} \hspace*{1.45cm} \nn\\
	\le\,& \left(\sum_{1\le n \le 3}M_n \right) \left(\norm{\varphi}_{{\bf V}_{\beta{p}}} + \vertiii{F}_{\mathcal X _{2,\alpha{q}}}\right)t^{-\alpha{q}}. \nn
	\end{align}
	The inequality (\ref{RLemma1}) is proved	by letting $\displaystyle C_0=\sum_{1\le n \le 3}M_n$.
\end{proof}

Based on  Lemma \ref{Lemma1}, we consider existence, uniqueness, and regularity of solutions in the following theorem which is divided into two parts. In the first part, we obtain the existence and uniqueness of a mild solution in  the space $L^{\frac{1}{\alpha q}-r}(0,T;L^2(D))$ for some suitable numbers $q$, $r$ and for the given  assumptions on $\varphi$ and $F$ as in Lemma \ref{Lemma1}. In the second part, we improve the smoothness of the mild solution by considering the spatial-fractional derivative $\mathcal L^{\beta(p-p')}$. It is very important to investigate the continuity of the mild solution. We first show that the mild solution is continuous from $(0,T]$ to $L^2(D)$. Moreover, we  establish the continuity on the closed interval $[0,T]$ which corresponds to lower spatial-smoothness, $V_{-\beta q'}$, for a relevant number $q'$.

\begin{theorem}\label{STheo1} \hspace*{20cm}\\
	a) Let $p,q,r$ be defined by (R1), (R2). If $\varphi\in {\bf V}_{\beta{p}}$  and $F\in \mathcal X _{2,\alpha{q}}(J\times D)$, then FVP (\ref{LINEAR}) has a unique solution $u$ in $L^{\frac{1}{\alpha q}-r}(0,T;L^2(D))$. Moreover, there exists a positive constant $C_1$ such that
	\begin{align}
	\norm{u}_{L^{\frac{1}{\alpha q}-r}(0,T;L^2(D))}  \le   C_1\norm{\varphi}_{{\bf V}_{\beta{p}}} + C_1\vertiii{F}_{\mathcal X _{2,\alpha{q}}}  . \label{RTheo1}
	\end{align}
	
	\vspace*{0.2cm}
	
	\noindent b) Let $p,q,s,r,p',q'$ be defined by (R1), (R3), (R4).  If $\varphi\in {\bf V}_{\beta{p}}$, and $F\in \mathcal X _{2,\alpha{q}- s }(J\times D)$, then FVP (\ref{LINEAR}) has a unique solution $u$ such that
	\begin{align}
	\,& u\in L^{\frac{1}{\alpha q'}-r}(0,T;{\bf V}_{\beta({p}- p' )})\cap C^{\alpha{q}}((0,T];L^2(D)) \cap C^{s}([0,T];{\bf V}_{-\beta {q}'}). \nn
	\end{align}
	Moreover, there exists a positive constant $C_2$ such that
	\begin{align}
	\,& \hspace*{-0.6cm} \norm{u}_{L^{\frac{1}{\alpha q'}-r}(0,T;{\bf V}_{\beta({p}-p')})} + \norm{u}_{C^{\alpha q}(0,T;L^2(D))} +  \vertiii{u}_{C^{s}([0,T];{\bf V}_{-\beta {q}'})} \label{RTheo2}  \\
	\,& \hspace*{4.6cm} \le  C_2\norm{\varphi}_{{\bf V}_{\beta{p}}} + C_2\vertiii{F}_{\mathcal X_{2,\alpha{q}-s}}  . \nn
	\end{align}
\end{theorem}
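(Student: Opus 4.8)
The plan is to use Lemma~\ref{Lemma1} as the engine for both parts. For part (a), uniqueness is automatic: the mild solution is given by the \emph{explicit} linear formula \eqref{mild}, so the map $(\varphi,F)\mapsto u$ is single-valued and any mild solution must coincide with it. For existence and the bound \eqref{RTheo1}, I would raise the pointwise-in-time estimate of Lemma~\ref{Lemma1} to the power $\frac{1}{\alpha q}-r$ and integrate:
\begin{align}
\int_0^T\big(t^{-\alpha q}\big)^{\frac{1}{\alpha q}-r}\,dt=\int_0^T t^{-1+\alpha q r}\,dt=\frac{T^{\alpha q r}}{\alpha q r}<\infty,\nn
\end{align}
the exponent being admissible, i.e. $\frac{1}{\alpha q}-r\ge 1$, precisely because of (R2). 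This yields $u\in L^{\frac{1}{\alpha q}-r}(0,T;L^2(D))$ with \eqref{RTheo1}.

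For part (b) I would establish the three memberships separately. For the space $L^{\frac{1}{\alpha q'}-r}(0,T;{\bf V}_{\beta(p-p')})$, I would repeat the three operator estimates in the proof of Lemma~\ref{Lemma1}, but after applying $\mathcal L^{\beta(p-p')}$, which inserts the factor $m_j^{\beta(p-p')}$ into each summand. Running the Mittag--Leffler bounds \eqref{BasicMLInequality} now with the pair $(p',q')$ in the role of $(p,q)$ and absorbing $(1+m_j^\beta T^\alpha)^{2p'}$ against $m_j^{2\beta p'}$ via $1+m_j^\beta T^\alpha\le(m_1^{-\beta}+T^\alpha)m_j^\beta$, I expect $\norm{\mathcal L^{\beta(p-p')}u(t,.)}\le C t^{-\alpha q'}\big(\norm{\varphi}_{{\bf V}_{\beta p}}+\vertiii{F}_{\mathcal X_{2,\alpha q-s}}\big)$; the replacement of $\mathcal X_{2,\alpha q}$ by $\mathcal X_{2,\alpha q-s}$ is harmless thanks to the embedding $\mathcal X_{2,\alpha q-s}\subset\mathcal X_{2,\alpha(q+p')}$ recorded after \eqref{Holder} (valid since $s<\alpha q$ by (R3)). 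Integrating $t^{-\alpha q'(\frac{1}{\alpha q'}-r)}=t^{-1+\alpha q'r}$ exactly as in part (a), using now $r\le\frac{1-\alpha q'}{\alpha q'}$ from (R4), gives this space and its bound.

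The membership $u\in C^{\alpha q}((0,T];L^2(D))$ splits into the weighted bound $\sup_{0<t\le T}t^{\alpha q}\norm{u(t,.)}<\infty$, which is exactly Lemma~\ref{Lemma1}, and continuity on $(0,T]$, which follows from continuity in $t$ of the Mittag--Leffler functions together with uniform convergence of the spectral series on each compact $[t_0,T]\subset(0,T]$. For the H\"older space $C^s([0,T];{\bf V}_{-\beta q'})$ I would first record a time-H\"older bound for the kernels: from $\frac{d}{dt}E_{\alpha,1}(-m_j^\beta t^\alpha)=-m_j^\beta t^{\alpha-1}E_{\alpha,\alpha}(-m_j^\beta t^\alpha)$, \eqref{BasicMLInequality} with exponent $a=1-\tfrac{s}{\alpha}$ (admissible since $0<s<\alpha$ by (R3)), and the subadditivity $t_2^{s}-t_1^{s}\le(t_2-t_1)^{s}$, one gets $|E_{\alpha,1}(-m_j^\beta t_2^\alpha)-E_{\alpha,1}(-m_j^\beta t_1^\alpha)|\le C\,m_j^{\beta s/\alpha}|t_2-t_1|^s$, and an analogous bound for $\widetilde{E}_{\alpha,\alpha}(-m_j^\beta\cdot)$. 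Feeding this into the $\mathcal O_2$-term and weighting by $m_j^{-\beta q'}$, together with $1/|E_{\alpha,1}(-m_j^\beta T^\alpha)|\le C\,m_j^\beta$, the $j$-th summand carries $m_j^{2\beta(p'+s/\alpha)}$ (using $1-q'=p'$); the first inequality of (R4), $p'+\tfrac{s}{\alpha}\le p$, then bounds the sum by $\norm{\varphi}_{{\bf V}_{\beta p}}^2$, giving $\norm{\mathcal O_2(t_2,.)\varphi-\mathcal O_2(t_1,.)\varphi}_{{\bf V}_{-\beta q'}}\le C|t_2-t_1|^s\norm{\varphi}_{{\bf V}_{\beta p}}$.

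The crux is the H\"older continuity of the convolution operators $\mathcal O_1$ and $\mathcal O_3$ in ${\bf V}_{-\beta q'}$. Writing $K_j(\sigma):=\widetilde{E}_{\alpha,\alpha}(-m_j^\beta\sigma^\alpha)$ and splitting, for $0\le t_1<t_2$,
\begin{align}
u_j^{(1)}(t_2)-u_j^{(1)}(t_1)=\int_0^{t_1}F_j(\tau)\big[K_j(t_2-\tau)-K_j(t_1-\tau)\big]\,d\tau+\int_{t_1}^{t_2}F_j(\tau)K_j(t_2-\tau)\,d\tau,\nn
\end{align}
the tail integral is controlled as in the $\mathcal O_1$-estimate of Lemma~\ref{Lemma1} restricted to $[t_1,t_2]$, while the first integral needs the kernel-H\"older bound above. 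The delicate point, and the step I expect to be the main obstacle, is the bookkeeping: the extra smoothness carried by $F\in\mathcal X_{2,\alpha q-s}$ (as opposed to the $\mathcal X_{2,\alpha q}$ of part (a)) is exactly what must be traded for the time-exponent $s$, and the powers of $m_j$ and of $|t_2-t_1|$ have to be balanced so that the weighted sum closes against $\vertiii{F}_{\mathcal X_{2,\alpha q-s}}$ and $m_j^{-\beta q'}$; this is where (R3)--(R4) are consumed in full. Since all kernel estimates and their integral representations remain valid at $t_1=0$, the family $\{u(t,.)\}$ is Cauchy in ${\bf V}_{-\beta q'}$ as $t\to0$; defining $u(0,.)$ as this limit extends the H\"older bound to $[0,T]$ and yields the last term of \eqref{RTheo2}.
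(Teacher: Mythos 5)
Your proposal follows essentially the same route as the paper: part (a) is Lemma~\ref{Lemma1} raised to the power $\tfrac{1}{\alpha q}-r$ and integrated using (R2); for part (b) you apply $\mathcal L^{\beta(p-p')}$ and rebalance the Mittag--Leffler decay exactly as in the paper's estimates (\ref{Theo2O1})--(\ref{Theo2O3}), and your $[0,t_1]$/$[t_1,t_2]$ splitting of the convolution combined with the derivative identities for $E_{\alpha,1}$ and $\widetilde{E}_{\alpha,\alpha}$ is precisely the paper's decomposition $\mathcal I_1+\mathcal I_2+\mathcal I_3+\mathcal I_4$ in (\ref{ConFor}). The ``bookkeeping'' you single out as the main obstacle is carried out in the paper's estimates (\ref{I1aaa})--(\ref{I4COT}) and closes exactly as you predict: the extra $\mathcal X_{2,\alpha q-s}$ regularity of $F$ is traded for the factor $(t_2-t_1)^s$, and the condition $p'+s/\alpha\le p$ from (R4) (equivalently $\alpha(p-p')\ge s$) absorbs the powers of $m_j$ against $\norm{\varphi}_{{\bf V}_{\beta p}}$.
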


\begin{proof}  The proof of Part a) can be easily obtained  from Lemma \ref{Lemma1}. Indeed, the inequality (\ref{RLemma1}) leads  to
	\begin{align}
	\norm{u}_{L^{\frac{1}{\alpha q}-r}(0,T;L^2(D))}  \le   C_0\left(\norm{\varphi}_{{\bf V}_{\beta{p}}} + \vertiii{F}_{\mathcal X _{2,\alpha{q}}}\right) \left\{ \int_0^T t^{-\alpha{q}\left(\frac{1}{\alpha q}-r\right)}dt \right\}^{\frac{\alpha q}{1-\alpha q r}}. \nn
	\end{align}
	Since $-\alpha{q}\left(\frac{1}{\alpha q}-r\right)>-1$, the integral in the above inequality exists, i.e., $t^{-\alpha{q}}$ belongs to $L^{\frac{1}{\alpha q}-r}(0,T;\mathbb{R})$.  Hence, FVP (\ref{LINEAR}) has a solution $u$ in $L^{\frac{1}{\alpha q}-r}(0,T;L^2(D))$. The uniqueness of $u$ is obvious. Moreover, the inequality (\ref{RTheo1}) is derived  by  letting $C_1=C_0\norm{t^{-\alpha{q}}}_{L^{\frac{1}{\alpha q}-r}(0,T;\mathbb{R})}$. Now, we proceed to prove Part b) which will be presented in the following steps.
	
	\vspace*{0.2cm}
	
	\noindent {\bf Step 1:} We prove $u\in L^{\frac{1}{\alpha q}-r}(0,T;{\bf V}_{\beta({p}- p' )})$. Firstly, by the same argument as in the proof of (\ref{Theo1aaa}),    we derive the following chain of  inequalities
	\begin{align}
	\norm{\mathcal O_1 (t,.) F}_{{\bf V}_{\beta({p}- p' )}}
	\,&\le \int_0^t \norm{\mathcal L^{\beta({p}- p' )}\sum_{j=1}^\infty F_j(\tau) \widetilde{E}_{\alpha,\alpha}(-m_j^\beta (t-\tau)^\alpha)e_j} d\tau, \nn \\
	\,&\le \widehat{c}_\alpha \int_0^t \left\{\sum_{j=1}^\infty F_j^2(\tau)  m_j^{-2\beta{p}} (t-\tau)^{-2\alpha{p}} (t-\tau)^{2\alpha-2} m_j^{2\beta({p}- p' )} \right\}^{1/2} d\tau \hspace*{0.95cm} \nn\\
	\,&\le \widehat{c}_\alpha m_1^{-\beta p' } \int_0^t \norm{F(\tau,.)} (t-\tau)^{\alpha{q}-1} d\tau \le M_4 t^{-\alpha q' } \vertiii{F}_{\mathcal X_{2,\alpha{q}- s }},  \label{Theo2O1}
	\end{align}
	for  $M_4=\widehat{c}_\alpha m_1^{-\beta p' } T^{\alpha q' + s }$,   where the inequality $\vertiii{F}_{\mathcal X_{2,\alpha{q}}} \le T^{ s } \vertiii{F}_{\mathcal X_{2,\alpha{q}- s}}$ holds. Similarly, from  $\norm{\mathcal O_{2}(t,.)\varphi}_{{\bf V}_{\beta({p}- p' )}} =  \norm{\mathcal L^{\beta({p}- p' )} \mathcal O_{2}(t,.)\varphi}$, and  the same way as in the proof of (\ref{Theo1O2}), we have
	\begin{align}
	\,& \norm{\mathcal O_{2}(t,.)\varphi}_{{\bf V}_{\beta({p}- p' )}} \nn\\
	\le\,& \widehat{c}_\alpha c^{-1}_\alpha \left\{ \sum_{j=1}^\infty \varphi_j^2 \frac{T^{2\alpha q' }}{t^{2\alpha q' }} (1+m_j^\beta T^\alpha)^{2 p' } m_j^{2\beta({p}- p' )} \right\}^{1/2} \le M_5 t^{-\alpha q' }  \norm{\varphi}_{{\bf V}_{\beta{p}}}, \label{Theo2O2}
	\end{align}
	where we let $M_5=\widehat{c}_\alpha c^{-1}_\alpha T^{ \alpha q' } (m_1^{-\beta} +T^\alpha)^{ p' }$.  Now, we will estimate the norm $\norm{\mathcal O_3(t,.)F}_{{\bf V}_{\beta({p}- p' )}}$ which will use the same estimate for the fraction $\frac{E_{\alpha,1}(-m_j^\beta t^\alpha)}{E_{\alpha,1}(-m_j^\beta T^\alpha)}$ as in (\ref{Theo2O2}). Indeed, noting that $(1+m_j^\beta T^\alpha)^{ p' } \le (m_1^{-\beta}+ T^\alpha)^{ p' }  m_j^{\beta p' }$, by using (\ref{BasicMLInequality}), we see  that
	\begin{align}
	\,& \norm{\mathcal O_3(t,.)F}_{{\bf V}_{\beta({p}- p' )}} \nn\\
	\le\,& \int_0^T \norm{\mathcal L^{\beta({p}- p' )}\sum_{j=1}^\infty F_j(\tau) E_{\alpha,\alpha}(-m_j^\beta (T-\tau)^\alpha) (T-\tau)^{\alpha-1} \frac{E_{\alpha,1}(-m_j^\beta t^\alpha)}{E_{\alpha,1}(-m_j^\beta T^\alpha)} e_j} d\tau \nn \\
	=\,& \int_0^T  \left\{\sum_{j=1}^\infty F_j^2(\tau) E^2_{\alpha,\alpha}(-m_j^\beta (T-\tau)^\alpha) (T-\tau)^{2\alpha-2} \frac{E^2_{\alpha,1}(-m_j^\beta t^\alpha)}{E^2_{\alpha,1}(-m_j^\beta T^\alpha)}  m_j^{2\beta({p}- p' )} \right\}^{1/2} d\tau  \nn\\
	\le\,& \widehat{c}_\alpha M_5 \int_0^T  \left\{\sum_{j=1}^\infty  F_j^2(\tau)  m_j^{-2\beta{p}} (T-\tau)^{-2\alpha{p}}  (T-\tau)^{2\alpha-2}  t^{-2\alpha q' }  m_j^{2\beta p'} m_j^{2\beta({p}- p' )}  \right\}^{1/2} d\tau. \nn
	\end{align}
	Thus, by some simple computations, one can get
	\begin{align}
	\norm{\mathcal O_3(t,.)F}_{{\bf V}_{\beta({p}- p' )}} 
	\le\,& \widehat{c}_\alpha M_5 t^{-\alpha q' } \int_0^T  \norm{F(t,.)} (T-\tau)^{\alpha{q}-1} d\tau  \le M_6 t^{-\alpha q' } \vertiii{F}_{\mathcal X_{2,\alpha{q}-s}},\label{Theo2O3}
	\end{align}
	with $M_6=\widehat{c}_\alpha M_5 T^{s}$,	where  the inequality $\vertiii{F}_{\mathcal X_{2,\alpha{q}}} \le T^{ s } \vertiii{F}_{\mathcal X_{2,\alpha{q}- s}}$ has been used.  Bring (\ref{Theo2O1}), (\ref{Theo2O2}), (\ref{Theo2O3}) and (\ref{mild}) together, we have
	\begin{align}
	\norm{u(t,.)}_{{\bf V}_{\beta({p}- p' )}}  \le M_7\left(\norm{\varphi}_{V_{\beta{p}}} + \vertiii{F}_{\mathcal X_{2,\alpha{q}-s}}\right) t^{-\alpha q' },    \nn
	\end{align}
	where $ M_7=\sum_{4\le n\le 6} M_n$. Since the function $t \to    t^{-\alpha q' }$ is clearly contained in the space $L^{\frac{1}{\alpha q'}-r}(0,T;\mathbb{R})$, we can  take the $L^{\frac{1}{\alpha q'}-r}(0,T;\mathbb{R})$-norm on both sides of the above inequality, namely
	\begin{align}
	\norm{u}_{L^{\frac{1}{\alpha q'}-r}(0,T;{\bf V}_{\beta({p}-p')})}  \le   M_8\norm{\varphi}_{{\bf V}_{\beta{p}}} + M_8 \vertiii{F}_{\mathcal X_{2,\alpha{q}-s}}  , \label{RTheo2aaa}
	\end{align}
	where $M_8=M_7 \norm{t^{-\alpha{q}'}}_{L^{\frac{1}{\alpha q'}-r}(0,T;\mathbb{R})}$.
	
	\vspace*{0.2cm}
	\noindent {\bf Step 2:} We prove $u\in C^{\alpha{q}}((0,T];L^2(D))$. Let us consider   $0<t_1<t_2\le T$.  By (\ref{mild}), the difference $u(t_2,x)-u(t_1,x)$ can be calculated as
	\begin{align}
	\,& u(t_2,x)-u(t_1,x) \nn\\
	=\,&   \sum_{j=1}^\infty \left. F_j(t) \star \widetilde{E}_{\alpha,\alpha}(-m_j^\beta t^\alpha)\right|_{t=t_1}^{t=t_2} e_j(x) + \sum_{j=1}^\infty \varphi_j \left. \frac{E_{\alpha,1}(-m_j^\beta t^\alpha)}{E_{\alpha,1}(-m_j^\beta T^\alpha)} \right|_{t=t_1}^{t=t_2} e_j(x)  \nn\\
	-\,& \sum_{j=1}^\infty   F_j(T)\star \widetilde{E}_{\alpha,\alpha}(-m_j^\beta T^\alpha)   \left.\frac{E_{\alpha,1}(-m_j^\beta t^\alpha)}{E_{\alpha,1}(-m_j^\beta T^\alpha)} \right|_{t=t_1}^{t=t_2}e_j(x).\nn
	\end{align}
	By using the differentiation identities $$D_\omega   E_{\alpha ,1}(-m_j^\beta  \omega^\alpha )  =  -m_j^\beta \widetilde{E}_{\alpha ,\alpha  }(-m_j^\beta \omega^\alpha ) {~~\rm and~~}  D_\omega \big(\widetilde{E}_{\alpha ,\alpha  }(-m_j^\beta \omega^\alpha )\big) =  \omega^{\alpha -2}E_{\alpha ,\alpha -1}(-m_j^\beta \omega^\alpha ),$$ see, for example \cite{Diethelm,Podlubny,Samko}, we have
	\begin{align}
	\,& \left. F_j(t) \star \widetilde{E}_{\alpha,\alpha}(-m_j^\beta t^\alpha)\right|_{t=t_1}^{t=t_2} \nn\\
	=\,& \int_0^{t_1} F_j(\tau) \left.   \widetilde{E}_{\alpha,\alpha}(-m_j^\beta \omega^\alpha)\right|_{\omega=t_1-\tau}^{\omega=t_2-\tau} d\tau + \int_{t_1}^{t_2} F_j(\tau) \widetilde{E}_{\alpha,\alpha}(-m_j^\beta (t_2-\tau)^\alpha) d\tau \nn\\
	=\,& \int_0^{t_1} \int_{t_1-\tau}^{t_2-\tau}  F_j(\tau)\omega^{\alpha -2}E_{\alpha ,\alpha -1}(-m_j^\beta \omega^\alpha ) d\omega d\tau + \int_{t_1}^{t_2} F_j(\tau) \widetilde{E}_{\alpha,\alpha}(-m_j^\beta (t_2-\tau)^\alpha) d\tau,\nn
	\end{align}
	and
	$$\displaystyle \left.  {E_{\alpha,1}(-m_j^\beta t^\alpha)} \right|_{t=t_1}^{t=t_2}
	= -m_j^\beta \int_{t_1}^{t_2}   {\widetilde{E}_{\alpha ,\alpha  }(-m_j^\beta  \omega^\alpha )}  d\omega. $$
	Combining the above arguments gives
	\begin{align}
	\,& u(t_2,x)-u(t_1,x)\nn\\
	=\,&   \sum_{j=1}^\infty \int_0^{t_1} \int_{t_1-\tau}^{t_2-\tau}  F_j(\tau)\omega^{\alpha -2}E_{\alpha ,\alpha -1}(-m_j^\beta \omega^\alpha ) d\omega d\tau e_j(x) \nn\\
	+\,& \sum_{j=1}^\infty\int_{t_1}^{t_2}F_j(\tau) \widetilde{E}_{\alpha,\alpha}(-m_j^\beta (t_2-\tau)^\alpha) d\tau e_j(x) 
	-  \mathcal L^{\beta} \sum_{j=1}^\infty \varphi_j   \int_{t_1}^{t_2}  \frac{\widetilde{E}_{\alpha ,\alpha  }(-m_j^\beta  \omega^\alpha )}{E_{\alpha ,1}(-m_j^\beta  T^\alpha )} d\omega e_j(x)  \nn\\
	+ \,& \mathcal L^{\beta} \sum_{j=1}^\infty   \int_0^T  \int_{t_1}^{t_2}   F_j(\tau) \widetilde{E}_{\alpha,\alpha}(-m_j^\beta (T-\tau)^\alpha)  \frac{\widetilde{E}_{\alpha ,\alpha  }(-m_j^\beta  \omega^\alpha )}{E_{\alpha ,1}(-m_j^\beta  T^\alpha )} d\omega d\tau e_j(x)\nn\\
	:=\,& \mathcal I_1+\mathcal I_2 + \mathcal I_3  + \mathcal I_4.\label{ConFor}
	\end{align}
	Now, we will establish estimates for $\mathcal I_j$, $j=1,2,3,4$, and show that $\mathcal I_j$ tends to $0$ as $t_2 -t_1 \to 0$. Firstly, by the inequality (\ref{BasicMLInequality}), we see that the absolute value of $E_{\alpha ,\alpha -1}(-m_j^\beta \omega^\alpha )$ is bounded by $\widehat{c}_\alpha   m_j^{-\beta{p}} \omega^{-\alpha {p}}$. This implies
	\begin{align}
	\omega^{\alpha -2}\left| E_{\alpha ,\alpha -1}(-m_j^\beta \omega^\alpha ) \right| \le \widehat{c}_\alpha \omega^{\alpha q -2} m_j^{-\beta{p}}  .\nn
	\end{align}	
	Moreover, for $0<\tau<t_1$, we have  $$   \int_{t_1-\tau}^{t_2-\tau}  \omega^{\alpha {q} -2}  d\omega  =\frac{1}{1-\alpha {q}}\frac{(t_2-\tau)^{1-\alpha {q}}-(t_1-\tau)^{1-\alpha {q}} }{(t_1-\tau)^{1-\alpha {q}} (t_2-\tau)^{1-\alpha {q}} },$$
	where we note that the estimates
	$(t_2-\tau)^{1-\alpha {q}}-(t_1-\tau)^{1-\alpha {q}}  \le (t_2-t_1)^{1-\alpha {q}}$ and $(t_2-\tau)^{1-\alpha {q}} \ge (t_1-\tau)^{ s }(t_2-t_1)^{1-\alpha {q}- s }$ can be showed easily from $0<\alpha q < 1$, and $1-\alpha q -s >0$.
	Hence, we deduce
	\begin{align}
	\norm{\mathcal I_1}
	\le\,& \int_0^{t_1} \norm{ \sum_{j=1}^\infty  \int_{t_1-\tau}^{t_2-\tau}  F_j(\tau) \omega^{\alpha -2}E_{\alpha ,\alpha -1}(-m_j^\beta \omega^\alpha ) d\omega e_j}    d\tau \nn\\
	\le \,& \widehat{c}_\alpha m_1^{-\beta{p}} \int_0^{t_1}    \int_{t_1-\tau}^{t_2-\tau}  \omega^{\alpha q -2}    d\omega   \norm{F(\tau,.)} d\tau \nn\\
	\le \,& \frac{\widehat{c}_\alpha m_1^{-\beta{p}}}{1-\alpha {q}}  \int_0^{t_1} \norm{F(\tau,.)} (t_1-\tau)^{\alpha{q} -  s -1 }   d\tau (t_2-t_1)^{ s } . \nn
	\end{align}
	This leads to
	\begin{align}
	\,& \norm{\mathcal I_1}
	\le   M_9 \vertiii{F}_{\mathcal X_{2,\alpha{q}- s}} (t_2-t_1)^{ s }, \label{I1}
	\end{align}
	where we let $M_9=\frac{\widehat{c}_\alpha m_1^{-\beta{p}}}{1-\alpha {q}}$. Secondly, an estimate for the term $\mathcal I_2$ can be shown by using (\ref{BasicMLInequality}) as follows.
	\begin{align}
	\norm{\mathcal I_2}
	\le\,& \int_{t_1}^{t_2}\norm{\sum_{j=1}^\infty F_j(\tau) E_{\alpha,\alpha}(-m_j^\beta (t_2-\tau)^\alpha)  e_j} (t_2-\tau)^{\alpha-1} d\tau \nn\\
	\le\,& \widehat{c}_\alpha \int_{t_1}^{t_2}\norm{F(\tau,.)} (t_2-\tau)^{\alpha{q}- s -1}  (t_2-\tau)^{\alpha{p}+ s } d\tau \nn\\
	\le\,& M_{10}  \vertiii{F}_{\mathcal X_{2,\alpha{q}- s}} (t_2-t_1)^{ s }, \label{I2}
	\end{align}
	where we let $M_{10}=\widehat{c}_\alpha T^{\alpha p} $. Thirdly, we will estimate the term $\mathcal I_3$. We have
	$$\hspace*{-1.9cm} \displaystyle \norm{\mathcal I_3}  =\norm{\mathcal L^{\beta} \sum_{j=1}^\infty \varphi_j   \int_{t_1}^{t_2}  \frac{\widetilde{E}_{\alpha ,\alpha  }(-m_j^\beta  \omega^\alpha )}{E_{\alpha ,1}(-m_j^\beta  T^\alpha )} d\omega e_j}      . $$
	Here, the fraction can be estimated as follows
	\begin{align}
	\left|   \frac{\widetilde{E}_{\alpha ,\alpha  }(-m_j^\beta  \omega^\alpha )}{E_{\alpha ,1}(-m_j^\beta  T^\alpha )}  \right|  \le \widehat{c}_\alpha c_\alpha^{-1}   \left[\frac{1+m_j^\beta T^\alpha}{1+ m_j^\beta \omega^\alpha  }\right]^{{p}} \left[\frac{1+m_j^\beta T^\alpha}{1+ m_j^{2\beta} \omega^{2\alpha}  }\right]^{{q}}   \omega^{\alpha-1}  , \label{CI3aaa}
	\end{align}
	by  using (\ref{BasicMLInequality}) and  $\widetilde{E}_{\alpha ,\alpha  }(-m_j^\beta  \omega^\alpha )=E_{\alpha ,\alpha  }(-m_j^\beta  \omega^\alpha ) \omega^{\alpha-1}$. Moreover, we can see that
	$$\frac{1+m_j^\beta T^\alpha}{1+ m_j^{2\beta} \omega^{2\alpha}  } \le (m_1^{-\beta}+T^\alpha) m_j^\beta m_j^{-2\beta} \omega^{-2\alpha}\le  (m_1^{-\beta}+T^\alpha) m_j^{-\beta}\omega^{-2\alpha}.$$
	Taking these estimates together, we thus obtain the following chain of the inequalities
	\begin{align}
	\norm{\mathcal I_3}
	\le\,& \left\{ \sum_{j=1}^\infty \varphi_j^2 m_j^{2\beta} \left[ \int_{t_1}^{t_2}  \left| \frac{\widetilde{E}_{\alpha ,\alpha  }(-m_j^\beta  \omega^\alpha )}{E_{\alpha ,1}(-m_j^\beta  T^\alpha )} \right| d\omega \right]^2 \right\}^{1/2} \nn\\
	\le\,& M_{11} \left\{ \sum_{j=1}^\infty \varphi_j^2 m_j^{2\beta} \left[\int_{t_1}^{t_2} \omega^{-\alpha{p}} m_j^{-\beta{q}} \omega^{-2\alpha{q}} \omega^{\alpha-1} d\omega  \right]^2  \right\}^{1/2} \nn\\
	\le\,& M_{11} \left\{ \sum_{j=1}^\infty \varphi_j^2 m_j^{2\beta{p}} \left[\int_{t_1}^{t_2}     \omega^{-\alpha{q}-1}   d\omega  \right]^2  \right\}^{1/2}  ,  \nn
	\end{align}
	which implies that
	\begin{align}
	\norm{\mathcal I_3}
	\le M_{12} t_1^{-2\alpha q} (t_2-t_1)^{\alpha q}   \norm{\varphi}_{{\bf V}_{\beta{p}}},  \label{I3}
	\end{align}
	where $M_{11}= \widehat{c}_\alpha c_\alpha^{-1} T^{\alpha{p}} (m_1^{-\beta}+T^\alpha)^{{q}}$, and $M_{12}= M_{11} [\alpha {q}]^{-1} $. Fourthly, we proceed to estimate $\mathcal I_4$. According to (\ref{CI3aaa}), we have
	$ \left|   \frac{\widetilde{E}_{\alpha ,\alpha  }(-m_j^\beta  \omega^\alpha )}{E_{\alpha ,1}(-m_j^\beta  T^\alpha )}  \right| \le M_{11} m_j^{-\beta {q}} \omega^{-\alpha{q}-1}   $. Moreover, $\widetilde{E}_{\alpha,\alpha}(-m_j^\beta (T-\tau)^\alpha)   \le \widehat{c}_\alpha m_j^{-\beta {p}} (T-\tau)^{\alpha{q}-1}$  can be established by using the inequalities (\ref{BasicMLInequality}). Hence, we obtain
	\begin{align}
	\norm{\mathcal I_4}
	\le \,&\int_0^T \norm{\mathcal L^\beta\sum_{j=1}^\infty   \int_{t_1}^{t_2}   F_j(\tau) \widetilde{E}_{\alpha,\alpha}(-m_j^\beta (T-\tau)^\alpha)  \frac{\widetilde{E}_{\alpha ,\alpha  }(-m_j^\beta  \omega^\alpha )}{E_{\alpha ,1}(-m_j^\beta  T^\alpha )} d\omega   e_j}  d\tau  \nn\\
	\le\,& \int_0^T \left\{  \sum_{j=1}^\infty   m_j^{2\beta}F_j^2(\tau) \left[ \int_{t_1}^{t_2}   \left| \widetilde{E}_{\alpha,\alpha}(-m_j^\beta (T-\tau)^\alpha)  \frac{\widetilde{E}_{\alpha ,\alpha  }(-m_j^\beta  \omega^\alpha )}{E_{\alpha ,1}(-m_j^\beta  T^\alpha )} \right| d\omega \right]^2  \right\}^{1/2}  d\tau  \nn\\
	\le\,& \widehat{c}_\alpha M_{11} \int_0^T \left\{  \sum_{j=1}^\infty   m_j^{2\beta}F_j^2(\tau) \left[ \int_{t_1}^{t_2}   m_j^{-\beta {p}} (T-\tau)^{\alpha{q}-1}   m_j^{-\beta {q}} \omega^{-\alpha{q}-1}   d\omega \right]^2  \right\}^{1/2}  d\tau  \nn\\
	\le\,& \widehat{c}_\alpha M_{11} \frac{t_2^{\alpha {q}} - t_1^{\alpha {q}}}{t_1^{2\alpha {q}}} \int_0^T  \norm{F(\tau,.)}   (T-\tau)^{\alpha{q}-1}   d\tau , \nn
	\end{align}
	and we arrive at
	\begin{align}
	\norm{\mathcal I_4} \le \widehat{c}_\alpha M_{13} t_1^{-2\alpha q} (t_2-t_1)^{\alpha q}  \vertiii{F}_{\mathcal X_{2,\alpha{q}- s}}, \label{I4}
	\end{align}
	where $M_{13}=M_{11}T^s$. We deduce from (\ref{I1}), (\ref{I2}), \ref{I3}), (\ref{I4}) that  $\norm{\sum_{1\le j\le 4} \mathcal I_j}$ tends to $0$ as  $t_2-t_1$ tends $0$ for  $0<t_1<t_2\le T$. Thus, $u$ belongs to the set $C((0,T];L^2(D))$. On the other hand, by  assumption (R3), we have $0<\alpha q -s < \alpha q$ and
	$ \mathcal X _{2,\alpha{q}-s}(J\times D) \subset \mathcal X _{2,\alpha{q}}(J\times D).$ Therefore, the assumptions on $\varphi$ and $F$ in this theorem also fulfill Lemma \ref{Lemma1}. Hence, the
	inequality (\ref{RLemma1}) holds, i.e.,
	\begin{align}
	t^{\alpha{q}}\norm{u(t,.)}   \le M_4\left(\norm{\varphi}_{{\bf V}_{\beta{p}}} + \vertiii{F}_{2,\alpha{q}}\right), \quad t>0. \label{Casignom}
	\end{align}
	This implies $u$ belongs to $C^{\alpha{q}}((0,T];L^2(D))$. Moreover, by taking the supremum on both sides of (\ref{Casignom}) on $(0,T]$, we obtain
	\begin{align}
	\norm{u}_{C^{\alpha q}(0,T;L^2(D))}   \le\,& M_4\norm{\varphi}_{{\bf V}_{\beta{p}}} + T^s M_4\vertiii{F}_{\mathcal X_{2,\alpha{q}-s}}.  \label{RTheo2bbb}
	\end{align}
	
	\vspace*{0.2cm}
	\noindent {\bf Step 3:} We prove $u\in C^s([0,T];{\bf V}_{-\beta {q}'})$. In this step, we establish the continuity of the solution on the closed interval $[0,T]$. Now, we consider $0\le t_1 <t_2\le T$. If $t_1=0$, then $\mathcal I_1=0$. If $t_1>0$, then  combining (\ref{ggIP}) in the same way as in (\ref{I1}) gives  	
	\begin{align}
	\norm{\mathcal I_1}_{{\bf V}_{-\beta {q}'}}
	\le\,& \int_0^{t_1} \norm{ \sum_{j=1}^\infty  \int_{t_1-\tau}^{t_2-\tau}  F_j(\tau)\omega^{\alpha -2}E_{\alpha ,\alpha -1}(-m_j^\beta \omega^\alpha ) d\omega e_j}_{{\bf V}_{-\beta {q}'}}    d\tau \nn\\
	\le\,& \int_0^{t_1} \left\{ \sum_{j=1}^\infty m_j^{ -2\beta {q}' } \left( \int_{t_1-\tau}^{t_2-\tau}  F_j(\tau)\omega^{\alpha -2}E_{\alpha ,\alpha -1}(-m_j^\beta \omega^\alpha ) d\omega e_j, e_j \right)_{-\beta q',\beta q'}^2 \right\}^{1/2} d\tau \nn\\
	\le \,& \int_0^{t_1} \left\{ \sum_{j=1}^\infty m_j^{ -2\beta {q}' } F_j^2(\tau) \left| \int_{t_1-\tau}^{t_2-\tau}  \omega^{\alpha -2} \left| E_{\alpha ,\alpha -1}(-m_j^\beta \omega^\alpha )\right| d\omega \right|^2 \right\}^{1/2}    d\tau  , \nn
	\end{align}
	and so
	\begin{align}
	\norm{\mathcal I_1}_{{\bf V}_{-\beta {q}'}}
	\le \,&  \frac{\widehat{c}_\alpha m_1^{-\beta{q}'}}{1-\alpha {q}}  (t_2-t_1)^{ s } \vertiii{F}_{\mathcal X_{2,\alpha{q}- s}}.  \label{I1aaa}
	\end{align}
	On the other hand, the inequality (\ref{I2}) also holds for all $0\le t_1 < t_2 \le T$. Hence, the same way as in the proof (\ref{I2}) shows that
	\begin{align}
	\norm{\mathcal I_2}_{{\bf V}_{-\beta {q}'}}
&	\le \int_{t_1}^{t_2}\norm{\sum_{j=1}^\infty   F_j(\tau) E_{\alpha,\alpha}(-m_j^\beta (t_2-\tau)^\alpha)  e_j}_{{\bf V}_{-\beta {q}'}} (t_2-\tau)^{\alpha-1} d\tau \nn\\
	&\le m_1^{-\beta {q}'} \widehat{c}_\alpha (t_2-t_1)^{\alpha{p}+ s } \vertiii{F}_{\mathcal X_{2,\alpha{q}- s}}  \nn\\
	&\le m_1^{-\beta {q}'} \widehat{c}_\alpha T^{\alpha p} (t_2-t_1)^{s} \vertiii{F}_{\mathcal X_{2,\alpha{q}- s}} . \nn
	\end{align}
	Now, we will establish estimates for $\norm{I_3}_{{\bf V}_{-\beta {q}'}}$ and $\norm{I_4}_{{\bf V}_{-\beta {q}'}}$. Indeed, we have
	\begin{align}
	\left|   \frac{\widetilde{E}_{\alpha ,\alpha  }(-m_j^\beta  \omega^\alpha )}{E_{\alpha ,1}(-m_j^\beta  T^\alpha )}  \right|  \le\,& \widehat{c}_\alpha c_\alpha^{-1}   \left[\frac{1+m_j^\beta T^\alpha}{1+ m_j^\beta \omega^\alpha  }\right]^{{p}-{p}'} \left[\frac{1+m_j^\beta T^\alpha}{1+ m_j^{\beta} \omega^{\alpha}  }\right]^{{q}+{p}'}   \omega^{\alpha-1}  , \nn\\
	\le\,& \widehat{c}_\alpha c_\alpha^{-1}  (m_1^{-\beta}+T^\alpha)^{p-p'} m_j^{\beta (p-p')} T^{\alpha ({q}+{p}')}\omega^{-\alpha ({q}+{p}')} \omega^{\alpha-1} .\nn
	\end{align}
	Thus, we can derive 
	\[
	\left|   \frac{\widetilde{E}_{\alpha ,\alpha  }(-m_j^\beta  \omega^\alpha )}{E_{\alpha ,1}(-m_j^\beta  T^\alpha )}  \right| \le  M_{14} m_j^{\beta ({p}-{p}')} \omega^{\alpha ({p}-{p}') -1 },
	\]
	 where we let $$M_{14}= \widehat{c}_\alpha c_\alpha^{-1} (m_1^{-\beta}+T^\alpha)^{{p}-{p}'} T^{\alpha ({q}+{p}')}. $$
	 This leads to
	\begin{align}
	\norm{I_3}_{{\bf V}_{-\beta {q}'}}
	\le\,& \left\{ \sum_{j=1}^\infty \varphi_j^2 m_j^{2\beta} m_j^{-2\beta {q}'} \left[ \int_{t_1}^{t_2}  \left| \frac{\widetilde{E}_{\alpha ,\alpha  }(-m_j^\beta  \omega^\alpha )}{E_{\alpha ,1}(-m_j^\beta  T^\alpha )} \right| d\omega \right]^2 \right\}^{1/2} \nn\\
	\le\,& M_{14} \left\{ \sum_{j=1}^\infty \varphi_j^2 m_j^{2\beta} m_j^{-2\beta {q}'}  \left[\int_{t_1}^{t_2} m_j^{\beta ({p}-{p}')} \omega^{\alpha ({p}-{p}') -1 } d\omega  \right]^2  \right\}^{1/2} \nn\\
	\le\,& M_{14} \left\{ \sum_{j=1}^\infty \varphi_j^2 m_j^{2\beta{p}} \left[\int_{t_1}^{t_2}     \omega^{\alpha ({p}-{p}') -1 }  d\omega  \right]^2 \right\}^{1/2} .  \nn
	\end{align}
	Thus, by letting $ M_{15} = \frac{M_{14}}{\alpha({p}-{p}')}T^{\alpha(p-p')-s}$, we obtain the estimate
	\begin{align}
	\norm{I_3}_{{\bf V}_{-\beta {q}'}}  &\le  \frac{M_{14}}{\alpha({p}-{p}')} \norm{\varphi}_{{\bf V}_{\beta p}}  \left(t_2^{\alpha(p-p')}-t_1^{\alpha(p-p')} \right)\nn\\  &\le   M_{15} \norm{\varphi}_{{\bf V}_{\beta p}}  (t_2-t_1)^{s}  .  \label{I3COT}
	\end{align}
	where we have used that $$ t_2^{\alpha(p-p')}-t_1^{\alpha(p-p')} \le (t_2-t_1)^{\alpha(p-p')}\le T^{\alpha(p-p')-s} (t_2-t_1)^{s}$$ for $p'\le  p -\frac{s}{\alpha}$.  By  the same way as in (\ref{I3COT}), we have
	\begin{align}
	\norm{\mathcal I_4}_{{\bf V}_{-\beta {q}'}} 
	\le\,& \int_0^T \norm{\mathcal L^\beta\sum_{j=1}^\infty   \int_{t_1}^{t_2}   F_j(\tau) \widetilde{E}_{\alpha,\alpha}(-m_j^\beta (T-\tau)^\alpha)  \frac{\widetilde{E}_{\alpha ,\alpha  }(-m_j^\beta  \omega^\alpha )}{E_{\alpha ,1}(-m_j^\beta  T^\alpha )} d\omega   e_j}_{{\bf V}_{-\beta {q}'}}  d\tau  \nn\\
	\le\,& \int_0^T \left\{  \sum_{j=1}^\infty   m_j^{2\beta p'}  F_j^2(\tau) \left[ \int_{t_1}^{t_2}   \left| \widetilde{E}_{\alpha,\alpha}(-m_j^\beta (T-\tau)^\alpha)  \frac{\widetilde{E}_{\alpha ,\alpha  }(-m_j^\beta  \omega^\alpha )}{E_{\alpha ,1}(-m_j^\beta  T^\alpha )} \right| d\omega \right]^2  \right\}^{1/2}  d\tau  \nn\\
	\le\,& \widehat{c}_\alpha M_{14} \int_0^T \left\{  \sum_{j=1}^\infty    m_j^{2\beta p'} F_j^2(\tau) \left[ \int_{t_1}^{t_2}     (T-\tau)^{\alpha{q}-1}   m_j^{-\beta {p}' } \omega^{\alpha ({p}-{p}') -1 }   d\omega \right]^2  \right\}^{1/2}  d\tau  \nn\\
	\le\,& \widehat{c}_\alpha \frac{M_{14}}{\alpha({p}-{p}')}  \left(t_2^{\alpha(p-p')}-t_1^{\alpha(p-p')} \right) \int_0^T  \norm{F(\tau,.)}   (T-\tau)^{\alpha{q}-1}   d\tau ,  \nn
	\end{align}
	where $$|\widetilde{E}_{\alpha,\alpha}(-m_j^\beta (T-\tau)^\alpha)|\le m_j^{-\beta p}(T-\tau)^{\alpha q -1}.$$
		This implies that
	\begin{align}
	\norm{\mathcal I_4}_{{\bf V}_{-\beta {q}'}}
	\le  M_{16} (t_2 -t_1)^{s}  \vertiii{F}_{\mathcal X_{2,\alpha q -s}} , \label{I4COT}
	\end{align}		
	where $M_{16}=\widehat{c}_\alpha T^s M_{15}$. Combining the above arguments guarantees that $u$ belongs to $C^s([0,T];$ ${\bf V}_{-\beta {q}' })$. Moreover, there also exists a positive constant $M_{17}$ such that
	\begin{align}
	\vertiii{u}_{C^s([0,T];{\bf V}_{-\beta {q}' })}   \le M_{17} \norm{\varphi}_{{\bf V}_{\beta p}} + M_{17} \vertiii{F}_{\mathcal X_{2,\alpha q -s}}.\label{RTheo2ccc}
	\end{align}
	Finally,	the inequality (\ref{RTheo2}) is obtained by taking the inequality (\ref{RTheo2aaa}), (\ref{RTheo2bbb}) and (\ref{RTheo2ccc}) together.  The proof is complete.
\end{proof}

In the next theorem, we will investigate the time-space fractional derivative of the mild solution $u$. More specifically, we investigate ${}^{c}D_t^\alpha \mathcal L^{-\beta(q-\widehat{q})}u$, for a suitably chosen number $\widehat{q}\le q$. We also establish the continuity of ${}^{c}D_t^\alpha \mathcal L^{-\beta q}u$ on the interval $(0,T]$  without establishing it at $t=0$ since this requires a strong assumption of $F$, for example, $F$ must be continuous on whole interval $[0,T]$.

\begin{theorem}\label{STheo2} \hspace*{20cm}\\
	\noindent a) Let $p,q,s,p',q',\widehat{p},\widehat{q},r,\widehat{r}$ be defined by (R1), (R3), (R4), (R5). If $\varphi\in {\bf V}_{\beta({p}+\widehat{q})}$, and  $F\in L^{\frac{1}{\alpha q -s}+\widehat{r}}(0,T;L^2(D))$, then  FVP (\ref{LINEAR}) has a unique solution $u$ such that
	\begin{align}
	\,& u\in L^{\frac{1}{\alpha q'}-r}(0,T;{\bf V}_{\beta({p}- p' )})\cap C^{\alpha{q}}((0,T];L^2(D)) \cap C^{s}([0,T];{\bf V}_{-\beta {q}'}), \nn\\
	\,& \hspace*{2.8cm} {}^{c}D_t^\alpha u \in L^{\frac{1}{\alpha }-\widehat{r}}(0,T;{\bf V}_{-\beta(q-\widehat{q}) })  .\nn
	\end{align}
	Moreover, there exists a positive constant $C_3$ such that
	\begin{align}
	\norm{{}^{c}D_t^\alpha u }_{L^{\frac{1}{\alpha}-\widehat{r}}(0,T;{\bf V}_{-\beta(q-\widehat{q}) })}
	\le\,&  C_3 \norm{F}_{L^{\frac{1}{\alpha q -s } + \widehat{r}}(0,T;L^2(D))}    + C_3 \norm{\varphi}_{{\bf V}_{\beta(p+\widehat{q})}} . \label{RTheo2a}
	\end{align}	
	
	\vspace*{0.2cm}
	
	\noindent b) Let $p,q,s,p',q',\widehat{p},\widehat{q},r,\widehat{r}$ be defined by (R1), (R3), (R4), (R5). If $\varphi\in {\bf V}_{\beta({p}+\widehat{q})}$, and  $F\in L^{\frac{1}{\alpha q -s}+\widehat{r}}(0,T;L^2(D))\cap C^\alpha((0,T];V_{-\beta q}) $, then  FVP (\ref{LINEAR}) has a unique solution $u$ such that
	\begin{align}
	\,& u\in L^{\frac{1}{\alpha q'}-r}(0,T;{\bf V}_{\beta({p}- p' )})\cap C^{\alpha{q}}((0,T];L^2(D)) \cap C^{s}([0,T];{\bf V}_{-\beta {q}'}), \nn\\
	\,& \hspace*{1.3cm} {}^{c}D_t^\alpha u \in L^{\frac{1}{\alpha }-\widehat{r}}(0,T;{\bf V}_{-\beta(q-\widehat{q}) }) \cap C^\alpha((0,T];V_{-\beta q}) .\nn
	\end{align}
	Moreover, there exists a positive constant $C_4$ such that
	\begin{align}
	\,& \norm{{}^{c}D_t^\alpha u }_{L^{\frac{1}{\alpha}-\widehat{r}}(0,T;{\bf V}_{-\beta(q-\widehat{q}) })} + \norm{{}^{c}D_t^\alpha u}_{C^\alpha((0,T];{\bf V}_{-\beta q})} \nn\\
	\le\,& C_4 \norm{\varphi}_{{\bf V}_{\beta (p+\widehat{q}) }}  +     C_4 \norm{F}_{L^{\frac{1}{\alpha q -s } + \widehat{r}}(0,T;L^2(D))}  +  C_4\norm{F}_{C^{\alpha  }((0,T];{\bf V}_{-\beta {q}})} . \label{RTheo2b}
	\end{align}	
\end{theorem}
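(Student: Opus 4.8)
The plan is to reduce both parts to a single pointwise estimate, $\norm{u(t,\cdot)}_{{\bf V}_{\beta(p+\widehat q)}}\lesssim t^{-\alpha}$, together with the modewise identity ${}^{c}D_t^\alpha u(t,x)=F(t,x)-\mathcal L^\beta u(t,x)$. Indeed, each Fourier coefficient $u_j$ was built in Subsection 2.2 so that ${}^{c}D_t^\alpha u_j(t)=F_j(t)-m_j^\beta u_j(t)$; summing against $e_j$ gives this identity, which I read in ${\bf V}_{-\beta(q-\widehat q)}$. The key bookkeeping is the isometry $\norm{\mathcal L^\beta v}_{{\bf V}_{-\beta(q-\widehat q)}}=\norm{\mathcal L^{\beta(1-q+\widehat q)}v}=\norm{v}_{{\bf V}_{\beta(p+\widehat q)}}$ (using $p+q=1$), so that controlling $\mathcal L^\beta u$ in the negative-order space is exactly controlling $u$ in ${\bf V}_{\beta(p+\widehat q)}$. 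I also note that $\widehat q\ge 0$ forces $\varphi\in{\bf V}_{\beta(p+\widehat q)}\subset{\bf V}_{\beta p}$ and $F\in L^{1/(\alpha q-s)+\widehat r}(0,T;L^2(D))\subset\mathcal X_{2,\alpha q-s}(J\times D)$, so the hypotheses of Theorem \ref{STheo1}b) are met; this yields immediately the stated membership of $u$ in $L^{1/(\alpha q')-r}(0,T;{\bf V}_{\beta(p-p')})\cap C^{\alpha q}((0,T];L^2(D))\cap C^{s}([0,T];{\bf V}_{-\beta q'})$, so only the statements on ${}^{c}D_t^\alpha u$ remain to be proved.

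\textbf{Part a).} I first establish $\norm{u(t,\cdot)}_{{\bf V}_{\beta(p+\widehat q)}}\lesssim t^{-\alpha}$ by splitting $u=\mathcal O_1F+\mathcal O_2\varphi+\mathcal O_3F$ as in Step 1 of Theorem \ref{STheo1}. For the $\mathcal O_2\varphi$ and $\mathcal O_3F$ terms I use that (\ref{BasicMLInequality}) gives the uniform-in-$j$ symbol bound $\frac{E_{\alpha,1}(-m_j^\beta t^\alpha)}{E_{\alpha,1}(-m_j^\beta T^\alpha)}\le\widehat c_\alpha c_\alpha^{-1}\frac{1+m_j^\beta T^\alpha}{1+m_j^\beta t^\alpha}\le\widehat c_\alpha c_\alpha^{-1}(T/t)^\alpha$, whence $\mathcal O_2(t,\cdot)$ is bounded on every scale ${\bf V}_\gamma$ with norm $\lesssim t^{-\alpha}$; this handles $\mathcal O_2\varphi$ (here $\varphi\in{\bf V}_{\beta(p+\widehat q)}$ is used) and, writing $\mathcal O_3=-\mathcal O_2\mathcal O_1(T,\cdot)$, also $\mathcal O_3F$. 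For $\mathcal O_1F$ I use $p+\widehat q\le 1$ (from $\widehat q\le q$) with the first half of (\ref{BasicMLInequality}) to get $m_j^{\beta(p+\widehat q)}|E_{\alpha,\alpha}(-m_j^\beta\omega^\alpha)|\le\widehat c_\alpha\omega^{-\alpha(p+\widehat q)}$, reducing the term to the convolution $\int_0^t\norm{F(\tau,\cdot)}(t-\tau)^{\alpha(q-\widehat q)-1}d\tau$; by (\ref{Fset})--(\ref{Holder}) this is $\lesssim\vertiii{F}_{\mathcal X_{2,\alpha(q-\widehat q)}}\lesssim\norm{F}_{L^{1/(\alpha q-s)+\widehat r}}$, the inclusion being legitimate because $\widehat q\le s/\alpha$ gives $\alpha(q-\widehat q)\ge\alpha q-s$, hence $1/(\alpha q-s)+\widehat r>1/(\alpha(q-\widehat q))$. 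Collecting the three bounds gives $\norm{u(t,\cdot)}_{{\bf V}_{\beta(p+\widehat q)}}\lesssim t^{-\alpha}\big(\norm{\varphi}_{{\bf V}_{\beta(p+\widehat q)}}+\norm{F}_{L^{1/(\alpha q-s)+\widehat r}}\big)$. Finally, from ${}^{c}D_t^\alpha u=F-\mathcal L^\beta u$ I bound, in ${\bf V}_{-\beta(q-\widehat q)}$, $\norm{{}^{c}D_t^\alpha u(t,\cdot)}_{{\bf V}_{-\beta(q-\widehat q)}}\le\norm{F(t,\cdot)}+\norm{u(t,\cdot)}_{{\bf V}_{\beta(p+\widehat q)}}$ (using $L^2(D)\hookrightarrow{\bf V}_{-\beta(q-\widehat q)}$), and take the $L^{1/\alpha-\widehat r}(0,T)$-norm: the $F$-part is controlled since $1/(\alpha q-s)+\widehat r>1/\alpha-\widehat r$ yields $L^{1/(\alpha q-s)+\widehat r}(0,T)\subset L^{1/\alpha-\widehat r}(0,T)$, while the $u$-part is controlled because $\widehat r>0$ makes $t^{-\alpha}\in L^{1/\alpha-\widehat r}(0,T)$. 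This proves (\ref{RTheo2a}).

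\textbf{Part b).} The additional statement concerns the weighted space $C^\alpha((0,T];{\bf V}_{-\beta q})$, i.e. $\norm{v}_{C^\alpha((0,T];{\bf V}_{-\beta q})}=\sup_{0<t\le T}t^\alpha\norm{v(t)}_{{\bf V}_{-\beta q}}$ (half-open interval), not a H\"older seminorm. Using again ${}^{c}D_t^\alpha u=F-\mathcal L^\beta u$ and the isometry $\norm{\mathcal L^\beta u}_{{\bf V}_{-\beta q}}=\norm{u}_{{\bf V}_{\beta p}}$, I bound $t^\alpha\norm{{}^{c}D_t^\alpha u(t,\cdot)}_{{\bf V}_{-\beta q}}\le t^\alpha\norm{F(t,\cdot)}_{{\bf V}_{-\beta q}}+t^\alpha\norm{u(t,\cdot)}_{{\bf V}_{\beta p}}$. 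The first summand is at most $\norm{F}_{C^\alpha((0,T];{\bf V}_{-\beta q})}$ by definition, supplying that term in (\ref{RTheo2b}). For the second, the embedding ${\bf V}_{\beta(p+\widehat q)}\hookrightarrow{\bf V}_{\beta p}$ (since $\widehat q\ge 0$) and the Part-a) estimate give $\norm{u(t,\cdot)}_{{\bf V}_{\beta p}}\le\norm{u(t,\cdot)}_{{\bf V}_{\beta(p+\widehat q)}}\lesssim t^{-\alpha}\big(\norm{\varphi}_{{\bf V}_{\beta(p+\widehat q)}}+\norm{F}_{L^{1/(\alpha q-s)+\widehat r}}\big)$, so $t^\alpha\norm{u(t,\cdot)}_{{\bf V}_{\beta p}}$ is uniformly bounded on $(0,T]$. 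Taking the supremum yields (\ref{RTheo2b}).

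The main obstacle is not any single inequality but the simultaneous fitting of the indices: one must verify that the modewise identity ${}^{c}D_t^\alpha u=F-\mathcal L^\beta u$ genuinely converges in ${\bf V}_{-\beta(q-\widehat q)}$ (which is guaranteed once $\norm{u(t,\cdot)}_{{\bf V}_{\beta(p+\widehat q)}}<\infty$ is known), and that the three constraints extracted above are exactly the content of (R5): $\widehat q\le s/\alpha$ for the $\mathcal X$-space inclusion, $\widehat q<q$ for integrability of the kernel $(t-\tau)^{\alpha(q-\widehat q)-1}$, and $\widehat r>0$ together with $1/(\alpha q-s)+\widehat r>1/\alpha-\widehat r$ for the temporal integrability. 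A secondary subtlety, easy to overlook, is reading $C^{\alpha q}((0,T];\cdot)$ and $C^\alpha((0,T];\cdot)$ as the weighted spaces $\sup_{0<t\le T}t^\rho\norm{\cdot}$ rather than H\"older spaces, which is precisely what makes the pointwise $t^{-\alpha}$ decay sufficient for the $C^\alpha((0,T];{\bf V}_{-\beta q})$ claim.
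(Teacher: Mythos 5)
Your Part a) is correct and, despite the repackaging, is essentially the paper's own argument: the three terms $\psi_j^{(1)},\psi_j^{(2)},\psi_j^{(3)}$ that the paper estimates are exactly the Fourier modes of $-\mathcal L^\beta u$, so your identity ${}^{c}D_t^\alpha u=F-\mathcal L^\beta u$ together with the observation $\norm{\mathcal L^\beta v}_{{\bf V}_{-\beta(q-\widehat q)}}=\norm{v}_{{\bf V}_{\beta(p+\widehat q)}}$ reproduces the estimates (\ref{psi1})--(\ref{psi3}) in a slightly cleaner form, and your index checks ($\widehat q\le s/\alpha$ for the $\mathcal X$-space inclusion, $\widehat r>0$ for $t^{-\alpha}\in L^{1/\alpha-\widehat r}(0,T)$) match (\ref{mid}) and (\ref{down}). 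The only point you wave at rather than prove is the convergence of the modewise series defining ${}^{c}D_t^\alpha u$, but the paper treats this at the same level of detail, so I do not count it against you.

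Part b), however, has a genuine gap. The paper defines $C^{\rho}((0,T];B)$ as the set of functions \emph{belonging to $C((0,T];B)$} for which $\sup_{0<t\le T}t^\rho\norm{v(t)}_B$ is finite; membership therefore requires continuity on $(0,T]$ in addition to the weighted bound, and you prove only the latter. This is not a formality. Via your own identity, continuity of ${}^{c}D_t^\alpha u$ into ${\bf V}_{-\beta q}$ is equivalent to continuity of $F$ into ${\bf V}_{-\beta q}$ (which is a hypothesis) \emph{plus} continuity of $t\mapsto u(t,\cdot)$ into ${\bf V}_{\beta p}$, and the latter is not available from anything you have established: Theorem \ref{STheo1}b) gives continuity of $u$ only into $L^2(D)$, and your pointwise bound $\norm{u(t,\cdot)}_{{\bf V}_{\beta(p+\widehat q)}}\lesssim t^{-\alpha}$ gives boundedness, not continuity, in the stronger scale (an interpolation argument would require $\widehat q>0$, which (R5) does not guarantee). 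This missing continuity is precisely the content of the paper's estimates of $\mathcal J_1,\dots,\mathcal J_4$, i.e.\ of $\mathcal L^\beta\big(u(t_2,\cdot)-u(t_1,\cdot)\big)$ in ${\bf V}_{-\beta q}$, equivalently of $u(t_2,\cdot)-u(t_1,\cdot)$ in ${\bf V}_{\beta p}$; these occupy most of the proof of Part b) and your argument omits them entirely. You would need to supply such difference estimates (or an equivalent proof that $u\in C((0,T];{\bf V}_{\beta p})$) to complete the proof.
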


\begin{proof} a) By (R5), we have $0<\alpha q -s<1$, and $\frac{1}{\alpha q- s}+\widehat{r} > \frac{1}{\alpha q -s}>1$. Thus, one can deduce from (\ref{Holder})
	that
	\begin{align}
	L^{\frac{1}{\alpha q -s}+\widehat{r}}(0,T;L^2(D)) \subset \mathcal X _{2,\alpha q -s} (J\times D) ,\label{inclu1}
	\end{align}
	where we have used the inclusion (\ref{inclusion}). Moreover, the Sobolev embedding
	$
	{\bf V}_{\beta(p+\widehat{q})} \hookrightarrow {\bf V}_{\beta p}, $
	holds. Therefore, the assumptions of this theorem also fulfills Part b) of Theorem \ref{STheo1}. Hence, FVP (\ref{LINEAR}) has a unique solution
	\begin{align}
	\,& u\in L^{\frac{1}{\alpha q'}-r}(0,T;{\bf V}_{\beta({p}- p' )})\cap C^{\alpha{q}}((0,T];L^2(D)) \cap C^{s}([0,T];{\bf V}_{-\beta {q}'}). \nn
	\end{align}
	Now, we   prove   ${}^{c}D_t^\alpha u$ exists and belongs to  $L^{\frac{1}{\alpha  }-\widehat{r}}(0,T;{\bf V}_{-\beta(q-\widehat{q}) }) \cap C^{\alpha}((0,T];{\bf V}_{-\beta {q}}).$
	It follows from the identities
	\begin{align}
	{}^{c}D_t^\alpha  E_{\alpha ,1}(-m_j^\beta t^\alpha )  =\,& -m_j^\beta  E_{\alpha ,1 }(-m_j^\beta t^\alpha ), \nn\\
	{}^{c}D_t^\alpha \widetilde{E}_{\alpha ,\alpha  }(-m_j^\beta t^\alpha ) =\,& -m_j^\beta \widetilde{E}_{\alpha ,\alpha}(-m_j^\beta t^\alpha ),\nn
	\end{align}	
	see, for example \cite{Samko,Podlubny,Diethelm},
	and Equation (\ref{uj}) that
	\begin{align}
	{}^{c}D_t^\alpha u_j(t)
	=  \,& {}^{c}D_t^\alpha \left[ F_j(t)\star \widetilde{E}_{\alpha,\alpha}(-m_j^\beta t^\alpha) \right]  +  \Big[\varphi_j - F_j(T)\star \widetilde{E}_{\alpha,\alpha}(-m_j^\beta T^\alpha)  \Big] \frac{{}^{c}D_t^\alpha E_{\alpha,1}(-m_j^\beta t^\alpha)}{E_{\alpha,1}(-m_j^\beta T^\alpha)}    \nn\\
	=\,& F_j(t)  -m_j^\beta F_j(t)\star \widetilde{E}_{\alpha,\alpha}(-m_j^\beta t^\alpha)     -  \varphi_j  \frac{m_j^\beta E_{\alpha,1}(-m_j^\beta t^\alpha)}{E_{\alpha,1}(-m_j^\beta T^\alpha)} \nn\\
	+\,& F_j(T)\star \widetilde{E}_{\alpha,\alpha}(-m_j^\beta T^\alpha) \frac{m_j^\beta E_{\alpha,1}(-m_j^\beta t^\alpha)}{E_{\alpha,1}(-m_j^\beta T^\alpha)} := F_j(t)+\psi_j^{(1)}(t)+\psi_j^{(2)}(t)+\psi_j^{(3)}(t), \nn
	\end{align}
	for all $j\in \mathbb{N}$. Firstly, let us consider the sum $\sum_{n_1\le j\le n_2} \psi^{(1)}_j(t)e_j$, for  $n_1,n_2\in \mathbb{N}$, $1\le n_1<n_2$. By the definition of the dual space ${\bf V}_{-\beta(q-\widehat{q}) }$ of ${\bf V}_{\beta(q-\widehat{q}) }$, and the identity (\ref{ggIP}) of their dual inner product,  we have
	\begin{align}
	\,& \hspace*{0.53cm} \norm{\sum_{n_1\le j\le n_2} \psi^{(1)}_j(t)e_j}_{{\bf V}_{-\beta(q-\widehat{q}) }} \nn\\
	\le\,& \int_0^t \norm{   \sum_{n_1\le j\le n_2} m_j^\beta F_j(\tau) \widetilde{E}_{\alpha,\alpha}(-m_j^\beta (t-\tau)^\alpha) e_j }_{{\bf V}_{-\beta(q-\widehat{q}) }} d\tau \nn\\
	\le  \,& \int_0^t \left\{ \sum_{i=1}^\infty m_i^{2\beta (p+\widehat{q}) } \left( \sum_{n_1\le j\le n_2} F_j(\tau) \widetilde{E}_{\alpha,\alpha}(-m_j^\beta (t-\tau)^\alpha) e_j , e_i \right)_{-\beta(q-\widehat{q}), \beta(q-\widehat{q})}^2  \right\}^{1/2} d\tau \nn\\
	\le\,& \int_0^t  \left\{ \sum_{n_1\le j\le n_2} m_j^{2\beta (p+\widehat{q}) } F_j^2(\tau) \widetilde{E}^2_{\alpha,\alpha}(-m_j^\beta (t-\tau)^\alpha)  \right\}^{1/2} d\tau. \nn
	\end{align}
	Assumption (R5) shows that $0<p+\widehat{q}<1$. Hence, by using the inequalities (\ref{BasicMLInequality}),  we have $|\widetilde{E}_{\alpha,\alpha}(-m_j^\beta (t-\tau)^\alpha)| \le \widehat{c}_\alpha m_j^{-\beta (p+\widehat{q})} (t-\tau)^{-\alpha (p+\widehat{q}) } (t-\tau)^{\alpha-1}.$ This together with the above argument gives
	\begin{align}
	\hspace*{-0.35cm} \norm{\sum_{n_1\le j\le n_2} \psi^{(1)}_j(t)e_j}_{{\bf V}_{-\beta(q-\widehat{q}) }}  \le \,& M_{18} t^{-\alpha  }   \int_0^t (t-\tau)^{\alpha(q-\widehat{q})-1} \left\{ \sum_{n_1\le j\le n_2}  F_j^2(\tau)   \right\}^{1/2} d\tau, \label{psi1}
	\end{align}
	where $M_{18}=\widehat{c}_\alpha T^{\alpha  }$.
	Secondy, we proceed to establish an estimate for the sum $  \sum_{n_1\le j\le n_2} \psi^{(2)}_j(t)e_j$. Using the inequality (\ref{BasicMLInequality}), the absolute value of $\frac{ E_{\alpha,1}(-m_j^\beta t^\alpha)}{E_{\alpha,1}(-m_j^\beta T^\alpha)}$ is bounded by $\widehat{c}_\alpha c_\alpha^{-1} T^{\alpha} t^{-\alpha}$. Therefore, we derive
	\begin{align}
	\hspace*{1.25cm} \norm{\sum_{n_1\le j\le n_2} \psi^{(2)}_j(t)e_j}_{{\bf V}_{-\beta(q-\widehat{q}) }} =\,& \left\{ \sum_{n_1\le j\le n_2}  m_j^{2\beta (p+\widehat{q}) } \varphi_j^2 \frac{ E^2_{\alpha,1}(-m_j^\beta t^\alpha)}{E^2_{\alpha,1}(-m_j^\beta T^\alpha)} \right\}^{1/2} \nn \\
	\le\,&  \widehat{c}_\alpha c_\alpha^{-1} T^{\alpha}  \left\{ \sum_{n_1\le j\le n_2}  m_j^{2\beta (p+\widehat{q}) } \varphi_j^2    t^{-2\alpha } \right\}^{1/2}, \nn
	\end{align}
	which shows that
	\begin{align}
	\hspace*{-0.55cm} \norm{\sum_{n_1\le j\le n_2} \psi^{(2)}_j(t)e_j}_{{\bf V}_{-\beta(q-\widehat{q}) }}  \le \,& M_{19} t^{-\alpha}  \left\{ \sum_{n_1\le j\le n_2}  m_j^{2\beta (p+\widehat{q}) } \varphi_j^2    \right\}^{1/2} , \label{psi2}
	\end{align}
	where $M_{19}:=\widehat{c}_\alpha c_\alpha^{-1} T^{\alpha}$.
	Thirdly, we proceed to establish an estimate for the sum $\sum_{n_1\le j\le n_2} \psi^{(3)}_j(t)e_j$. By a similar argument as in (\ref{psi2}), we have
	\begin{align}
	\,& \hspace*{0.63cm} \norm{\sum_{n_1\le j\le n_2} \psi^{(3)}_j(t)e_j}_{{\bf V}_{-\beta(q-\widehat{q}) }} \nn\\
	\le \,& \int_0^T \norm{\sum_{n_1\le j\le n_2} F_j(\tau) \widetilde{E}_{\alpha,\alpha}(-m_j^\beta (T-\tau)^\alpha) \frac{m_j^\beta E_{\alpha,1}(-m_j^\beta t^\alpha)}{E_{\alpha,1}(-m_j^\beta T^\alpha)} e_j}_{{\bf V}_{-\beta(q-\widehat{q}) }} d\tau  \nn\\
	\le \,& \int_0^T \left\{\sum_{n_1\le j\le n_2} m_j^{2\beta (p+\widehat{q}) } F_j^2(\tau)  \widetilde{E}^2_{\alpha,\alpha}(-m_j^\beta (T-\tau)^\alpha) \frac{ E^2_{\alpha,1}(-m_j^\beta t^\alpha)}{E^2_{\alpha,1}(-m_j^\beta T^\alpha)}  \right\}^{1/2} d\tau \nn\\
	\le \,& \widehat{c}_\alpha M_{19} t^{-\alpha }   \int_0^T \left\{\sum_{n_1\le j\le n_2} m_j^{2\beta (p+\widehat{q}) } F_j^2(\tau)  m_j^{-2\beta (p+\widehat{q})} (T-\tau)^{2\alpha(q-\widehat{q})-2}    \right\}^{1/2} d\tau .  \nn
	\end{align}
	Therefore, we obtain the following estimate
	\begin{align}
	\hspace*{-0.5cm} \norm{\sum_{n_1\le j\le n_2} \psi^{(3)}_j(t)e_j}_{{\bf V}_{-\beta(q-\widehat{q}) }}
	\le \,&   \widehat{c}_\alpha M_{19} t^{-\alpha  }  \int_0^T   (T-\tau)^{\alpha(q-\widehat{q}) - 1} \left\{ \sum_{n_1\le j\le n_2}  F_j^2(\tau) \right\}^{1/2}  d\tau.  \label{psi3}
	\end{align}
	For almost every $\tau$ in the interval $(0,T)$, by (\ref{inclu1}), we have that $F(\tau,.)$ belongs to $L^2(D)$. This implies  $\sum_{1\le j\le n}  F_j(\tau)e_j$ is a Cauchy sequence in $L^2(D)$. This together with the embedding $$L^2(D) \hookrightarrow {\bf V}_{-\beta(q-\widehat{q}) }$$
	implies that $\sum_{1\le j\le n }  F_j(\tau)e_j$ is also a Cauchy sequence in ${\bf V}_{-\beta(q-\widehat{q}) }$. On the other hand, it follows from $\varphi\in {\bf V}_{\beta (p+\widehat{q})}$ that
	$$
	\lim\limits_{n_1,n_2\to \infty}   \sum_{n_1\le j\le n_2}   \varphi_j^2  m_j^{2\beta (p+\widehat{q})} = 0
	.$$
	By (R5), $0\le \widehat{q} \le \frac{s}{\alpha}$, and we obtain the inclusion  $ \mathcal X _{2,\alpha q - s}(J\times D) \subset \mathcal X _{2,\alpha({q}- \widehat{q})}(J\times D)$. We deduce that $F\in \mathcal X _{2,\alpha({q}- \widehat{q})}(J\times D)$, and
	\begin{align}
	\lim\limits_{n_1,n_2\to \infty} \int_0^T   (T-\tau)^{\alpha(q-\widehat{q}) - 1} \left\{ \sum_{n_1\le j\le n_2}  F_j^2(\tau) \right\}^{1/2}  d\tau =0,
	\end{align}	
	by the dominated convergence theorem.
	Combining these with the estimates (\ref{psi1}), (\ref{psi2}) and (\ref{psi3}), we have that
	$$\lim\limits_{n_1,n_2\to \infty} \norm{ \sum_{n_1\le j\le n_2} {}^{c}D_t^\alpha u_j(t) e_j}_{{\bf V}_{-\beta(q-\widehat{q}) }} =0 .$$
	Hence $\sum_{j=1}^n {}^{c}D_t^\alpha u_j(t) e_j $ is a Cauchy sequence and a convergent sequence in ${\bf V}_{-\beta(q-\widehat{q}) }$. We then conclude that ${}^{c}D_t^\alpha u(t,.)=\sum_{j=1}^\infty {}^{c}D_t^\alpha u_j(t) e_j $ finitely exists in the space  ${\bf V}_{-\beta(q-\widehat{q}) }$. Moreover, by taking the inequalities (\ref{psi1}), (\ref{psi2}) and (\ref{psi3}),  there exists a constant $M_{20}>0$ such that  	
	\begin{align}
	\norm{{}^{c}D_t^\alpha u(t,.)}_{{\bf V}_{-\beta(q-\widehat{q}) }}
	\le\,& M_{20} \norm{F(t,.)}_{-\beta(q-\widehat{q}) }    + M_{20} \left(  \norm{\varphi}_{{\bf V}_{\beta(p+\widehat{q})}}    +   \vertiii{F}_{\mathcal X_{2,\alpha({q}- \widehat{q})}} \right)t^{-\alpha  } \nn \\
	\le\,& M_{20} \norm{F(t,.)}   + M_{20} \left(  \norm{\varphi}_{{\bf V}_{\beta(p+\widehat{q})}}    +   \vertiii{F}_{\mathcal X_{2,\alpha({q}- \widehat{q})}} \right)t^{-\alpha  }. \label{over}
	\end{align}	
	Now, it follows from $0<\widehat{r}\le \frac{1-\alpha}{\alpha}$ and $0<\alpha q -s <\alpha$ that $1\le \frac{1}{\alpha}-\widehat{r} < \frac{1}{\alpha q -s } + \widehat{r}$. This implies the following Sobolev embedding
	$$L^{\frac{1}{\alpha q -s}+\widehat{r}}(0,T;{\bf V}_{-\beta(q-\widehat{q}) }) \hookrightarrow L^{\frac{1}{\alpha}-\widehat{r}}(0,T;{\bf V}_{-\beta(q-\widehat{q}) }).$$ Moreover, by the assumption (R5), $\widehat{q}<\frac{s}{\alpha}$, we have $ \frac{1}{\alpha q -s}+\widehat{r}>\frac{1}{\alpha (q-\widehat{q})}$. This implies that there exists a constant $C_*>0$ such that
	\begin{align}
	\vertiii{F}_{\mathcal X_{2,\alpha({q}- \widehat{q})}} \le C_* \norm{F}_{L^{\frac{1}{\alpha q -s } + \widehat{r}}(0,T;L^2(D))}. \label{mid}
	\end{align}
	Hence, we deduce from (\ref{over}) that there exists a constant $M_{21}>0$ satisfying
	\begin{align}
	\norm{{}^{c}D_t^\alpha u }_{L^{\frac{1}{\alpha}-\widehat{r}}(0,T;{\bf V}_{-\beta(q-\widehat{q}) })}
	\le\,&  M_{21} \norm{F}_{L^{\frac{1}{\alpha q -s } + \widehat{r}}(0,T;L^2(D))}    + M_{21}   \norm{\varphi}_{{\bf V}_{\beta(p+\widehat{q})}}      , \label{down}
	\end{align}	
	where we note that $\norm{t^{-\alpha}}_{L^{\frac{1}{\alpha}-\widehat{r}}(0,T;\mathbb{R})}<\infty$. The inequality (\ref{RTheo2a}) is proved by letting $C_3=M_{21}$.
	
	\vspace*{0.2cm}
	
	\noindent b) It is clear that the assumptions of this part also satisfy Part a. Therefore, by Part a, it is necessary to prove  $u\in C^\alpha((0,T];{\bf V}_{-\beta {q}})$, i.e.,
	\begin{align}
	\lim\limits_{t_2-t_1\to 0}\norm{{}^{c}D_t^\alpha u(t_2,.)-{}^{c}D_t^\alpha u(t_1,.)}_{{\bf V}_{-\beta {q}}}=0, \label{34b}
	\end{align}
	where we note $0<t_1<t_2\le T$.  After some simple computations we find that
	\begin{align}
	{}^{c}D_t^\alpha u(t_2,x)-{}^{c}D_t^\alpha u(t_1,x) = F(t_2,x)-F(t_1,x) + \sum_{1\le n \le 4} \mathcal{J}_n,\label{J}
	\end{align}
	where $\mathcal J_n =\mathcal L^\beta \mathcal I_n$, and $\mathcal I_n$ is defined by (\ref{ConFor}). Since $F$ is in $C^\alpha((0,T];{\bf V}_{-\beta {q}})$, we  have just to prove $\norm{\mathcal J_n}_{{\bf V}_{-\beta {q}}}$ approaches $0$ as $t_2-t_1$ approaches $0$. Let us first consider $\norm{\mathcal J_1}_{{\bf V}_{-\beta {q}}}$. The inequalities (\ref{BasicMLInequality}) yields that
	\begin{align}
	\norm{\mathcal J_1}_{{\bf V}_{-\beta {q}}}
	\le\,& \int_0^{t_1} \norm{ \mathcal L^\beta \sum_{j=1}^\infty  \int_{t_1-\tau}^{t_2-\tau}  F_j(\tau)\omega^{\alpha -2}E_{\alpha ,\alpha -1}(-m_j^\beta \omega^\alpha ) d\omega e_j}_{{\bf V}_{-\beta {q}}}    d\tau \nn\\
	\le \,& \int_0^{t_1} \left\{ \sum_{j=1}^\infty m_j^{2\beta} m_j^{ -2\beta {q}  } F_j^2(\tau) \left| \int_{t_1-\tau}^{t_2-\tau}  \omega^{\alpha -2} \left| E_{\alpha ,\alpha -1}(-m_j^\beta \omega^\alpha )\right| d\omega \right|^2 \right\}^{1/2}    d\tau \hspace*{-1cm} \nn\\
	\le \,& \widehat{c}_\alpha \int_0^{t_1} \left\{ \sum_{j=1}^\infty m_j^{2\beta} m_j^{ -2\beta {q}  } F_j^2(\tau) \left| \int_{t_1-\tau}^{t_2-\tau}  \omega^{\alpha -2}   m_j^{-\beta p} \omega^{-\alpha p}   d\omega \right|^2 \right\}^{1/2}    d\tau. \nn
	\end{align}
	We recall that, by (\ref{inclu1}), $F$ belongs to $\mathcal X _{2,\alpha q -s} (J\times D)$. 	Thus, we can deduce from the above inequality  that
	\begin{align}
	\norm{\mathcal J_1}_{{\bf V}_{-\beta {q}}}
	\le \,& \widehat{c}_\alpha \int_0^{t_1} \norm{F(\tau,.)} \left| \int_{t_1-\tau}^{t_2-\tau}  \omega^{\alpha q -2}        d\omega \right|     d\tau \le \frac{\widehat{c}_\alpha  }{1-\alpha {q}}  \vertiii{F}_{\mathcal X_{2,\alpha q -s} }  (t_2-t_1)^{ s }. \hspace*{0.65cm} \label{J1}
	\end{align}
	where we have used the same argument as in the estimate (\ref{I1}). Let us secondly consider $\norm{\mathcal J_2}_{{\bf V}_{-\beta {q}}}$. We have
	\begin{align}
	\norm{\mathcal J_2}_{{\bf V}_{-\beta {q}}}
	\le\,& \int_{t_1}^{t_2}\norm{\mathcal L^\beta \sum_{j=1}^\infty F_j(\tau) E_{\alpha,\alpha}(-m_j^\beta (t_2-\tau)^\alpha)  e_j}_{V_{-\beta {q}}} (t_2-\tau)^{\alpha-1} d\tau \nn\\
	\le\,& \widehat{c}_\alpha \int_{t_1}^{t_2} \left\{ \sum_{j=1}^\infty m_j^{2\beta} m_j^{ -2\beta {q}  } F_j^2(\tau) m_j^{-2\beta p} (t_2-\tau)^{-2\alpha p} \right\}^{1/2}  (t_2-\tau)^{\alpha-1} d\tau \nn \\
	\le\,& \widehat{c}_\alpha \int_{t_1}^{t_2} \norm{F(\tau,.)} (t_2-\tau)^{\alpha q -s -1} (t_2-\tau)^{s}d\tau \le \widehat{c}_\alpha \vertiii{F}_{\mathcal X_{2,\alpha q -s} }(t_2-t_1)^{s} , \label{J2}
	\end{align}
	where (\ref{BasicMLInequality}) has been used. Thirdly, we  consider the norm $\norm{\mathcal J_3}_{V_{-\beta {q}}}$. It is clear that
	 $$\mathcal J_3 = \mathcal L^\beta \mathcal I_3 = \mathcal L^{2\beta} \sum_{j=1}^\infty \varphi_j   \int_{t_1}^{t_2}  \frac{\widetilde{E}_{\alpha ,\alpha  }(-m_j^\beta  \omega^\alpha )}{E_{\alpha ,1}(-m_j^\beta  T^\alpha )} d\omega e_j.$$ 
	Hence, we deduce
	\begin{align}
	\norm{\mathcal J_3}_{{\bf V}_{-\beta {q}}}   = \left\{ \sum_{j=1}^\infty m_j^{4\beta} m_j^{ -2\beta {q}  } \varphi_j^2    \left| \int_{t_1}^{t_2}  \frac{\widetilde{E}_{\alpha ,\alpha  }(-m_j^\beta  \omega^\alpha )}{E_{\alpha ,1}(-m_j^\beta  T^\alpha )} d\omega \right|^2  \right\}^{1/2} .\nn
	\end{align}
	By applying (\ref{BasicMLInequality}), the absolute value of $\frac{E_{\alpha ,\alpha  }(-m_j^\beta  \omega^\alpha )}{E_{\alpha ,1}(-m_j^\beta  T^\alpha )}$ is bounded by $\widehat{c}_\alpha c_\alpha^{-1} \frac{1+m_j^\beta  T^\alpha }{1+\left(m_j^{\beta}\omega^{\alpha}\right)^2}$. This is associated with $1+m_j^\beta  T^\alpha \le (m_1^{-\beta}+T^\alpha) m_j^\beta$ that $\frac{1+m_j^\beta  T^\alpha }{1+\left(m_j^{\beta}\omega^{\alpha}\right)^2} \le (m_1^{-\beta}+T^\alpha) m_j^{-\beta} \omega^{-2\alpha}$. Thus,  we obtain
	\begin{align}
	\norm{\mathcal J_3}_{{\bf V}_{-\beta {q}}}
	\le\,& \widehat{c}_\alpha c_\alpha^{-1} (m_1^{-\beta}+T^\alpha) \left\{ \sum_{j=1}^\infty m_j^{4\beta} m_j^{ -2\beta {q}  } \varphi_j^2    \left| \int_{t_1}^{t_2}  m_j^{-\beta}\omega^{-2\alpha}\omega^{\alpha -1} d\omega \right|^2  \right\}^{1/2} \nn \\
	\le\,& \frac{M_{23}}{\alpha} t_1^{-2\alpha}  \norm{\varphi}_{{\bf V}_{\beta p}} \left( t_2^\alpha - t_1^\alpha \right), \label{J3}
	\end{align}
	where $M_{23}= \widehat{c}_\alpha c_\alpha^{-1} (m_1^{-\beta}+T^\alpha)  $. Finally, we can look at  $\norm{\mathcal J_4}_{{\bf V}_{-\beta {q}}}$   as follows:
	\begin{align}
	\norm{\mathcal J_4}_{{\bf V}_{-\beta {q}}}
	\le  \,& \int_0^T \norm{ \mathcal L^{2\beta}\sum_{j=1}^\infty   \int_{t_1}^{t_2}   F_j(\tau) \widetilde{E}_{\alpha,\alpha}(-m_j^\beta (T-\tau)^\alpha)  \frac{\widetilde{E}_{\alpha ,\alpha  }(-m_j^\beta  \omega^\alpha )}{E_{\alpha ,1}(-m_j^\beta  T^\alpha )} d\omega   e_j }_{{\bf V}_{-\beta {q}}}  d\tau  \nn\\
	\le \,&  \int_0^T \left\{ \sum_{j=1}^\infty   m_j^{4\beta} m_j^{ -2\beta {q}  } F_j^2(\tau) \left| \int_{t_1}^{t_2}    \widetilde{E}_{\alpha,\alpha}(-m_j^\beta (T-\tau)^\alpha)  \frac{\widetilde{E}_{\alpha ,\alpha  }(-m_j^\beta  \omega^\alpha )}{E_{\alpha ,1}(-m_j^\beta  T^\alpha )} d\omega \right|^2  \right\}^{1/2}  d\tau   \nn \\
	\le \,& M_{24} \int_0^T \left\{ \sum_{j=1}^\infty   m_j^{4\beta} m_j^{ -2\beta {q}  } F_j^2(\tau) \left| \int_{t_1}^{t_2}  m_j^{- \beta p} (T-\tau)^{ \alpha q -1 } m_j^{-\beta} \omega^{-\alpha-1}    d\omega \right|^2  \right\}^{1/2}  d\tau   \nn
	\end{align}
	where the fraction $\frac{\widetilde{E}_{\alpha ,\alpha  }(-m_j^\beta  \omega^\alpha )}{E_{\alpha ,1}(-m_j^\beta  T^\alpha )}$ can be estimated in the same way as in the proof of (\ref{J3}), and $M_{24}=\widehat{c}_\alpha M_{23}$. This leads to
	\begin{align}
	\norm{\mathcal J_4}_{{\bf V}_{-\beta {q}}}
	\le \,&  \frac{M_{24}}{\alpha} t_1^{-2\alpha} (t_2^\alpha -t_1^\alpha) \int_0^T \norm{F(\tau,.)} (T-\tau)^{ \alpha q -1 }  d\tau,  \nn
	\end{align}	
	which shows that
	\begin{align}
	\norm{\mathcal J_4}_{{\bf V}_{-\beta {q}}} \le\,& \frac{M_{24}}{\alpha} t_1^{-2\alpha} \vertiii{F}_{\mathcal X_{2,\alpha q} } (t_2^\alpha -t_1^\alpha) . \label{J4}
	\end{align}
	This implies (\ref{34b}) by taking (\ref{J}), (\ref{J1}), (\ref{J2}), (\ref{J3}) and (\ref{J4}) together. Thus, $^{c}D_t^\alpha u$ is contained in  $ C((0,T];{\bf V}_{-\beta {q}})$. \\
	
	On the other hand, it is easy to see that the estimates (\ref{psi1}), (\ref{psi2}), (\ref{psi3}) also hold for $\widehat{q}=0$. Hence, we deduce from (\ref{over}) and (\ref{mid}) that
	\begin{align}
	\,& t^\alpha \norm{{}^{c}D_t^\alpha u(t,.)}_{{\bf V}_{-\beta q}} \nn\\
	\le\,& M_{20}t^\alpha \norm{F(t,.)}_{{\bf V}_{-\beta q}}   + M_{20} \left(  \norm{\varphi}_{{\bf V}_{\beta p}}    +    C_*  \norm{F}_{L^{\frac{1}{\alpha q -s } + \widehat{r}}(0,T;L^2(D))}  \right) .
	\end{align}
	Now ${}^{c}D_t^\alpha u \in C^\alpha((0,T];{\bf V}_{-\beta {q}})$. In addition, there exists a positive constant $C'>0$ such that
	\begin{align}
	\,& \norm{{}^{c}D_t^\alpha u}_{C^\alpha((0,T];{\bf V}_{-\beta q})} \nn\\
	\le\,&  M_{20}\norm{F}_{C^{\alpha  }((0,T];{\bf V}_{-\beta {q}})} +   C'M_{20} \norm{\varphi}_{{\bf V}_{\beta (p+\widehat{q}) }} +  C_*M_{20} \norm{F}_{L^{\frac{1}{\alpha q -s } + \widehat{r}}(0,T;L^2(D))}. \nn
	\end{align}
	We can complete the proof by taking (\ref{RTheo2a}) and the above inequality together.
\end{proof}

\section{FVP with a nonlinear source}

\noindent In this section, we study the existence, uniqueness, and regularity of mild solutions of FVP (\ref{mainpro1})-(\ref{mainpro3}) corresponding to the nonlinear source function $F(t,x,u(t,x))$. It is suitable  considering assumptions that $u(t,.)$ and $F(t,.,u(t,.))$ belong to the same spatial space $H$. In view of most considerations of PDEs, we let $H=L^2(D)$.

\vspace*{0.2cm}

We introduce the following assumptions on the numbers $p,q,p',q',\widehat{p},\widehat{q},r,\widehat{r}$. \vspace*{0.2cm}
\begin{itemize}
	\item (R1b) $0<q<p<1$ such that ${p}+{q}=1$; \vspace*{0.1cm}
	\item (R4b) $
	\displaystyle 0 < p' <p , \hspace*{0.9cm} q' =1- p' , \quad 0<r\le \frac{1-\alpha q'}{\alpha q'}
	$; \vspace*{0.1cm}
	\item (R4c) $
	\displaystyle 0 < p' \le p-q, \quad q' =1- p' , \quad 0<r\le \frac{1-\alpha q'}{\alpha q'}
	$; \vspace*{0.1cm}
	\item (R5b) $
	\displaystyle 0 \le \widehat{q} < q, \hspace*{1.12cm} \widehat{p} =1- \widehat{q} , \hspace*{0.45cm} 0< \widehat{r} \le \frac{1-\alpha  }{\alpha}
	$.  	\vspace*{0.1cm}
\end{itemize}

In our work, we will assume on $F(t,.,u(t,.))$ the following assumptions \vspace*{0.2cm}
\begin{itemize}
	\item (A1) $F(t,.,\mathbf 0)=\mathbf 0$, and there exists a constant $K>0$ such that, for all  $v_1,v_2 \in L^2(D)$ and $t\in J$,
	\begin{align}
	\norm{F(t,.,v_1)-F(t,.,v_2)} \le K \norm{v_1-v_2}. \nn
	\end{align}
	\item (A2) $F(t,.,\mathbf 0)=\mathbf 0$, and there exists a constant $K_*>0$ such that, for all $v_1,v_2 \in L^2(D)$ and $t_1,t_2\in J$,
	\begin{align}
	\norm{F(t_1,.,v_1)-F(t_2,.,v_2)} \le K_*\left(|t_1-t_2|+ \norm{v_1-v_2} \right). \nn
	\end{align}
\end{itemize}

Note that the assumption (A1), (A2) imply that, for
$v\in L^2(D)$,
\begin{align}
\norm{F(t,.,v)} \le K \norm{v}. \label{FvKv}
\end{align}

\vspace*{0.2cm}

We try to develop the ideas of the linear FVP (\ref{LINEAR}) to deal with the nonlinear FVP (\ref{mainpro1})-(\ref{mainpro3}). In Section 3, for the linear function $F(t,x)$ we assume that
\begin{align}
F\in \mathcal X _{2,\alpha{q}}(J\times D),   \textrm{ or }  F\in \mathcal X _{2,\alpha{q}-s}(J\times D),  \textrm{ or } F \in L^{\frac{1}{\alpha q -s}+\widehat{r}}(0,T;L^2(D)), \label{LinearFA}
\end{align}
where $p,q,s,\widehat{r}$ are defined by (R1), (R3), (R5). However, we cannot suppose that the nonlinear source function $F(t,x,u(t,x))$ satisfies the same assumptions as in (\ref{LinearFA}), and then find the solution $u$.  A natural  idea might be to combine the idea of Lemma \ref{Lemma1} with the inequality (\ref{FvKv}), i.e., we predict the solution $u$ may be contained in the set
\begin{align}
{\bf W}_{\gamma,\eta}^{\rho}(J\times D) := \Big\{ w\in \mathcal X _{2,\eta} (J\times D): \quad \norm{w(t,.)}\le \rho t^{-\gamma}, \textrm{ for } 0<t\le T  \Big\} ,\nn
\end{align}
for $\rho>0$, $0<\gamma\le \eta <1$.

\vspace*{0.2cm}

The prediction will be proved in the next lemma. However, it is necessary to give some useful notes on ${\bf W}_{\gamma,\eta}^{\rho}(J\times D)$ as follows. For $w\in {\bf W}_{\gamma,\eta}^{\rho}(J\times D)$,  we see
\begin{align}
\esssup_{0\le t\le T}\int_0^t \norm{w(\tau,.)}  (t-\tau)^{\eta-1} d\tau \le   \rho \esssup_{0\le t\le T} \int_0^t \tau^{-\gamma} (t-\tau)^{\eta-1} d\tau. \nn
\end{align}
The function $\tau \to   \tau^{-\gamma} (t-\tau)^{\eta-1}$ is integrable on $(0,t)$ since both numbers $-\gamma$ and $\eta-1$ are greater than $-1$. In addition, we have  $\displaystyle \int_0^t \tau^{-\gamma} (t-\tau)^{\eta-1} d\tau = t^{\eta-\gamma}B(\eta,1-\gamma)$, where $B(\cdot,\cdot)$ is the Beta function see, for example, \cite{Samko, Podlubny, Diethelm}. Hence, we have
\begin{align}
\vertiii{w}_{\mathcal X _{2,\eta}}  \le  \rho T^{\eta-\gamma}B(\eta,1-\gamma). \label{betabound}
\end{align}
Moreover, if $\gamma <\eta$, then there always exists a real number $p$ such that $1<\frac{1}{\eta} < p < \frac{1}{\gamma}$. This implies that the function $t^{-\gamma}$ belongs to $L^p(0,T;L^2(D))$. Therefore, we can obtain the following inclusions
\begin{align}
{\bf W}_{\gamma,\eta}^{\rho}(J\times D) \subset L^p(0,T;L^2(D)) \subset \mathcal X _{2,\eta}(J\times D).\nn
\end{align}
In the following lemma, we will consider the case $\gamma = \eta$, which we will denote by ${\bf W}_{\gamma}^{\rho}(J\times D) := {\bf W}_{\gamma,\eta}^{\rho}(J\times D) $.

\vspace*{0.2cm}

Now, the Sobolev embedding ${\bf V}_{\beta p} \hookrightarrow L^2(D)$ shows that there exists a positive constant $C_D$ depending  on $D,\beta,q$ such that $\norm{v}\le C_D \norm{v}_{{\bf V}_{\beta p}}$ for all $v\in {\bf V}_{\beta p}$. In this section, we let $$k_0(K) =  KB(\alpha q,1-\alpha q)T^{\alpha q} \left[ \widehat{c}_\alpha m_1^{- \beta{p}}   +  \widehat{c}_\alpha^2 c^{-1}_\alpha  (m_1^{-\beta}+ T^\alpha)^{{p}}   \right],$$
and $$M_0 = C_D T^{\alpha q}+\widehat{c}_\alpha c^{-1}_\alpha  T^{\alpha {q}}   (m_1^{-\beta}+T^\alpha)^{{p}}.$$

\begin{lemma} \label{Lemma2} Let $p$, $q$ be defined by (R1). Let $\big\{w_{(n)}\big\}$ be defined by $w_{(0)}=\phi$,
	\begin{align}
	w_{(n)}(t,x)=\mathcal O(t,x)w_{(n-1)}, \quad  n \in \mathbb{N}, n\ge 1, \label{SequenceDefine}
	\end{align}
	where $\mathcal O=\sum_{1\le n\le 3}\mathcal O_n$. If $\phi$ belongs to  ${\bf V}_{\beta{p}}$, $F$ satisfies  (A1), and  $k_0(T)<1$, then
	\begin{align}
	\big\{w_{(n)}\big\}_{n\ge 0} \subset {\bf W}_{\alpha q}^{\widehat{C}_0}(J\times D), \label{SequenceBounded}
	\end{align}
	where $\widehat{C}_0:=  \widetilde{C}_0 \norm{\phi}_{{\bf V}_{\beta{p}}}$ and $\widetilde{C}_0=\frac{M_0}{1- k_0(T) }$.
\end{lemma}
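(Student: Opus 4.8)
The plan is to argue by induction on $n$, establishing at each stage the pointwise-in-time bound $\norm{w_{(n)}(t,.)}\le \widehat{C}_0\, t^{-\alpha q}$ on $(0,T]$. Membership of each $w_{(n)}$ in $\mathcal X_{2,\alpha q}(J\times D)$ is then automatic: the computation preceding the lemma (which yields (\ref{betabound}) in the borderline case $\gamma=\eta=\alpha q$) shows that any $w$ with $\norm{w(t,.)}\le \rho\,t^{-\alpha q}$ satisfies $\vertiii{w}_{\mathcal X_{2,\alpha q}}\le \rho\,B(\alpha q,1-\alpha q)<\infty$. Throughout I read the recursion (\ref{SequenceDefine}) through the mild representation (\ref{Nmild}), namely $w_{(n)}=\mathcal O_1 F(w_{(n-1)})+\mathcal O_2\phi+\mathcal O_3 F(w_{(n-1)})$, where $\mathcal O_2$ is applied to the fixed final datum $\phi$ while $\mathcal O_1$ and $\mathcal O_3$ act on the source $F(\cdot,\cdot,w_{(n-1)})$.

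For the base case $w_{(0)}=\phi$ (constant in $t$), I would combine the Sobolev embedding ${\bf V}_{\beta p}\hookrightarrow L^2(D)$, with constant $C_D$, with the inequalities $\widetilde{C}_0\ge M_0\ge C_D T^{\alpha q}$; here the first follows from $0<k_0(K)<1$ and the second from the definition of $M_0$. Since $t^{-\alpha q}\ge T^{-\alpha q}$ on $(0,T]$, this gives $\norm{\phi}\le C_D\norm{\phi}_{{\bf V}_{\beta p}}\le \widehat{C}_0 T^{-\alpha q}\le \widehat{C}_0 t^{-\alpha q}$.

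For the inductive step, assume $\norm{w_{(n-1)}(\tau,.)}\le \widehat{C}_0\tau^{-\alpha q}$. The term $\mathcal O_2\phi$ is controlled directly by the estimate (\ref{Theo1O2}) of Lemma \ref{Lemma1}, giving $\norm{\mathcal O_2(t,.)\phi}\le M_2\,t^{-\alpha q}\norm{\phi}_{{\bf V}_{\beta p}}$ with $M_2$ exactly the second summand of $M_0$. For the two source terms I would insert the Lipschitz bound (\ref{FvKv}), $\norm{F(\tau,.,w_{(n-1)}(\tau,.))}\le K\norm{w_{(n-1)}(\tau,.)}\le K\widehat{C}_0\tau^{-\alpha q}$, into the estimates (\ref{Theo1O1}) and (\ref{Theo1O3}). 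The decisive observation is that the resulting convolution is the \emph{time-independent} Beta integral $\int_0^t \tau^{-\alpha q}(t-\tau)^{\alpha q-1}\,d\tau=B(\alpha q,1-\alpha q)$ (and likewise with upper limit $T$ for $\mathcal O_3$); bounding the constant so produced by $T^{\alpha q}t^{-\alpha q}$ and matching the two bracketed coefficients in the definition of $k_0(K)$ yields $\norm{\mathcal O_1 F(w_{(n-1)})}+\norm{\mathcal O_3 F(w_{(n-1)})}\le k_0(K)\,\widehat{C}_0\,t^{-\alpha q}$.

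Summing the three contributions and using $M_2\le M_0$ gives $\norm{w_{(n)}(t,.)}\le\big[k_0(K)\widehat{C}_0+M_0\norm{\phi}_{{\bf V}_{\beta p}}\big]t^{-\alpha q}$, and the bound closes exactly because of the way the constants were chosen: $\widetilde{C}_0=M_0/(1-k_0(K))$ is equivalent to $k_0(K)\widetilde{C}_0+M_0=\widetilde{C}_0$, so with $\widehat{C}_0=\widetilde{C}_0\norm{\phi}_{{\bf V}_{\beta p}}$ the bracket equals $\widehat{C}_0$ and the induction is complete. The main obstacle is conceptual rather than computational: since $F$ now depends on the previous iterate, Lemma \ref{Lemma1} cannot be invoked as a black box, and one must verify that the decay exponent $\alpha q$ is \emph{self-reproducing} under the convolution --- this is precisely the time-independence of the Beta integral --- while the smallness hypothesis $k_0(K)<1$ is what forces the self-referential estimate to stabilize at the finite constant $\widehat{C}_0$ rather than growing with $n$.
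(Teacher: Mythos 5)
Your proposal is correct and follows essentially the same route as the paper: induction on $n$, the base case via the embedding ${\bf V}_{\beta p}\hookrightarrow L^2(D)$ and $M_0\ge C_D T^{\alpha q}$, the inductive step via (\ref{Theo1O1})--(\ref{Theo1O3}) with the Lipschitz bound (\ref{FvKv}) and the time-independent Beta integral, and closure through the identity $k_0\widetilde{C}_0+M_0=\widetilde{C}_0$. The only cosmetic difference is that the paper routes the source estimates through the intermediate bound $\vertiii{w_{(n-1)}}_{\mathcal X_{2,\alpha q}}\le \widehat{C}_0 B(\alpha q,1-\alpha q)$ from (\ref{betabound}) rather than substituting the pointwise decay directly into the convolution, which is the same computation.
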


\begin{proof} First, we have $$\norm{w_{(0)}} = \norm{\phi} \le C_D \norm{\phi}_{{\bf V}_{\beta p}} \le M_0  \norm{\phi}_{{\bf V}_{\beta p}} t^{-\alpha q} \le \widehat{C}_0 t^{-\alpha q}.$$ Hence, inequality (\ref{betabound}) and $\alpha q<1$,  imply  $w_{(0)}\in {\bf W}_{\alpha q}^{\widehat{C}_0}(J\times D)$. Now, we assume that $w_{(n-1)}$ belongs to ${\bf W}_{\alpha q}^{\widehat{C}_0}(J\times D)$ for some $n\ge 1$. Then, by using (\ref{betabound}), we have
	\begin{align}
	\hspace*{0.5cm} \vertiii{w_{(n-1)}}_{\mathcal X _{2,\eta}}  \le  \widehat{C}_0 B(\alpha q,1-\alpha q). \label{ConductionHypothesis}
	\end{align}
	By induction, the inclusion (\ref{SequenceBounded}) will be proved by showing that  $w_{(n)}$ belongs to ${\bf W}_{\alpha q}^{\widehat{C}_0}(J\times D)$. Indeed, by using the same arguments as in the proof of (\ref{Theo1O1}), we have
	\begin{align}
	\norm{\mathcal O_1 (t,.) F(w_{(n-1)})}  \le\,&  \widehat{c}_\alpha m_1^{- \beta{p}} \int_0^t \norm{F(\tau,.,u_{(n-1)}(\tau,.))} (t-\tau)^{\alpha{q}-1} d\tau  , \label{Lem2a}\\
	\le\,& K \widehat{c}_\alpha m_1^{- \beta{p}} \vertiii{w_{(n-1)}}_{\mathcal X_{2,\alpha q}} \le M_{26}  \widehat{C}_0 t^{-\alpha q}  , \nn
	\end{align}
	where we have used (\ref{FvKv}),  (\ref{ConductionHypothesis}) and let  $M_{26}= K \widehat{c}_\alpha m_1^{- \beta{p}}  B(\alpha q,1-\alpha q)T^{\alpha q}$.
	On the other hand, the norm $\norm{\mathcal O_2 (t,.) \phi }$ is estimated by (\ref{Theo1O2}), i.e.,
	$$\norm{\mathcal O_{2}(t,x)\phi}   \le M_0   \norm{\phi}_{{\bf V}_{\beta{p}}} t^{-\alpha{q}},$$
	where we note that $$M_2=\widehat{c}_\alpha c^{-1}_\alpha  T^{\alpha {q}}   (m_1^{-\beta}+T^\alpha)^{{p}} \le M_0.$$
	 The norm $\norm{\mathcal O_3 (t,.) F(u)}$ can be estimated in the same way as in the proof of (\ref{Theo1O3}). That is,
	\begin{align}
	\norm{\mathcal O_3 (t,.) F(w_{(n-1)})} \le\,&  M_3  t^{-\alpha{q}} \int_0^T \norm{F(\tau,.,u_{(n-1)}(\tau,.))} (T-\tau)^{\alpha{q}-1} d\tau  \label{Lem2b} \\
	\le\,&  KM_3  t^{-\alpha{q}} \vertiii{w_{(n-1)}}_{\mathcal X_{2,\alpha q}} \le M_{27}   \widehat{C}_0 t^{-\alpha{q}} , \nn
	\end{align}
	where (\ref{FvKv}),  (\ref{ConductionHypothesis}) have been used. Here, we let   $M_{27}=KM_3 B(\alpha q,1-\alpha q)$, and $M_3=\widehat{c}_\alpha^2 c^{-1}_\alpha T^{\alpha{q}} (m_1^{-\beta}+ T^\alpha)^{{p}}.$
	
	\vspace*{0.2cm}
	
	We deduce from the above arguments and $w_{(n)}(t,.)  = \mathcal O(t,.)w_{(n-1)} $ that
	\begin{align}
	\norm{w_{(n)}(t,.)}   \le \sum_{1\le n\le 3} \norm{\mathcal O_n(t,.)w_{(n-1)}} \le k_{0}(T)   \widehat{C}_0 t^{-\alpha q}  + M_0   \norm{\phi}_{{\bf V}_{\beta{p}}} t^{-\alpha{q}}, \label{wnbound}
	\end{align}
	by noting that $k_{0}(T)=  M_{26}+M_{27}$. Since $   \widehat{C}_0=\frac{M_0}{1-k_{0}(T)} \norm{\phi}_{{\bf V}_{\beta{p}}}$, the above inequality implies that $\norm{w_{(n)}(t,.)}    \le \widehat{C}_0$. Therefore, from $\alpha q<1$, we obtain the inclusion (\ref{SequenceBounded}).
\end{proof}

Next, it is necessary to give a definition of mild solutions of FVP (\ref{mainpro1})-(\ref{mainpro3}).

\begin{definition} If a function $u$ belongs to $L^p(0,T;L^q(D))$, for some $p,q\ge 1$, and satisfies Equation (\ref{Nmild}), then $u$ is said to be a mild solution of FVP  (\ref{mainpro1})-(\ref{mainpro3}).
\end{definition}

The following theorems presents  existence, uniqueness, and regularity of a mild solution of FVP (\ref{mainpro1})-(\ref{mainpro3}).

\begin{theorem} \label{STheoN1} \hspace*{20cm}\\
	\noindent a) Let $p,q,r,p',q'$ be defined by (R1),  (R4b).  If $\varphi$ belongs to  ${\bf V}_{\beta{p}}$, $F$ satisfies  (A1), and  $k_0(T)<1$, then FVP (\ref{mainpro1})-(\ref{mainpro3}) has a unique solution   $$u \in    L^{\frac{1}{\alpha q'}-r}(0,T;{\bf V}_{\beta({p}- p' )}) \cap C^{\alpha q}((0,T];L^2(D))  ,$$
	and there exists a positive constant $C_5$ such that
	\begin{align}
	\norm{u}_{L^{\frac{1}{\alpha q'}-r}(0,T;{\bf V}_{\beta({p}- p' )})} + \norm{u}_{C^{\alpha q}((0,T];L^2(D))}  \le \,& C_{5} \norm{\varphi}_{{\bf V}_{\beta{p}}} .  \nn
	\end{align}
	
	\noindent b) Let $p,q,r,p',q'$ be defined by (R1b), (R4c).  If $\varphi$ belongs to  ${\bf V}_{\beta{p}}$, $F$ satisfies  (A1), and  $k_0(T)<1$, then FVP (\ref{mainpro1})-(\ref{mainpro3}) has a unique solution $$u \in    L^{\frac{1}{\alpha q'}-r}(0,T;{\bf V}_{\beta({p}- p' )}) \cap C^{\alpha q}((0,T];L^2(D)) \cap C^{\alpha q}([0,T];{\bf V}_{-\beta q'}),$$
	and there exists a positive constant $C_6$ such that
	\begin{align}
	\norm{u}_{L^{\frac{1}{\alpha q'}-r}(0,T;{\bf V}_{\beta({p}- p' )})} + \norm{u}_{C^{\alpha q}((0,T];L^2(D))} + \vertiii{u}_{C^{\alpha q}([0,T];{\bf V}_{-\beta q'})} \le \,& C_{6} \norm{\varphi}_{{\bf V}_{\beta{p}}} .  \nn
	\end{align}
\end{theorem}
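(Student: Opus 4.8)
The plan is to construct the solution by a contraction‑mapping (Picard) argument in the weighted space underlying ${\bf W}_{\alpha q}^{\widehat{C}_0}(J\times D)$, and then to bootstrap all the stated regularity by treating $F(\cdot,\cdot,u)$ as a \emph{known} linear source and reusing the estimates of Theorem \ref{STheo1}. Concretely, I would equip the ball ${\bf W}_{\alpha q}^{\widehat{C}_0}(J\times D)$ with the metric $d(w,\tilde{w})=\sup_{0<t\le T}t^{\alpha q}\norm{w(t,.)-\tilde{w}(t,.)}$; it is complete, since the pointwise bound defining the ball forces membership in $\mathcal X_{2,\alpha q}$ through \eqref{betabound}. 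Lemma \ref{Lemma2} already shows that the map $\mathcal O w=\mathcal O_1(\cdot)F(w)+\mathcal O_2(\cdot)\varphi+\mathcal O_3(\cdot)F(w)$ sends this ball into itself. For the contraction, the data term $\mathcal O_2\varphi$ cancels in $\mathcal O w-\mathcal O\tilde{w}=\mathcal O_1[F(w)-F(\tilde{w})]+\mathcal O_3[F(w)-F(\tilde{w})]$, so the Lipschitz hypothesis (A1) combined with the two estimates \eqref{Lem2a} and \eqref{Lem2b} from the proof of Lemma \ref{Lemma2} gives $d(\mathcal O w,\mathcal O\tilde{w})\le(M_{26}+M_{27})\,d(w,\tilde{w})=k_0(T)\,d(w,\tilde{w})$. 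Since $k_0(T)<1$, Banach's theorem produces a unique fixed point $u$, which is the unique mild solution and satisfies $\norm{u(t,.)}\le\widehat{C}_0\,t^{-\alpha q}$; by \eqref{FvKv} the frozen source $G:=F(\cdot,\cdot,u)$ then obeys $\norm{G(t,.)}\le K\widehat{C}_0\,t^{-\alpha q}$ and $G\in\mathcal X_{2,\alpha q}(J\times D)$.

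For part a) I would freeze $G$ and repeat the computation of Step 1 in the proof of Theorem \ref{STheo1} verbatim, except that only $\vertiii{G}_{\mathcal X_{2,\alpha q}}$ (not the $\mathcal X_{2,\alpha q-s}$ norm) is needed: the factor $m_j^{-2\beta p'}$ is summable against $m_1$ because $0<p'<p$, and the $\mathcal O_2,\mathcal O_3$ pieces require only $\varphi\in{\bf V}_{\beta p}$ and $\int_0^T\norm{G(\tau,.)}(T-\tau)^{\alpha q-1}d\tau\le\vertiii{G}_{\mathcal X_{2,\alpha q}}$. This yields $\norm{u(t,.)}_{{\bf V}_{\beta(p-p')}}\le C\norm{\varphi}_{{\bf V}_{\beta p}}t^{-\alpha q'}$, and the condition $r\le\frac{1-\alpha q'}{\alpha q'}$ makes $t^{-\alpha q'}\in L^{\frac{1}{\alpha q'}-r}(0,T;\mathbb{R})$, giving the $L^{\frac{1}{\alpha q'}-r}(0,T;{\bf V}_{\beta(p-p')})$ bound. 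Membership in $C^{\alpha q}((0,T];L^2(D))$ is immediate from $u\in{\bf W}_{\alpha q}^{\widehat{C}_0}$ together with interior continuity, which follows by running the $\mathcal I_1,\mathcal I_2$ estimates of Step 2 on compact subintervals of $(0,T]$, where the singular weight $t_1^{-\alpha q}$ is harmless.

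The genuinely new work is the continuity up to $t=0$ in part b). I would write $u=\mathcal O_1 G+\mathcal O_2\Phi$ with $\Phi:=\varphi-\mathcal O_1(T,\cdot)G$, checking first that $\Phi\in{\bf V}_{\beta p}$ via $|\widetilde{E}_{\alpha,\alpha}(-m_j^\beta\omega^\alpha)|\le\widehat{c}_\alpha m_j^{-\beta p}\omega^{\alpha q-1}$, which makes $\norm{\mathcal O_1(T,\cdot)G}_{{\bf V}_{\beta p}}\le\widehat{c}_\alpha\vertiii{G}_{\mathcal X_{2,\alpha q}}$. The data piece $\mathcal O_2\Phi$ is handled by the estimate \eqref{I3COT}, which is uniform down to $t_1=0$ and delivers the Hölder exponent $\alpha(p-p')\ge\alpha q$ precisely because (R4c) forces $p'\le p-q$; hence $\mathcal O_2\Phi\in C^{\alpha(p-p')}([0,T];{\bf V}_{-\beta q'})\subset C^{\alpha q}([0,T];{\bf V}_{-\beta q'})$. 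For the source piece $\mathcal O_1 G$, which vanishes at $t=0$, I would prove the uniform Hölder-$\alpha q$ bound by splitting according to the size of the gap: when $t_2-t_1\ge t_1$ I use the boundary decay $\norm{\mathcal O_1(t,\cdot)G}_{{\bf V}_{-\beta q'}}\le C t^{\alpha q}$, obtained by bounding $E_{\alpha,\alpha}$ with the power $p-q\ge0$ so that $\int_0^t\tau^{-\alpha q}(t-\tau)^{2\alpha q-1}\,d\tau=C t^{\alpha q}$; when $t_2-t_1<t_1$ I use the increment bound $\norm{\mathcal I_1}_{{\bf V}_{-\beta q'}}+\norm{\mathcal I_2}_{{\bf V}_{-\beta q'}}\le C t_1^{-\alpha q}(t_2-t_1)^{\alpha}$ (crudest Mittag–Leffler bound plus an incomplete‑Beta estimate) and absorb $t_1^{-\alpha q}(t_2-t_1)^{\alpha-\alpha q}\le T^{\alpha(p-q)}$, where $p>q$ from (R1b) is essential.

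The main obstacle is exactly this last step: unlike in Theorem \ref{STheo1}b, the source $G$ lies only in $\mathcal X_{2,\alpha q}$ (with pointwise decay $t^{-\alpha q}$) and not in the smaller $\mathcal X_{2,\alpha q-s}$, so the clean uniform bound \eqref{I1aaa} is unavailable and one must instead match an interior increment estimate to the boundary decay through the gap‑based split, with $q<p$ and $p'\le p-q$ being exactly what align the two regimes at the exponent $\alpha q$. Collecting the constants from the three pieces then gives $C_5$ and $C_6$ and finishes the proof.
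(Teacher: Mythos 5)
Your proposal is correct, and its skeleton coincides with the paper's: a Picard/contraction argument in the weighted ball ${\bf W}_{\alpha q}^{\widehat{C}_0}(J\times D)$ for existence and uniqueness (the paper phrases this as showing the iterates $w_{(n)}$ are Cauchy with $\norm{w_{(n+k)}(t,.)-w_{(n)}(t,.)}\le k_0^n(T)(2\widehat{C}_0)t^{-\alpha q}$, which is exactly your contraction estimate in the metric $\sup_t t^{\alpha q}\norm{\cdot}$), followed by freezing $G=F(\cdot,\cdot,u)$ and recycling the linear operator bounds for the ${\bf V}_{\beta(p-p')}$ regularity. Where you genuinely diverge is the Hölder continuity on $[0,T]$ in part b). The paper estimates each of the four increments $\mathcal I^N_1,\dots,\mathcal I^N_4$ separately and uniformly in $t_1$: for $\mathcal I^N_1$ it keeps the extra power $(t_1-\tau)^{\alpha(p-p')}$ inside the incomplete-Beta integral, pulls it out as $t_1^{\alpha(p-p')}$, applies \eqref{CalBeta} to get $t_1^{\alpha(p-p')-\alpha q}(t_2-t_1)^{\alpha q}$, and absorbs the $t_1$ power using $p'\le p-q$; the other pieces are handled by \eqref{IN2} with $p_0=p-p'$ and by \eqref{I3COT}. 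You instead regroup $u=\mathcal O_1G+\mathcal O_2\Phi$ with $\Phi=\varphi-\mathcal O_1(T,\cdot)G$ (legitimate, since $\mathcal O_3=-\mathcal O_2\mathcal O_1(T,\cdot)$, and your check that $\Phi\in{\bf V}_{\beta p}$ is right), dispose of $\mathcal O_2\Phi$ in one stroke via \eqref{I3COT}, and handle $\mathcal O_1G$ by matching a boundary-decay bound $\norm{\mathcal O_1(t,\cdot)G}_{{\bf V}_{-\beta q'}}\lesssim t^{\alpha q}$ (valid since $p-q\ge 0$ makes the exponent $2\alpha q-1>-1$ integrable against $\tau^{-\alpha q}$) to a crude interior increment bound $t_1^{-\alpha q}(t_2-t_1)^{\alpha}$, split according to $t_2-t_1\gtrless t_1$. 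Both routes consume the hypotheses $p>q$ and $p'\le p-q$ in the same places and yield the same exponent $\alpha q$; yours trades the paper's single uniform integral identity for a two-regime argument, which is slightly more robust (it isolates exactly why $\mathcal X_{2,\alpha q}$ decay, rather than $\mathcal X_{2,\alpha q-s}$ membership, suffices), while the paper's is shorter once \eqref{CalBeta} is in hand. The only imprecision worth flagging is in part a): interior continuity on $(0,T]$ does not literally follow from "the $\mathcal I_1,\mathcal I_2$ estimates of Step 2" of Theorem \ref{STheo1}, since those presuppose $F\in\mathcal X_{2,\alpha q-s}$; but your own gap-based increment bounds (which use only the pointwise decay $\norm{G(t,.)}\lesssim t^{-\alpha q}$) already supply it, so nothing is missing in substance.
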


\begin{proof} a) We divide the proof of this part  into the following steps.
	
	\vspace*{0.2cm}
	
	\noindent {\bf Step 1:} We prove the existence and uniqueness of a mild solution. In order to prove the existence of a mild solution of FVP (\ref{mainpro1})-(\ref{mainpro3}), we will construct a convergent sequence in $L^{\frac{1}{\alpha q}-r}(0,T;L^2(D))$ whose limit will be a mild solution of the problem. Here, $r$ is defined by (R2). Let $\big\{w_{(n)}\big\}_{n\ge 0}$ be a sequence defined by Lemma \ref{Lemma2} with respect to $\phi=\varphi\in {\bf V}_{\beta p}$, then  $\big\{w_{(n)}\big\}_{n\ge 0} \subset {\bf W}_{\alpha q}^{\widehat{C}_0}(J\times D)$ where $\widehat{C}_0:= \frac{M_2}{1- k_0(T) } \norm{\varphi}_{{\bf V}_{\beta{p}}}.$ Therefore,
	\begin{align}
	\norm{w_{(n)}(t,.)}\le \widehat{C}_0 t^{-\alpha q}, \quad  0 < t\le T, \nn
	\end{align}
	for all $n\ge 1$. This together with $t^{-\alpha{q}}$ belonging to $L^{\frac{1}{\alpha q}-r}(0,T;\mathbb{R})$ implies that $\big\{w_{(n)}\big\}_{n\ge 0}$ is a bounded sequence in $L^{\frac{1}{\alpha q}-r}(0,T;L^2(D))$. Now, we will show that $\big\{w_{(n)}\big\}_{n\ge 0}$ is convergent by proving that it is also a Cauchy sequence.  For fixed $n\ge 1$ and $k\ge 1$, the definition (\ref{SequenceDefine}) of $\big\{w_{(n)}\big\}_{n\ge 0}$ yields that
	\begin{align}
	w_{(n+k)}(t,x) - w_{(n)}(t,x)
	=\,& \mathcal O_1(t,x) \left[ F(w_{(n-1+k)})-F(w_{(n-1)})\right] \nn\\
	+\,& \mathcal O_3(t,x) \left[ F(w_{(n-1+k)})-F(w_{(n-1)})\right] .   \nn
	\end{align}
	Since $F$ satisfies Lemma \ref{Lemma2}, the latter equation shows that we can apply the same arguments as in Lemma \ref{Lemma2}   with $\phi={\bf 0}$. Hence, one can deduce
	\begin{align}
	\,& \norm{w_{(1+k)}(t,.) - w_{(1)}(t,.)} \nn\\
	\le\,& \norm{\mathcal O_1(t,.) \left[ F(w_{(0+k)})-F(w_{(0)})\right]} + \norm{\mathcal O_3(t,x) \left[ F(w_{(0+k)})-F(w_{(0)})\right]} \nn\\
	\le\,& \widehat{c}_\alpha m_1^{- \beta{p}} \int_0^t \norm{F(\tau,.,u_{(0+k)}(\tau,.))-F(\tau,.,u_{(0)}(\tau,.))} (t-\tau)^{\alpha{q}-1} d\tau \nn\\
	+ \,& M_3  t^{-\alpha{q}} \int_0^T \norm{F(\tau,.,u_{(0+k)}(\tau,.))-F(\tau,.,u_{(0)}(\tau,.))} (T-\tau)^{\alpha{q}-1} d\tau \nn
	\end{align}
	where we combined the estimates (\ref{Lem2a}), (\ref{Lem2b}). From   $\big\{w_{(n)}\big\}_{n\ge 0} \subset {\bf W}_{\alpha q}^{\widehat{C}_0}(J\times D)$, we have  $$\hspace*{-1.15cm} \norm{u_{(0+k)}(\tau,.)- u_{(0)}(\tau,.)} \le  2\widehat{C}_0 t^{-\alpha q}.$$ Thus, by noting the identity  $$\displaystyle \int_0^t  (t-\tau)^{a-1} \tau^{b-1} d\tau = t^{a+b-1}B(a,b), $$ where $a,b>0$ and $B(\cdot,\cdot)$ is the Beta function, we find that
	\begin{align}
	\,& \norm{w_{(1+k)}(t,.) - w_{(1)}(t,.)} \nn\\
	\le\,& \widehat{c}_\alpha m_1^{- \beta{p}}  K   \int_0^t \norm{u_{(0+k)}(\tau,.)- u_{(0)}(\tau,.)} (t-\tau)^{\alpha{q}-1}     d\tau, \nn \\
	+ \,& M_{3}K t^{-\alpha{q}}  \int_0^T \norm{u_{(0+k)}(\tau,.)- u_{(0)}(\tau,.)} (T-\tau)^{\alpha{q}-1}    d\tau \nn\\
	\le\,& \widehat{c}_\alpha m_1^{- \beta{p}}  K(2\widehat{C}_0) \int_0^t \tau^{-\alpha q} (t-\tau)^{\alpha{q}-1}     d\tau +  M_{3}K (2\widehat{C}_0) t^{-\alpha{q}}  \int_0^T \tau^{-\alpha q} (T-\tau)^{\alpha{q}-1}    d\tau \nn\\
	\le\,& \left( \widehat{c}_\alpha m_1^{- \beta{p}}  K T^{\alpha q} B(\alpha q, 1-\alpha q) +  M_{3}K     B(\alpha q, 1-\alpha q) \right) (2\widehat{C}_0) t^{-\alpha{q}} . \hspace*{-0.5cm} \nn
	\end{align}
	From the definition of $k_0(T)$, we conclude that $$ \norm{w_{(1+k)}(t,.) - w_{(1)}(t,.)}  \le k_0(T) (2\widehat{C}_0) t^{-\alpha{q}}. $$   Iterating this method $n$-times shows
	\begin{align}
	\,& \norm{w_{(n+k)}(t,.) - w_{(n)}(t,.)}  \le k^{n}_0(T) (2\widehat{C}_0) t^{-\alpha{q}} .\nn
	\end{align}
	Taking the $L^{\frac{1}{\alpha q}-r}(0,T;\mathbb{R})$-norm of both sides of the above inequality directly implies
	\begin{align}
	\,& \norm{w_{(n+k)} - w_{(n)}}_{L^{\frac{1}{\alpha q}-r}(0,T;L^2(D))}  \le k^{n}_0(T) (2\widehat{C}_0) \norm{t^{-\alpha{q}}}_{L^{\frac{1}{\alpha q}-r}(0,T;\mathbb{R})} .\label{wncauchy}
	\end{align}
	Here, we emphasise that the constants in (\ref{wncauchy}) also do not depend on $(n,k)$. Therefore, by letting $n$ go to infinity, we obtain
	$$\lim\limits_{n,k\to \infty} \norm{w_{(n+k)} - w_{(n)}}_{L^{\frac{1}{\alpha q}-r}(0,T;L^2(D))} = 0,$$ i.e., $\big\{w_{(n)}\big\}_{n\ge 0}$ is a bounded Cauchy sequence in $L^{\frac{1}{\alpha q}-r}(0,T;L^2(D))$. Hence, there exists a function $u$ in $L^{\frac{1}{\alpha q}-r}(0,T;L^2(D))$ such that
	$$u= \lim\limits_{n\to \infty}  w_{(n)}, \quad \textrm{in } L^{\frac{1}{\alpha q}-r}(0,T;L^2(D)),$$
	and $u$ satisfies Equation (\ref{Nmild}), i.e., $u$ is a mild solution of FVP problem (\ref{mainpro1})-(\ref{mainpro3}). Moreover, the boundedness (\ref{wnbound}) of $\big\{w_{(n)}\big\}_{n\ge 0}$ gives
	\begin{align}
	\norm{u(t,.)}    \le  \widetilde{C}_0 \norm{\varphi}_{{\bf V}_{\beta{p}}}  t^{-\alpha q}, \label{uboundW}
	\end{align}
	and so that $$\norm{u}_{L^{\frac{1}{\alpha q}-r}(0,T;L^2(D))}    \le  M_{28} \norm{\varphi}_{{\bf V}_{\beta{p}}}$$ where $M_{28}= \widetilde{C}_0 \norm{t^{-\alpha{q}}}_{L^{\frac{1}{\alpha q}-r}(0,T;\mathbb{R})}$.

	Now, we show the uniqueness of the solution $u$. Assume that $\widetilde{u}$ is   another solution of FVP (\ref{mainpro1})-(\ref{mainpro3}). Then, by applying the same argument as in (\ref{wncauchy}), we also have
	\begin{align}
	\,& \norm{u - \widetilde{u}}_{L^{\frac{1}{\alpha q}-r}(0,T;L^2(D))}  \le k^{n}_0(T) (2\widehat{C}_0) \norm{t^{-\alpha{q}}}_{L^{\frac{1}{\alpha q}-r}(0,T;\mathbb{R})} , \nn
	\end{align}
	for all $n\in \mathbb{N}$, $n\ge 1$. Thus $\norm{u - \widetilde{u}}_{L^{\frac{1}{\alpha q}-r}(0,T;L^2(D))}=0$ by letting $n$ go to infinity. Hence, $u = \widetilde{u}$ in $L^{\frac{1}{\alpha q}-r}(0,T;L^2(D))$.

	\noindent {\bf Step 2:} We prove that $u\in L^{\frac{1}{\alpha q'}-r}(0,T;{\bf V}_{\beta({p}- p' )}).$ This will be proved by using the inequality (\ref{uboundW}). We now apply the  same arguments as in the  proofs of (\ref{Theo2O1}), and (\ref{Theo2O3}) to estimate $\norm{u(t,.)}_{{\bf V}_{\beta({p}- p' )}}$ as follows. First, we have
	\begin{align}
	\,& \norm{\mathcal O_1 (t,.) F(u)}_{{\bf V}_{\beta({p}- p' )}} \nn\\
	\le\,& \widehat{c}_\alpha \int_0^t \left\{\sum_{j=1}^\infty F_j^2(\tau,u(\tau)) m_j^{2\beta({p}- p' )} m_j^{-2\beta{p}} (t-\tau)^{-2\alpha{p}} (t-\tau)^{2\alpha-2} \right\}^{1/2} d\tau \nn\\
	\le\,& \widehat{c}_\alpha m_1^{-\beta p' } \int_0^t \norm{F(\tau,.,u(\tau,.))} (t-\tau)^{\alpha{q}-1} d\tau  \nn\\
	\le\,& \widehat{c}_\alpha m_1^{-\beta p' } K \int_0^t \norm{u(\tau,.)} (t-\tau)^{\alpha{q}-1} d\tau  \nn\\
	\le\,& \widehat{c}_\alpha m_1^{-\beta p' }K \widehat{C}_0   \int_0^t \tau^{-\alpha q} (t-\tau)^{\alpha{q}-1} d\tau \le M_{29} \norm{\varphi}_{{\bf V}_{\beta{p}}}  t^{-\alpha q' } ,  \label{O1Fu}
	\end{align}
	where we let $$M_{29}=\widehat{c}_\alpha m_1^{-\beta p' }K \widetilde{C}_0 B(\alpha q, 1-\alpha q) T^{\alpha q'} .$$  Secondly,
	\begin{align}
	 \norm{\mathcal O_3(t,.)F(u)}_{{\bf V}_{\beta({p}- p' )}}  
	\le\,& M_6 t^{-\alpha q' } \int_0^T  \norm{F(\tau,.,u(\tau,.))} (T-\tau)^{\alpha{q}-1} d\tau \nn\\
	\le\,& M_6 t^{-\alpha q' } K \int_0^T \norm{u(\tau,.)} (T-\tau)^{\alpha{q}-1} d\tau  \nn\\
	\le\,& M_6 t^{-\alpha q' } K \widehat{C}_0   \int_0^t \tau^{-\alpha q} (t-\tau)^{\alpha{q}-1} d\tau\nn\\
	& \le M_{30} \norm{\varphi}_{{\bf V}_{\beta{p}}} t^{-\alpha q' } , \hspace*{1.9cm}  \label{O3Fu}
	\end{align}
	where we let $M_{30}= M_6 K \widetilde{C}_0 B(\alpha q, 1-\alpha q)$. We recall that  $\norm{\mathcal O_2(t,.)\varphi}_{{\bf V}_{\beta({p}- p' )}}$ have been estimated by (\ref{Theo2O2}). According to the above arguments, we arrive at the estimate
	\begin{align}
	\norm{u(t,.)}_{{\bf V}_{\beta({p}- p' )}}
	\le \,& \norm{\mathcal O_1(t,.)F(u)}_{{\bf V}_{\beta({p}- p' )}} + \norm{\mathcal O_2(t,.)\varphi}_{{\bf V}_{\beta({p}- p' )}}  + \norm{\mathcal O_3(t,.)F(u)}_{{\bf V}_{\beta({p}- p' )}} \hspace*{-0.8cm} \nn\\
	\le \,& M_{31} \norm{\varphi}_{{\bf V}_{\beta{p}}} t^{-\alpha q' } , \nn
	\end{align}
	for $$M_{31}=M_{29} + M_5 + M_{30}.$$ By taking the $L^{\frac{1}{\alpha q'}-r}(0,T;\mathbb{R})$-norm, then the latter inequalities can be transformed into the following estimate
	\begin{align}
	\norm{u}_{L^{\frac{1}{\alpha q'}-r}(0,T;{\bf V}_{\beta({p}- p' )})}  \le \,& M_{32} \norm{\varphi}_{{\bf V}_{\beta{p}}}  .\label{trenlau1}
	\end{align}
	where $M_{32}=M_{31}\norm{t^{-\alpha q'}}_{L^{\frac{1}{\alpha q'}-r}(0,T;\mathbb{R})}.$
	\noindent {\bf Step 3:} We prove that $u\in C^{\alpha q}((0,T];L^2(D)).$ Let us consider   $0<t_1<t_2\le T$. By the same arguments as in (\ref{ConFor}), we have
	\begin{align}
	\,& u(t_2,x)-u(t_1,x)\nn\\
	=\,&   \sum_{j=1}^\infty \int_0^{t_1} \int_{t_1-\tau}^{t_2-\tau}  F_j(\tau,u(\tau))\omega^{\alpha -2}E_{\alpha ,\alpha -1}(-m_j^\beta \omega^\alpha ) d\omega d\tau e_j(x) \nn\\
	+\,& \sum_{j=1}^\infty\int_{t_1}^{t_2}F_j(\tau,u(\tau)) \widetilde{E}_{\alpha,\alpha}(-m_j^\beta (t_2-\tau)^\alpha) d\tau e_j(x) \nn\\
	- \,& \mathcal L^{\beta} \sum_{j=1}^\infty \varphi_j   \int_{t_1}^{t_2}  \frac{\widetilde{E}_{\alpha ,\alpha  }(-m_j^\beta  \omega^\alpha )}{E_{\alpha ,1}(-m_j^\beta  T^\alpha )} d\omega e_j(x)  \nn\\
	+ \,& \mathcal L^{\beta} \sum_{j=1}^\infty   \int_0^T  \int_{t_1}^{t_2}   F_j(\tau,u(\tau)) \widetilde{E}_{\alpha,\alpha}(-m_j^\beta (T-\tau)^\alpha)  \frac{\widetilde{E}_{\alpha ,\alpha  }(-m_j^\beta  \omega^\alpha )}{E_{\alpha ,1}(-m_j^\beta  T^\alpha )} d\omega d\tau e_j(x)\nn\\
	:=\,& \mathcal I^N_1+\mathcal I^N_2 + \mathcal I_3  + \mathcal I^N_4. \label{INn}
	\end{align}
	Here,  by (\ref{I3}), $\norm{\mathcal I_3}$ tends to $0$ as $t_2-t_1$ tends to $0$. In what follows, we will establish the convergence for $\norm{\mathcal I^N_n}$, $n=1,2,4$ which can be treated similarly as in (\ref{I1}), (\ref{I2}), (\ref{I4}) based on the Lipschitzian assumption (A1). We first see that
	\begin{align}
	\norm{\mathcal I^N_1}
	\le \,& \widehat{c}_\alpha m_1^{-\beta{p}} \int_0^{t_1} \left\{ \sum_{j=1}^\infty F_j^2(\tau,u(\tau)) \left| \int_{t_1-\tau}^{t_2-\tau}  \omega^{\alpha -2}  \omega^{-\alpha {p}}   d\omega \right|^2 \right\}^{1/2}    d\tau \nn\\
	\le \,& \frac{\widehat{c}_\alpha m_1^{-\beta{p}}}{1-\alpha q}  \int_0^{t_1} \norm{F(\tau,.,u(\tau,.))}  \Big[ (t_1-\tau)^{\alpha q -1} - (t_2-\tau)^{\alpha q-1}\Big]  d\tau \nn \\
	\le \,& \frac{\widehat{c}_\alpha m_1^{-\beta{p}}}{1-\alpha q} K \int_0^{t_1} \norm{u(\tau,.)}  \Big[ (t_1-\tau)^{\alpha q -1} - (t_2-\tau)^{\alpha q-1}\Big]  d\tau \nn \\
	\le \,& \frac{\widehat{c}_\alpha m_1^{-\beta{p}}}{1-\alpha q} K \widetilde{C}_0  \norm{\varphi}_{{\bf V}_{\beta{p}}}   \int_0^{t_1} \tau^{-\alpha q} \Big[ (t_1-\tau)^{\alpha q -1} - (t_2-\tau)^{\alpha q-1}\Big]  d\tau. \label{IN1}
	\end{align}
	Note that $$\displaystyle \int_0^{t_1} \tau^{-\alpha q} (t_1-\tau)^{\alpha q-1} d\tau = B(\alpha q, 1-\alpha q).$$ Thus, due to the substitution $\tau=t_2 \mu $, we have
	\begin{align}
	&\int_0^{t_1} \tau^{-\alpha q} \Big[ (t_1-\tau)^{\alpha q-1} - (t_2-\tau)^{\alpha q-1}\Big] d\tau \nn\\
	&\quad \quad \quad = B(\alpha q, 1-\alpha q) - \int_0^{t_1/t_2} \mu^{-\alpha q} (1-\mu)^{\alpha q-1} d\mu  \nn\\
	&\quad \quad \quad= B(\alpha q, 1-\alpha q) - \left[ B(\alpha q, 1-\alpha q) - \int_{t_1/t_2}^1  \mu^{-\alpha q} (1-\mu)^{\alpha q-1} d\mu \right] \hspace*{1.55cm} \nn\\
	&\quad \quad \quad \le  \left(\frac{t_2}{t_1}\right)^{\alpha q} \int_{t_1/t_2}^1   (1-\mu)^{\alpha q-1} d\mu = \frac{1}{\alpha q} \left(\frac{t_2}{t_1}-1 \right)^{\alpha q}.\label{CalBeta}
	\end{align}
	As a consequence,  $\displaystyle \lim\limits_{t_2-t_1\to 0} \int_0^{t_1} \tau^{-\alpha q} \Big[ (t_1-\tau)^{\alpha q-1} - (t_2-\tau)^{\alpha q-1}\Big] d\tau = 0$, and so
	$\lim\limits_{t_2-t_1\to 0} \norm{\mathcal I^N_1} = 0$.
	Secondly we proceed to deal with $\mathcal I^N_2$. Now  $0<p_0<p$, by (\ref{BasicMLInequality}), we have 
	\begin{equation}
	|E_{\alpha,\alpha}(-m_j^\beta (t_2-\tau)^\alpha)|\le \widehat{c}_\alpha m_1^{-\beta(p-p_0 )} (t_2-\tau)^{-\alpha(p-p_0 )}.
	\end{equation}
	We deduce the following chain of estimates
	\begin{align}
	\norm{\mathcal I^N_2}
	\le\,& \int_{t_1}^{t_2}\norm{\sum_{j=1}^\infty F_j(\tau,u(\tau)) E_{\alpha,\alpha}(-m_j^\beta (t_2-\tau)^\alpha)  e_j} (t_2-\tau)^{\alpha-1} d\tau \nn \\
	\le\,& \widehat{c}_\alpha m_1^{-\beta(p-p_0 )} \int_{t_1}^{t_2}\norm{F(\tau,.,u(\tau,.))} (t_2-\tau)^{\alpha q -1+\alpha p_0 } d\tau \nn \\
	\le\,& \widehat{c}_\alpha m_1^{-\beta(p-p_0 )} K  \int_{t_1}^{t_2}\norm{u(\tau,.)} (t_2-\tau)^{\alpha q -1 } d\tau (t_2-t_1)^{\alpha p_0 } \nn\\
	\le\,& \widehat{c}_\alpha m_1^{-\beta(p-p_0 )} K \widetilde{C}_0  \norm{\varphi}_{{\bf V}_{\beta{p}}} \int_{0}^{t_2} \tau^{\alpha q -1} (t_2-\tau)^{\alpha q -1 } d\tau (t_2-t_1)^{\alpha p_0 } \nn  \\
	\le\,& \widehat{c}_\alpha m_1^{-\beta(p-p_0 )} K \widetilde{C}_0  \norm{\varphi}_{{\bf V}_{\beta{p}}} B(\alpha q, 1- \alpha q) (t_2-t_1)^{\alpha p_0 }.  \label{IN2}
	\end{align}
	This implies $\lim\limits_{t_2-t_1\to 0} \norm{\mathcal I^N_2} = 0.$
	Next, we thirdly proceed to consider $\mathcal I^N_4$. The same argument as in (\ref{I4}) gives
	\begin{align}
	\norm{\mathcal I^N_4}
	\le\,& \widehat{c}_\alpha M_{11} \frac{t_2^{\alpha {q}} - t_1^{\alpha {q}}}{t_1^{2\alpha {q}}} \int_0^T  \norm{F(\tau,.,u(\tau,.))}   (T-\tau)^{\alpha{q}-1}   d\tau  \nn\\
	\le\,& \widehat{c}_\alpha M_{11} K \widetilde{C}_0  \norm{\varphi}_{{\bf V}_{\beta{p}}} \frac{t_2^{\alpha {q}} - t_1^{\alpha {q}}}{t_1^{2\alpha {q}}} \int_0^T  \tau^{-\alpha q}   (T-\tau)^{\alpha{q}-1}   d\tau \hspace*{0.2cm} \nn\\
	\le\,& \widehat{c}_\alpha M_{11} K \widetilde{C}_0 B(\alpha q, 1-\alpha q) \norm{\varphi}_{{\bf V}_{\beta{p}}} \frac{t_2^{\alpha {q}} - t_1^{\alpha {q}}}{t_1^{2\alpha {q}}} , \label{IN4}
	\end{align}
	and we arrive at $\lim\limits_{t_2-t_1\to 0} \norm{\mathcal I^N_4} = 0.$ The above arguments prove $u\in C((0,T];L^2(D))$. This combines with (\ref{uboundW}) so that $u\in C^{\alpha q}((0,T];L^2(D))$, and
	\begin{align}
	\norm{u}_{C^{\alpha q}((0,T];L^2(D))}    \le  \widetilde{C}_0   \norm{\varphi}_{{\bf V}_{\beta{p}}} .\label{trenlau2}
	\end{align}
	We complete Step 1 by combining  the inequalities (\ref{trenlau1}) and (\ref{trenlau2}).
	
	\vspace*{0.2cm}	
	
	\noindent b) According to Part a), we have just to prove that  $u \in  C^{\alpha (p-p')}([0,T];{\bf V}_{-\beta q'}).$  In this part, we consider  $0\le t_1<t_2\le T$.  Let us first show
	\begin{align}
	\norm{\mathcal I^N_1}_{{\bf V}_{-\beta q'}} \le M_{33} \norm{\varphi}_{{\bf V}_{\beta{p}}} (t_2-t_1)^{\alpha q}, \label{IN1closed}
	\end{align}
	for some positive constant $M_{33}$, where the case $t_1=0$ is trivial. It is necessary to prove (\ref{IN1closed}) for $t_1>0$. From the proof of the estimate (\ref{I1aaa}), we have
	\begin{align}
	\norm{\mathcal I^N_1}_{{\bf V}_{-\beta {q}'}}
	\le \,& \int_0^{t_1} \left\{ \sum_{j=1}^\infty m_j^{ -2\beta {q}' } F_j^2(\tau,u(\tau)) \left| \int_{t_1-\tau}^{t_2-\tau}  \omega^{\alpha -2} \left| E_{\alpha ,\alpha -1}(-m_j^\beta \omega^\alpha )\right| d\omega \right|^2 \right\}^{1/2}    d\tau . \nn
	\end{align}
	In addition, the inequalities (\ref{BasicMLInequality}) yield that, $|E_{\alpha,\alpha-1}(-m_j^\beta \omega^\alpha)| \le  \widehat{c}_\alpha m_j^{-\beta p'} \omega^{-\alpha p' }$. This associates with $\alpha q' -2=\alpha q -2 + \alpha (p-p')$ so that
	\begin{align}
	\norm{\mathcal I^N_1}_{{\bf V}_{-\beta {q}'}}
	\le \,& \widehat{c}_\alpha m_1^{ - \beta } \int_0^{t_1} \norm{F(\tau,.,u(\tau,.))}   \left| \int_{t_1-\tau}^{t_2-\tau}  \omega^{\alpha q -2 + \alpha (p-p')} d\omega \right|  d\tau   \nn\\
	\le \,& \frac{\widehat{c}_\alpha m_1^{ - \beta }}{1-\alpha q'}  K  \int_0^{t_1} \norm{ u(\tau,.) }   \left[(t_1-\tau)^{\alpha q -1 + \alpha (p-p')} -  (t_2-\tau)^{\alpha q -1 + \alpha (p-p')} \right]  d\tau   \nn \\
	\le \,& \frac{\widehat{c}_\alpha m_1^{ - \beta }}{1-\alpha q'}  K \widehat{C}_0     \int_0^{t_1} \tau^{-\alpha q} (t_1-\tau)^{\alpha (p-p')}\left[(t_1-\tau)^{\alpha q -1} -  (t_2-\tau)^{\alpha q -1} \right]  d\tau  \nn \\
	\le \,& \frac{\widehat{c}_\alpha m_1^{ - \beta }}{1-\alpha q'}  K \widehat{C}_0    t_1^{\alpha (p-p')}  \frac{1}{\alpha q} \left(\frac{t_2}{t_1}-1 \right)^{\alpha q} \le  M_{33}  \norm{\varphi}_{{\bf V}_{\beta{p}}} (t_2-t_1)^{\alpha q}, \nn
	\end{align}
	where we let $$M_{33}= \frac{\widehat{c}_\alpha m_1^{ - \beta}}{\left(1-\alpha q'\right)\alpha q } K \widetilde{C}_0 T^{\alpha(p-p')-\alpha q}.$$
	 Here, we have used (\ref{ConFor}), (\ref{CalBeta}), and $\alpha(p-p')\ge \alpha q$ by (R4b). Secondly, we are going to consider $\mathcal I^N_2$. The Sobolev embedding $L^2(D) \hookrightarrow {\bf V}_{-\beta {q}'}$ yields that there exists a positive constant $M_{34}$ such that
	\begin{align}
	\norm{\mathcal I^N_2}_{{\bf V}_{-\beta {q}'}}
	\le \,& M_{34} \norm{\mathcal I^N_2}  \nn\\
	\le\,& M_{34} \widehat{c}_\alpha m_1^{-\beta p'} K \widetilde{C}_0   B(\alpha q, 1- \alpha q) \norm{\varphi}_{{\bf V}_{\beta{p}}} (t_2-t_1)^{\alpha (p-p') }    \nn\\
	\le\,& M_{34} \widehat{c}_\alpha m_1^{-\beta p'} T^{\alpha(p-p')-\alpha q} K \widetilde{C}_0   B(\alpha q, 1- \alpha q) \norm{\varphi}_{{\bf V}_{\beta{p}}} (t_2-t_1)^{\alpha q } ,   \nn
	\end{align}
	where we applied (\ref{IN2}) with respect to $0<p_0=p-p'<p$. Thirdly, we now consider $\mathcal I^N_3$. By applying the same arguments as in (\ref{I3COT}), one can get
	\begin{align}
	\norm{\mathcal I^N_3}_{{\bf V}_{-\beta {q}'}}
	\le\,&   \frac{M_{14}}{\alpha({p}-{p}')}    \norm{\varphi}_{{\bf V}_{\beta p}} \left(t_2^{\alpha(p-p')}-t_1^{\alpha(p-p')} \right) \nn\\
	\le\,& \frac{M_{14}}{\alpha({p}-{p}')} T^{\alpha(p-p')-\alpha q} \norm{\varphi}_{{\bf V}_{\beta p}} \left(t_2-t_1\right) ^{\alpha q}   . \hspace*{1cm}
	\end{align}
	Finally, the arguments  proving (\ref{I4COT}) give
	\begin{align}
	\norm{\mathcal I_4}_{{\bf V}_{-\beta {q}'}}
	\le\,& \widehat{c}_\alpha \frac{M_{14}}{\alpha({p}-{p}')}  \left(t_2^{\alpha(p-p')}-t_1^{\alpha(p-p')} \right) \int_0^T  \norm{F(\tau,.,u(\tau,.))}   (T-\tau)^{\alpha{q}-1}   d\tau   \nn\\
	\le\,& \widehat{c}_\alpha \frac{M_{14}}{\alpha({p}-{p}')}  \left(t_2-t_1\right)^{\alpha(p-p')} K \int_0^T  \norm{u(\tau,.)}   (T-\tau)^{\alpha{q}-1}   d\tau  \nn\\
	\le\,& \widehat{c}_\alpha \frac{M_{14}}{\alpha({p}-{p}')}   T^{\alpha(p-p')-\alpha q} K \widetilde{C}_0    B(\alpha q, 1-\alpha q) \norm{\varphi}_{{\bf V}_{\beta{p}}} \left(t_2-t_1\right)^{\alpha(p-p')} .\nn
	\end{align}
	Taking the above estimates for $\mathcal I^N_n$, $1\le n\le 4$ together, we conclude that $u$ belongs to $C^{\alpha q}([0,T];{\bf V}_{-\beta q'})$. Moreover, there exists a positive constant $M_{35}$ such that
	\begin{align}
	\vertiii{u}_{C^{\alpha q}([0,T];{\bf V}_{-\beta q'})} \le M_{35} \norm{\varphi}_{{\bf V}_{\beta{p}}}. \nn
	\end{align}
	By combining this inequality with (\ref{trenlau1}), (\ref{trenlau2}), we complete the proof.
\end{proof}

\begin{theorem} \label{STheoN2} Let $p,q,p',q',\widehat{p},\widehat{q},r,\widehat{r}$ be defined by (R1), (R4b), (R5b).  If $\varphi$ belongs to  ${\bf V}_{\beta({p}+\widehat{q})}$, $F$ satisfies the assumptions  (A2), and  $k_0(T)<1$, then FVP (\ref{mainpro1})-(\ref{mainpro3}) has a unique solution $u$ satisfying that
	\begin{align}
	\,& u \in    L^{\frac{1}{\alpha q'}-r}(0,T;{\bf V}_{\beta({p}- p' )}) \cap C^{\alpha q}((0,T];L^2(D)), \nn \\
	\,& {}^{c}D_t^\alpha u \in L^{\frac{1}{\alpha \widehat{q}}-\widehat{r}}(0,T;{\bf V}_{-\beta(q+\widehat{p}) }) \cap C^\alpha((0,T];{\bf V}_{-\beta {q}}). \nn
	\end{align}
	Moreover, there exists a constant $C_7>0$ such that
	\begin{align}
	\norm{ {}^{c}D_t^\alpha u}_{L^{\frac{1}{\alpha }-\widehat{r}}(0,T;{\bf V}_{-\beta(q-\widehat{q}) })} + \norm{ {}^{c}D_t^\alpha u(t,.)}_{C^\alpha((0,T];{\bf V}_{-\beta q })}  \le C_7 \norm{\varphi}_{{\bf V}_{\beta (p+\widehat{q})}}. \label{kqqqq}
	\end{align}
\end{theorem}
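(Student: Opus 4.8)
The plan is to obtain existence, uniqueness and the spatial regularity of $u$ from Theorem~\ref{STheoN1}, and then to differentiate the spectral representation term by term following the scheme of Theorem~\ref{STheo2}, taking care that the nonlinear source now carries its own temporal singularity. First I would note that (A2) implies the Lipschitz bound (A1), and that the Sobolev embedding ${\bf V}_{\beta(p+\widehat q)}\hookrightarrow{\bf V}_{\beta p}$ holds because $\widehat q\ge0$; since $k_0(T)<1$, the hypotheses of Theorem~\ref{STheoN1}~a) are satisfied and yield a unique mild solution with $u\in L^{\frac{1}{\alpha q'}-r}(0,T;{\bf V}_{\beta(p-p')})\cap C^{\alpha q}((0,T];L^2(D))$, together with the pointwise estimate (\ref{uboundW}), $\norm{u(t,.)}\le\widetilde C_0\norm{\varphi}_{{\bf V}_{\beta p}}\,t^{-\alpha q}$. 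Combined with (A1) this gives $\norm{F(t,.,u(t,.))}\le K\widetilde C_0\norm{\varphi}_{{\bf V}_{\beta p}}\,t^{-\alpha q}$, which is the only information about the source used below.

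Next I would differentiate (\ref{uj}) by means of the identities ${}^{c}D_t^\alpha E_{\alpha,1}(-m_j^\beta t^\alpha)=-m_j^\beta E_{\alpha,1}(-m_j^\beta t^\alpha)$ and ${}^{c}D_t^\alpha\widetilde E_{\alpha,\alpha}(-m_j^\beta t^\alpha)=-m_j^\beta\widetilde E_{\alpha,\alpha}(-m_j^\beta t^\alpha)$, exactly as in Theorem~\ref{STheo2}, to get ${}^{c}D_t^\alpha u_j(t)=F_j(t,u(t))+\psi^{(1)}_j(t)+\psi^{(2)}_j(t)+\psi^{(3)}_j(t)$ with the same three terms, but with $F_j(\tau)$ replaced by $F_j(\tau,u(\tau))$. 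The essential difference from the linear theory is that $F(\cdot,u(\cdot))$ is \emph{not} in $\mathcal X_{2,\alpha(q-\widehat q)}(J\times D)$ when $\widehat q>0$: with $\norm{F(\tau,.,u(\tau,.))}\lesssim\tau^{-\alpha q}$ its defining convolution behaves like $t^{-\alpha\widehat q}$ near $t=0$, so one cannot pull the factor $t^{-\alpha}$ out of (\ref{psi1}) and close with $\vertiii{F}_{\mathcal X_{2,\alpha(q-\widehat q)}}$ as in Theorem~\ref{STheo2}. Instead I would keep the convolution and evaluate it directly: substituting the bound on $\norm{F}$ into the analogues of (\ref{psi1}) and (\ref{psi3}) produces Beta integrals $\int_0^t\tau^{-\alpha q}(t-\tau)^{\alpha(q-\widehat q)-1}d\tau=t^{-\alpha\widehat q}B(\alpha(q-\widehat q),1-\alpha q)$ and $\int_0^T\tau^{-\alpha q}(T-\tau)^{\alpha(q-\widehat q)-1}d\tau$, both finite because $\alpha q<1$ and $\widehat q<q$, while all powers of $m_j$ cancel thanks to $p+q=1$.

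Collecting these with the $t^{-\alpha}$ contributions coming from (\ref{psi2}) and (\ref{psi3}) (and the $t^{-\alpha q}$ from the $F_j(t,u(t))$ term, which is dominated by $t^{-\alpha}$ on $(0,T]$), I would arrive at $\norm{{}^{c}D_t^\alpha u(t,.)}_{{\bf V}_{-\beta(q-\widehat q)}}\le C\norm{\varphi}_{{\bf V}_{\beta(p+\widehat q)}}\,t^{-\alpha}$ for $0<t\le T$. Since $0<\widehat r\le\frac{1-\alpha}{\alpha}$ ensures both $\frac{1}{\alpha}-\widehat r\ge1$ and $t^{-\alpha}\in L^{\frac{1}{\alpha}-\widehat r}(0,T;\mathbb R)$, taking the time norm gives the first half of (\ref{kqqqq}).

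For the regularity ${}^{c}D_t^\alpha u\in C^{\alpha}((0,T];{\bf V}_{-\beta q})$ I would follow Theorem~\ref{STheo2}~b). The weighted bound $\sup_{0<t\le T}t^\alpha\norm{{}^{c}D_t^\alpha u(t,.)}_{{\bf V}_{-\beta q}}<\infty$ is the previous estimate specialised to $\widehat q=0$, noting that $t^\alpha\norm{F(t,.,u(t,.))}_{{\bf V}_{-\beta q}}\lesssim t^{\alpha p}\norm{\varphi}_{{\bf V}_{\beta p}}$ stays bounded. For continuity on $(0,T]$ I would write ${}^{c}D_t^\alpha u(t_2,\cdot)-{}^{c}D_t^\alpha u(t_1,\cdot)=\big[F(t_2,.,u(t_2,.))-F(t_1,.,u(t_1,.))\big]-\mathcal L^\beta\big(u(t_2,\cdot)-u(t_1,\cdot)\big)$ and expand $u(t_2,\cdot)-u(t_1,\cdot)$ by (\ref{INn}). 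The bracketed difference tends to $0$ by (A2), the continuity of $u$ from Theorem~\ref{STheoN1}, and $L^2(D)\hookrightarrow{\bf V}_{-\beta q}$; each $\mathcal L^\beta\mathcal I^N_n\to0$ in ${\bf V}_{-\beta q}$ by repeating the estimates (\ref{J1})--(\ref{J4}), with $\mathcal L^\beta\mathcal I_3$ identical to the linear case since it involves only $\varphi$, where the extra $m_j^\beta$ is absorbed against $m_j^{-\beta q}$ and the Mittag--Leffler decay via $p+q=1$, and the $\tau$-integrals are controlled using $\norm{F(\tau,.,u(\tau,.))}\lesssim\tau^{-\alpha q}$ and (\ref{CalBeta}). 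Collecting constants gives the second half of (\ref{kqqqq}). The main obstacle throughout is exactly this loss of $\mathcal X_{2,\alpha(q-\widehat q)}$-membership of the source: every estimate borrowed from the linear theory must be re-derived with a genuinely singular integrand, checking that the Beta integrals converge and that the dominant $t^{-\alpha}$ singularity stays integrable in the prescribed exponent.
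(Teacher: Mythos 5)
Your proposal is correct and follows essentially the same route as the paper: reduce to Theorem \ref{STheoN1} for existence, uniqueness and the bound $\norm{u(t,.)}\lesssim t^{-\alpha q}$, differentiate the spectral series via the Mittag--Leffler identities into $F_j(t,u(t))+\psi_j^{N,1}+\psi_j^{N,2}+\psi_j^{N,3}$, replace the $\mathcal X_{2,\alpha(q-\widehat q)}$-norm control of the linear case by direct evaluation of the Beta integrals $\int_0^t\tau^{-\alpha q}(t-\tau)^{\alpha(q-\widehat q)-1}d\tau$ to reach the $t^{-\alpha}$ rate, and prove H\"older continuity through the decomposition $F(t_2,\cdot,u(t_2,\cdot))-F(t_1,\cdot,u(t_1,\cdot))+\sum_n\mathcal L^\beta\mathcal I_n^N$ with (A2) and (\ref{CalBeta}). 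Your observation that the composed source loses membership in $\mathcal X_{2,\alpha(q-\widehat q)}$ for $\widehat q>0$, forcing the Beta-integral computation, is precisely the adjustment the paper itself makes (and your reading ``$\widehat q=0$'' for the weighted $C^\alpha$ bound is the intended one, despite the paper's typo ``$\widehat q=1$'').
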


\begin{proof} Since $F$ satisfies (A2), $F$ also satisfies (A1) with respect to the Lipschitz constant $K_*$. In addition, the Sobolev imbedding ${\bf V}_{\beta(p+\widehat{q})} \hookrightarrow {\bf V}_{\beta p}$ shows that $\varphi$ belongs to ${\bf V}_{\beta p}$. Hence, by Theorem \ref{STheoN1}, FVP (\ref{mainpro1})-(\ref{mainpro3}) has a unique solution
	$$u\in L^{\frac{1}{\alpha q'}-r}(0,T;{\bf V}_{\beta({p}- p' )}) \cap C^{\alpha q}((0,T];L^2(D)).$$
	Moreover, the inequality (\ref{uboundW}) also holds. We deduce that, for $0<t\le T$,
	\begin{align}
	\norm{F(t,.,u(t,.))} \le K_*\widetilde{C}_0\norm{\varphi}_{{\bf V}_{\beta p}} t^{-\alpha q} \le M_{36} K_*\widetilde{C}_0\norm{\varphi}_{{\bf V}_{\beta (p+\widehat{q})}} t^{-\alpha q} . \label{FL2bound}
	\end{align}
	The remainder of this proof falls naturally into two steps as follows. \vspace*{0.2cm}

	\noindent {\bf Step 1:} We prove ${}^{c}D_t^\alpha u$ finitely exists and belongs to $L^{\frac{1}{\alpha  }-\widehat{r}}(0,T;{\bf V}_{-\beta(q-\widehat{q}) })$. By the same way as in Part a of Theorem \ref{STheo2}, we have
	\begin{align}
	{}^{c}D_t^\alpha u_j(t)
	=\,& F_j(t,u(t))  -m_j^\beta F_j(t,u(t))\star \widetilde{E}_{\alpha,\alpha}(-m_j^\beta t^\alpha) \nn\\
	-\,&   \varphi_j  \frac{m_j^\beta E_{\alpha,1}(-m_j^\beta t^\alpha)}{E_{\alpha,1}(-m_j^\beta T^\alpha)} + F_j(T,u(T))\star \widetilde{E}_{\alpha,\alpha}(-m_j^\beta T^\alpha) \frac{m_j^\beta E_{\alpha,1}(-m_j^\beta t^\alpha)}{E_{\alpha,1}(-m_j^\beta T^\alpha)} \nn\\
	:=\,& F_j(t,u(t))+\psi_{j}^{N,1}(t)+\psi_j^{N,2}(t)+\psi_j^{N,3}(t).  \nn
	\end{align}
	for all $j\in \mathbb{N}$, $j\ge 1$. In view of (\ref{FL2bound}), $F(t,.,u(t,.))$ is contained in $L^2(D)$ for $0<t\le T$. This associates with the Sobolev embedding $L^2(D) \hookrightarrow {\bf V}_{-\beta(q-\widehat{q})}$ that $F(t,.,u(t,.))$ is contained in ${\bf V}_{-\beta(q-\widehat{q})}$, namely $\sum_{j=1}^\infty F_j(t,u(t)) e_j$ is contained in ${\bf V}_{-\beta(q-\widehat{q})}$. On the other hand, $\psi_j^{N,2}=\psi_j^{(2)}$, and the norm $\norm{\sum_{j=1}^\infty\psi_j^{N,2}(t)e_j}_{{\bf V}_{-\beta(q-\widehat{q})}}$ exists finitely by (\ref{psi2}). Now, we    consider $\norm{\sum_{j=1}^\infty\psi_j^{N,n}(t)e_j}_{{\bf V}_{-\beta(q-\widehat{q})}}$, $n=1,3$. According to the estimates (\ref{psi1}) and (\ref{psi3}), the following ones hold:
	\begin{align}
	\,&\norm{\sum_{n_1\le j\le n_2} \psi^{N,1}_j(t)e_j}_{{\bf V}_{-\beta(q-\widehat{q}) }}  \le \widehat{c}_\alpha  \int_0^t (t-\tau)^{\alpha(q-\widehat{q})-1} \left\{ \sum_{n_1\le j\le n_2}  F_j^2(\tau,u(\tau))   \right\}^{1/2} d\tau,   \nn \\
	\,&	\norm{\sum_{n_1\le j\le n_2} \psi^{N,3}_j(t)e_j}_{{\bf V}_{-\beta(q-\widehat{q}) }}
	\le \widehat{c}_\alpha M_{19} t^{-\alpha}  \int_0^T   (T-\tau)^{\alpha(q-\widehat{q}) - 1} \left\{ \sum_{n_1\le j\le n_2}  F_j^2(\tau,u(\tau)) \right\}^{1/2}  d\tau.  \nn
	\end{align}
	For $0<\tau<T$, we have $F(\tau,.,u(\tau,.))$ belonging to $L^2(D)$. This follows that the sequence $\big\{G_n(\tau) \big\}$, which is defined by  $G_n(\tau)=\left\{ \sum_{j\ge n}  F_j^2(\tau,u(\tau))   \right\}^{1/2}$,     converges pointwise to $0$ as $n$ goes to infinity. Moreover, by (\ref{FL2bound}), we have
	\begin{align}
	\left| (t-\tau)^{\alpha(q-\widehat{q})-1}G_n(\tau) \right|  \le\,&  M_{36} K_*\widetilde{C}_0\norm{\varphi}_{{\bf V}_{\beta (p+\widehat{q})}}   (t-\tau)^{\alpha(q-\widehat{q})-1} \tau^{-\alpha q}. \nn
	\end{align}
	The function $\tau \to   (t-\tau)^{\alpha(q-\widehat{q})-1} \tau^{-\alpha q}$  is integrable on the open interval $(0,t)$, $t>0$, since
	$$
	\int_0^t (t-\tau)^{\alpha(q-\widehat{q})-1} \tau^{-\alpha q} d\tau = t^{-\alpha \widehat{q}} B(\alpha(q-\widehat{q}),1-\alpha q).$$
	Therefore, the dominated convergence theorem yields that
	$$
	\lim\limits_{n \to \infty} \int_0^t (t-\tau)^{\alpha(q-\widehat{q})-1} G_n(\tau) d\tau = 0.$$
	This together with $\left\{ \sum_{n_1\le j\le n_2}  F_j^2(\tau,u(\tau))   \right\}^{1/2} \le G_n(\tau)$ gives
	\begin{align}
	\lim\limits_{n_1,n_2\to \infty} \int_0^t (t-\tau)^{\alpha(q-\widehat{q})-1} \left\{ \sum_{n_1\le j\le n_2}  F_j^2(\tau,u(\tau))   \right\}^{1/2} d\tau = 0.\nn
	\end{align}
	Similarly,  we also have
	\begin{align}
	\lim\limits_{n_1,n_2\to \infty} \int_0^T   (T-\tau)^{\alpha (q-\widehat{q}) - 1} \left\{ \sum_{n_1\le j\le n_2}  F_j^2(\tau,u(\tau)) \right\}^{1/2}  d\tau = 0.\nn
	\end{align}	
	We deduce   $\norm{\sum_{j=1}^\infty\psi_j^{N,n}(t)e_j}_{{\bf V}_{-\beta(q-\widehat{q})}}$, $n=1,3$ exist finitely. Taking all the above arguments together,
	we conclude that  $ \norm{\sum_{j=1}^\infty {}^{c}D_t^\alpha u_j(t)e_j}_{{\bf V}_{-\beta(q-\widehat{q})}} $ finitely exists. In addition, the Sobolev embedding $L^2(D) \hookrightarrow {\bf V}_{-\beta(q-\widehat{q})}$ yields that there exists a positive constant $M_{37}$ such that $$\norm{F(t,.,u(t,.))}_{{\bf V}_{-\beta(q-\widehat{q}) }} \le M_{37}\norm{F(t,.,u(t,.))}.$$ 
	Hence,
	\begin{align}
	\norm{ {}^{c}D_t^\alpha u(t,.)}_{{\bf V}_{-\beta(q-\widehat{q}) }}
	\le\,& \norm{F(t,.,u(t,.))}_{{\bf V}_{-\beta(q-\widehat{q}) }}  + \sum_{1\le n \le 3} \norm{\sum_{j=1}^\infty \psi_{j}^{N,n}(t)e_j}_{{\bf V}_{-\beta(q-\widehat{q}) }}   \nn\\
	\le\,& M_{37}\norm{F(t,.,u(t,.))}   +   \widehat{c}_\alpha  \int_0^t (t-\tau)^{\alpha(q-\widehat{q})-1} \norm{F(\tau,.,u(\tau,.))} d\tau \nn \\
	+\,& M_{19} t^{-\alpha } \norm{\varphi}_{{\bf V}_{\beta(p+\widehat{q})}} +  \widehat{c}_\alpha M_{19} t^{-\alpha }  \int_0^T   (T-\tau)^{\alpha (q-\widehat{q}) - 1} \norm{F(\tau,.,u(\tau,.))}  d\tau. \nn
	\end{align}
	We now note that
	$$    \int_0^t (t-\tau)^{\alpha(q-\widehat{q})-1} \tau^{-\alpha q} d\tau \le  T^{\alpha-\alpha \widehat{q}} B(\alpha(q-\widehat{q}),1-\alpha q) t^{-\alpha},$$ and $$    \int_0^T (T-\tau)^{\alpha(q-\widehat{q})-1} \tau^{-\alpha q} d\tau =  T^{-\alpha \widehat{q}} B(\alpha(q-\widehat{q}),1-\alpha q).$$ This combines with (\ref{FL2bound}) and there exists a constant $M_{38}>0$ such that
	\begin{align}
	\norm{ {}^{c}D_t^\alpha u(t,.)}_{{\bf V}_{-\beta(q-\widehat{q}) }} \le M_{38} \norm{\varphi}_{{\bf V}_{\beta (p+\widehat{q})}} t^{-\alpha}, \label{dauNbounded}
	\end{align}		
	which leads to
	\begin{align}
	\norm{ {}^{c}D_t^\alpha u}_{L^{\frac{1}{\alpha }-\widehat{r}}(0,T;{\bf V}_{-\beta(q-\widehat{q}) })}  \le  M_{38}   \norm{t^{-\alpha }}_{L^{\frac{1}{\alpha  }-\widehat{r}}(0,T;\mathbb{R})} \norm{\varphi}_{{\bf V}_{\beta (p+\widehat{q})}}  .\label{ganxong}
	\end{align}
	
	\vspace*{0.2cm}
	
	\noindent {\bf Step 2:} We prove ${}^{c}D_t^\alpha u \in C^\alpha((0,T];{\bf V}_{-\beta {q}})$.
	We consider $0<t_1<t_2\le T$. A similar argument as in  (\ref{J}) yields
	\begin{align}
	{}^{c}D_t^\alpha u(t_2,x)-{}^{c}D_t^\alpha u(t_1,x) = F(t_2,x,u(t_2,x))-F(t_1,x,u(t_1,x)) + \sum_{1\le n \le 4} \mathcal{J}_n^N,\nn
	\end{align}
	where $\mathcal J_n^N =\mathcal L^\beta \mathcal I_n^N$ and $\mathcal I_n^N$ is defined by (\ref{INn}). By applying the Sobolev embedding $L^2(D) \hookrightarrow {\bf V}_{-\beta {q}}$, there exists a positive constant $M_{39}$ such that
	\begin{align}
	& \lim\limits_{t_2-t_1 \to 0}\norm{F(t_2,.,u(t_2,.))-F(t_1,.,u(t_1,.))}_{{\bf V}_{-\beta {q}}} \nn\\
	&\quad \quad \quad \quad \quad \le \lim\limits_{t_2-t_1 \to 0}M_{39} \norm{F(t_2,.,u(t_2,.))-F(t_1,.,u(t_1,.))}  \nn\\
	&\quad \quad \quad \quad \quad  \le \lim\limits_{t_2-t_1 \to 0}M_{39} K_* \Big( |t_2-t_1| + \norm{u(t_2,.)-u(t_1,.)} \Big) =0, \hspace*{2.1cm} \nn
	\end{align}
	where we note that $u\in C^{\alpha q}((0,T];L^2(D))$. From (\ref{J1}) and (\ref{CalBeta}), we have
	\begin{align}
	\norm{\mathcal J_1^N}_{{\bf V}_{-\beta {q}}} \le \,& \widehat{c}_\alpha \int_0^{t_1} \norm{F(\tau,.,u(\tau,.))} \left| \int_{t_1-\tau}^{t_2-\tau}  \omega^{\alpha q -2}        d\omega \right|     d\tau  \nn\\
	\le \,& \frac{\widehat{c}_\alpha M_{36} K_*\widetilde{C}_0  }{1-\alpha q}  \norm{\varphi}_{{\bf V}_{\beta (p+\widehat{q})}} \int_0^{t_1} \tau^{-\alpha q} \left[ (t_1-\tau)^{\alpha q -1} - (t_2-\tau)^{\alpha q -1} \right]   d\tau  \nn  \\
	\le \,& \frac{\widehat{c}_\alpha M_{36} K_*\widetilde{C}_0  }{(1-\alpha q)\alpha q}  \norm{\varphi}_{{\bf V}_{\beta (p+\widehat{q})}} \left(\frac{t_2}{t_1}-1 \right)^{\alpha q}.\nn
	\end{align}
	In addition, by $$\int_{t_1}^{t_2} \tau^{-\alpha q} (t_2-\tau)^{\alpha q  -1} d\tau = \int_{t_1/t_2}^1 \mu ^{-\alpha q} (1-\mu )^{\alpha q -1} d\mu \le \frac{1}{\alpha q}\left(\frac{t_2}{t_1}-1\right)^{\alpha q}  $$
	 and   (\ref{FL2bound}), we can obtain the following chain of the inequalities
	\begin{align}
	\norm{\mathcal J^N_2}_{{\bf V}_{-\beta {q}}} \le \,& \widehat{c}_\alpha  \int_{t_1}^{t_2} \norm{F(\tau,.,u(\tau,.))} (t_2-\tau)^{\alpha q  -1} d\tau \hspace*{2.1cm} \nn\\
	\le\,&   \widehat{c}_\alpha M_{36} K_*\widetilde{C}_0  \norm{\varphi}_{{\bf V}_{\beta (p+\widehat{q})}} \int_{t_1}^{t_2} \tau^{-\alpha q} (t_2-\tau)^{\alpha q  -1} d\tau  \nn\\
	\le\,& \widehat{c}_\alpha M_{36} K_*\widetilde{C}_0  \norm{\varphi}_{{\bf V}_{\beta (p+\widehat{q})}} \frac{1}{\alpha q} t_1^{-\alpha q} \left(t_2-t_1 \right)^{\alpha q} . 
	\end{align}
	Finally, the norm $\norm{\mathcal J^N_3}_{V_{-\beta {q}}}$ has been estimated by (\ref{J3}), and the norm $\norm{\mathcal I^N_4}_{V_{-\beta {q}}}$
	can be estimated as follows:
	\begin{align}
	\norm{\mathcal J^N_4}_{{\bf V}_{-\beta {q}}}
	\le \,&  \frac{M_{24}}{\alpha} t_1^{-2\alpha} (t_2^\alpha -t_1^\alpha) \int_0^T \norm{F(\tau,.,u(\tau,.))} (T-\tau)^{ \alpha q -1 }  d\tau \nn\\
	\le \,&  \frac{M_{24}}{\alpha} t_1^{-2\alpha} (t_2^\alpha -t_1^\alpha) M_{36} K_*\widetilde{C}_0  \norm{\varphi}_{{\bf V}_{\beta (p+\widehat{q})}}  \int_0^T \tau^{-\alpha q} (T-\tau)^{ \alpha q -1 }  d\tau \hspace*{0.47cm} \nn\\
	\le \,&  \frac{M_{24}}{\alpha} t_1^{-2\alpha} (t_2^\alpha -t_1^\alpha) M_{36} K_*\widetilde{C}_0  \norm{\varphi}_{{\bf V}_{\beta (p+\widehat{q})}} B(\alpha q, 1-\alpha q)   .\nn
	\end{align}
	It follows from the above arguments that ${}^{c}D_t^\alpha u$ belongs to $C((0,T];{\bf V}_{-\beta {q}})$.  On the other hand, the estimate (\ref{dauNbounded}) also holds for $\widehat{p}=0$ and $\widehat{q}=1$, i.e., we have  $$t^{\alpha }\norm{ {}^{c}D_t^\alpha u(t,.)}_{{\bf V}_{-\beta q }}  \le  M_{38}  \norm{\varphi}_{{\bf V}_{\beta p}} ,$$ for $0<t\le T$. Therefore,  there exists a constant $M_{40}>0$ such that
	\begin{align}
	\norm{ {}^{c}D_t^\alpha u(t,.)}_{C^\alpha((0,T];{\bf V}_{-\beta q })}  \le  M_{40}  \norm{\varphi}_{{\bf V}_{\beta (p+\widehat{q})}} .\label{xongroi}
	\end{align}
	by the Sobolev embedding ${\bf V}_{\beta(p+\widehat{q})} \hookrightarrow {\bf V}_{\beta p}$.	The inequality (\ref{kqqqq}) is derived  by taking the inequality (\ref{ganxong}) and (\ref{xongroi}) together. We finally complete the proof.
\end{proof}


\begin{thebibliography}{par}
	
\bibitem {Au}	V.V. Au,  M. Kirane,  N.H. Tuan, \emph{ Determination of initial data for a reaction-diffusion system with variable coefficients} Discrete Contin. Dyn. Syst. 39 (2019), no. 2, 771--801.
	
\bibitem{Ca}	M. Allen,  L. Caffarelli,  A. Vasseur, \emph{ A parabolic problem with a fractional time derivative} Arch. Ration. Mech. Anal. 221 (2016), no. 2, 603--630. 
	
	\bibitem {warma} E. Alvarez, C.G. Gal, V.  Keyantuo, M. Warma;
	\emph {Well-posedness results for a class of semi-linear super-diffusive equations}, Nonlinear Analysis, 181 (2019), 24--61.
	
	\bibitem {Andrade} B. de Andrade, A.N. Carvalho, P.M. Carvalho-Neto, P. Marin-Rubio; \emph{ Semilinear fractional
		differential equations: global solutions, critical nonlinearities and comparison results },  Topological
	Methods in Nonlinear Analysis, 45 (2015), 439--467.
	
	\bibitem{Bar} B. Kaltenbacher, W. Rundell \emph{ On an inverse potential problem for a fractional reaction–diffusion equation   }, Inverse Problems, Volume 35, Number 6, 2019.
	
\bibitem{Bar1} B. Kaltenbacher, W. Rundell \emph{	Regularization of a backward parabolic equation by fractional operators}  Inverse Probl. Imaging 13 (2019), no. 2, 401--430
	
	\bibitem{Baumeister} J. Baumeister; \emph{Stable Solution of Inverse Problems}, Springer-Verlag, Mar 9, 1986.
	
\bibitem{Va3}	M. Bonforte,  Y. Sire,  J.L. Vazquez, \emph{ Optimal existence and uniqueness theory for the fractional heat equation} Nonlinear Anal. 153 (2017), 142--168

\bibitem{Va4} M. Bonforte, J. L. Vazquez \emph{ Quantitative Local and Global A Priori Estimates for Fractional
Nonlinear Diffusion Equations,}  Adv. Math. 250 (2014), 242--284
	
	\bibitem {Be} B. Berkowitz, J. Klafter, R. Metzler,   H. Scher; \emph{Physical pictures of transport in heterogeneous media: Advection-dispersion, random-walk, and fractional derivative formulations,}  Water
	Res. Research, 38 (2002), pp. 9-1-9-12.
	
\bibitem{Du0}	V. B\"ogelein,  F. Duzaar,  L. Sch\"atzler,  C. Scheven  \emph{Existence for evolutionary problems with linear growth by stability methods} J. Differential Equations 266 (2019), no. 11, 7709--7748.
	
	\bibitem {Du}  V. B\"ogelein,  F. Duzaar,  P. Marcellini,  C. Scheven, \emph{ Doubly nonlinear equations of porous medium type} Arch. Ration. Mech. Anal. 229 (2018), no. 2, 503--545.
	
	\bibitem {Du1}  V. B\"ogelein,  F. Duzaar,  P. Marcellini,  S. Signoriello, \emph{ Nonlocal diffusion equations} J. Math. Anal. Appl. 432 (2015), no. 1, 398--428. 
	
	\bibitem{Dong} H. Dong, D. Kim, \emph{ $L_p$-estimates for time fractional parabolic equations with coefficients measurable in time}  Adv. Math. 345 (2019), 289--345. 
	
\bibitem{Du2}	F. Duzaar,  J. Habermann, \emph{Partial regularity for parabolic systems with non-standard growth} J. Evol. Equ. 12 (2012), no. 1, 203--244.
	
	
	
	\bibitem{r1}
	L. Boyadjiev, Y. Luchko; \emph{Multi-dimensional a-fractional diffusion-wave equation and some properties of its fundamental solution}, Computers \& Mathematics with Applications, 73 (2017), 2561--2572.
	
	\bibitem{Brezis} H. Brezis; \emph{Functional analysis}, Springer, New York, 2011.
	
	\bibitem{Carasso} A. Carasso; \emph{Error Bounds in the Final Value Problem for the Heat Equation}, SIAM J. Math. Anal.,   7 (1976),  195--199.
	
	\bibitem {Clement} P. Clement, S-O. Londen,  G. Simonett; \emph{Quasilinear evolutionary equations and continuous
		interpolation spaces},  J. Differential Equations, 196 (2004), 418--447
	
	\bibitem{Courant} R. Courant, D. Hilbert; \emph{Methods of Mathematical Physics}, Vol. 1, Interscience, New York, 1953.
	
	\bibitem{Dang} D.T. Dang, E. Nane, D.M.  Nguyen, N.H. Tuan; \emph{Continuity of Solutions of a Class of Fractional Equations}, Potential Analysis,  49 (2018),
	423--478.
	
	\bibitem {4} D. del-Castillo-Negrete, B. A. Carreras,   V. E. Lynch; \emph{Nondiffusive transport in
		plasma turbulene: A fractional diffusion approach}, Phys. Rev. Lett., 94 (2005), 065003.
	
	\bibitem{Diethelm} K. Diethelm; \emph{The analysis of fractional differential equationst}, Springer, Berlin, 2010.
	
	\bibitem{r2} W. Fan, F. Liu, X. Jiang, I. Turner; \emph{A novel unstructured mesh finite element method for solving the time-space fractional wave equation on a two-dimensional irregular convex domain}, Fractional Calculus and Applied Analysis, 20 (2017), 352--383.
	
	\bibitem {Gal} C.G. Gal, M. Warma; \emph{Fractional in time semilinear parabolic equations and applications,} HAL Id:
	hal-01578788, 2017.
	
	\bibitem{Ga1}  Y. Giga, T. Namba; \emph{Well-posedness of Hamilton-Jacobi equations with Caputo's time fractional derivative}, Comm. Partial Differential Equations, 42 (2017),  1088--1120.
	
	\bibitem{r3}
	S. Guo, L. Mei, Y. Li; \emph{An efficient Galerkin spectral method for two-dimensional fractional nonlinear reaction-diffusion-wave equation}, Computers \& Mathematics with Applications, 74 (2017), 2449--2465.
	
\bibitem{Kim}	I. Kim, H.K. Kim,  S. Lim,  \emph{ An $L_q(L_p)$-theory for the time fractional evolution equations with variable coefficients}  Adv. Math. 306 (2017), 123--176
	
	\bibitem {Gu}	B.H. Guswanto, T. Suzuki; \emph{Existence and uniqueness of mild solutions for fractional semilinear
		differential equations} Electronic Journal of Diff. Equ., 2015 (2015), 16 pp.
	
\bibitem{Va}	N.Q. Hung, J.L.  Vazquez, \emph{ Porous medium equation with nonlocal pressure in a bounded domain}  Comm. Partial Differential Equations 43 (2018), no. 10, 1502--1539
	
	\bibitem {Hao} D.N. Hao, N.V.  Duc, N.V. Thang;  \emph{Backward semi-linear parabolic equations with time-dependent coefficients and local Lipschitz source},  Inverse Problems, 34 (2018),  33 pp.
	
\bibitem{Li}	L. Li,  G.L. Liu, \emph{ Some compactness criteria for weak solutions of time fractional PDEs}  SIAM J. Math. Anal. 50 (2018), no. 4, 3963--3995.
	
	\bibitem{Ja1} J. Janno, N. Kinash; \emph{Reconstruction of an order of derivative and a source term in a fractional diffusion equation from final measurements}, Inverse Problems, 34 (2018), 19 pp.
	
	\bibitem{Ja2} J. Janno, K. Kasemets; \emph{Uniqueness for an inverse problem for a semilinear time-fractional diffusion equation},  Inverse Probl. Imaging, 11 (2017), 125--149.
	
	\bibitem {Jia} J. Jia, J. Peng,  J. Gao,  Y. Li; \emph{ Backward problem for a time-space fractional diffusion equation}, Inverse Probl. Imaging, 12 (2018), 773--799.
	
	\bibitem {Ya5}  D. Jiang, Z. Li, Y. Liu, M. Yamamoto; \emph{Weak unique continuation property and a related inverse source problem for time-fractional diffusion-advection equations,}
	Inverse Problems, 33 (2017), 21 pp.
	
	\bibitem {jin2} B. Jin,  R. Lazarov, Y. Liu, Z. Zhou; \emph{ The Galerkin finite element method for a multi-term time-fractional diffusion equation},  J. Comput. Phys., 281 (2015), 825--843.
	
	\bibitem{jin} B. Jin,  B. Li,  Z. Zhou;  \emph{Numerical analysis of nonlinear subdiffusion equations
	}, SIAM J. Numer. Anal., 56 (2018),   1--23.
	
	\bibitem{Kato} T. Kato; \emph{Perturbation Theory for Linear Operators}, Springer-Verlag Berlin Heidelberg, 1995.
	
	\bibitem{Kian} Y. Kian, L. Oksanen,  E. Soccorsi, M.Yamamoto; \emph{Global uniqueness in an inverse problem for time fractional diffusion equations
	}, Journal of Differential Equations,
	264 (2018),  1146--1170.
	
	\bibitem{Yamamoto2} Y. Kian, M. Yamamoto;  \emph{On existence and uniqueness of solutions for semilinear fractional   wave   equations}, Fractional Calculus and Applied Analysis,   20 (2017), Pages 117--138.
	
	\bibitem {Kou} S. Kou; \emph{Stochastic modeling in nanoscale biophysics: Subdiffusion within proteins}, Ann. Appl. Stat.,
	2 (2008),  501--535.
	
	\bibitem{r4}
	D. Kumar, J. Singh, D. Baleanu; \emph{A new analysis for fractional model of regularized long-wave equation arising in ion acoustic plasma waves}, Mathematical Methods in the Applied Sciences, 40 (2017) 5642--5653.
	
\bibitem{Run}	W. Rundell, Z. Zhang, \emph {Recovering an unknown source in a fractional diffusion problem} J. Comput. Phys. 368 (2018), 299--314. 

\bibitem{Run1}	W. Rundell, Z. Zhang, \emph{Fractional diffusion: recovering the distributed fractional derivative from overposed data}  Inverse Problems 33 (2017), no. 3, 035008, 27 pp
	
	\bibitem{Lax} P.D. Lax; \emph{Functional analysis}, Wiley Interscience, New York, 2002.
	
	\bibitem {Liu} Z. Li,  Y. Liu,  M. Yamamoto;  \emph{ Initial-boundary value problems for multi-term time-fractional diffusion equations with positive constant coefficients}, Appl. Math. Comput., 257 (2015), 381--397.
	
	\bibitem {Ya6} {\color{black} Z. Li, O.Y. Imanuvilov, M. Yamamoto; \emph{Uniqueness in inverse boundary value
			problems for fractional diffusion equations,}
		Inverse Problems 32, (2016), 16 pp.}
	
	\bibitem {Ya10} {\color{black} G. Li, D. Zhang, X. Jia, M. Yamamoto; \emph{Simultaneous inversion for the space-dependent diffusion coefficient and the fractional order in the time-fractional diffusion equation,}
		Inverse Problems, 29 (2013), 36 pp.}
	
	\bibitem {Liu2} W. Liu,  M. Rockner,  da L.J. Silva; \emph{ Quasi-linear (stochastic) partial differential equations with time-fractional derivatives}, SIAM J. Math. Anal., 50 (2018),   2588--2607.
	
	\bibitem {Ya9} {\color{black} Y. Luchko, W. Rundell, M. Yamamoto, L. Zuo; \emph{Uniqueness and reconstruction of an unknown semilinear term in a time-fractional reaction-diffusion equation,}
		Inverse Problems, 29 (2013), 16 pp.}
	
	
	\bibitem{McLean}  W. McLean; \emph{Strongly Elliptic Systems and Boundary Integral Equations}, Cambridge University Press, Cambridge, 2000.
	
	\bibitem {Mclean2} W. McLean; \emph{Regularity of solutions to a time-fractional diffusion equation},  ANZIAM J., 52 (2010),   123--138.
	
	\bibitem {Ya8} {\color{black} L. Miller, M. Yamamoto; \emph{Coefficient inverse problem for a fractional diffusion equation,}
		Inverse Problems, 29 (2013), 8 pp. }
	
	\bibitem {Mu} J. Mu,  B. Ahmad,  S. Huang;  \emph{ Existence and regularity of solutions to time-fractional diffusion equations}  Comput. Math. Appl., 73 (2017), 985--996
	
	\bibitem {Ni} R.R. Nigmatullin; \emph{ The realization of the generalized transfer equation in a medium with fractal geometry,}
	Phys. Star. Sol. B, 133 (1986), 425--430.
	
	\bibitem{Nochetto} R. H. Nochetto, E. Ot\'arola, and A. J. Salgado; \emph{A PDE Approach to Space-Time Fractional  wave  Problems}, SIAM J. Numer. Anal., 54 (2016), 848--873.
	
	\bibitem {Salgado} E. Ot\'arola, A.J. Salgado;  \emph{Regularity of solutions to space-time fractional wave equations: a PDE approach,}  to appear Fractional Caculus and Applied Analysis.
	
	\bibitem{Podlubny} I. Podlubny; \emph{Fractional differential equations}, Academic Press, London, 1999.
	
	\bibitem{Yamamoto1} K. Sakamoto, M. Yamamoto;  \emph{Initial value/boudary value problems for fractional diffusion-  wave   equations and applications to some inverse problems}, J. Math. Anal. Appl.,  382 (2011), 426--447.
	
	\bibitem {Te} D. Stan,  F. del Teso, J.L. Vázquez, \emph{ Existence of Weak Solutions for a General Porous Medium Equation with Nonlocal Pressure} Arch. Ration. Mech. Anal. 233 (2019), no. 1, 451--496.
	
	\bibitem{Samko} S.G. Samko, Anatoly A. Kilbas, Oleg I. Marichev;
	\emph{Fractional integrals and derivatives}, Theory and Applications, Gordon and Breach Science, Naukai Tekhnika, Minsk (1987).
	
	\bibitem{Showater} R. E. Showalter; \emph{The final value problem for evolution equations}, Journal of Mathematical Analysis and Applications, 47 (1974), 563--572.
	
	\bibitem{Taylor} M. Taylor; \emph{Remarks on Fractional Difiusion Equations}, www.unc.edu/math/Faculty /met/fdif.pdf.
	
	\bibitem{Tuan1}  N.H. Tuan, V.A. Khoa, V.V. Au;  \emph{ Analysis of a Quasi-Reversibility Method for a Terminal Value Quasi-Linear Parabolic Problem with Measurements}, SIAM J. Math. Anal.,  51 (2019), 60--85.
	
	\bibitem{Tuan} N.H. Tuan, L.D. Long, N.V. Thinh, T. Tran; \emph{ On a final value problem for the time-fractional diffusion equation with inhomogeneous source }, Inverse Problem in Science and Engineering, 25 (2017), 1367--1395.
	
	
	\bibitem {32} G.M. Zaslavsky; \emph{Chaos, fractional kinetics, and anomalous transport,}  Phys. Rep., 371 (2002),
	461--580.
	
	\bibitem {Zhou} G.H. Zhou,  Z.B. Guo; \emph{ Boundary feedback stabilization for an unstable time fractional reaction diffusion equation},  SIAM J. Control Optim., 56 (2018),  75--10.
	
	\bibitem {Liu1} L. Wang,  J. Liu;  \emph{ Data regularization for a backward time-fractional diffusion problem}, Comput. Math. Appl., 64 (2012), 3613--3626.
	

	\bibitem {Ya7} {\color{black} W. Wang, M. Yamamoto, B. Han; \emph{Numerical method in reproducing kernel space for an inverse source problem for the fractional diffusion equation,}
		Inverse Problems, 29 (2013), 15 pp. }
	
	\bibitem{Wei} T. Wei, Y. Zhang; \emph{The backward problem for a time-fractional diffusion- wave   equation in a bounded domain}, Computers and Mathematics with Applications, 2018, available online.
	
	\bibitem{zhouyong} Y. Zhou; \emph{Basic theory of fractional differential equations}, World Scientific, Singapore, 2014.
	
	
\end{thebibliography}
\end{document}